\numberwithin{equation}{section}	
\begin{document}

\begin{frontmatter}

\title{Stacky dualities for the moduli of Higgs bundles.}

\author[mymainaddress,mysecondaryaddress]{Richard Derryberry}
\ead{rderryberry@perimeterinstitute.ca}

\address[mymainaddress]{Perimeter Institute for Theoretical Physics, 31 Caroline Street North, Waterloo, Ontario, Canada, N2L 2Y5}
\address[mysecondaryaddress]{Department of Mathematics, University of Toronto, Bahen Centre, 40 St.\ George St.\ Room 6290, Toronto, Ontario, Canada, M5S 2E5}

\begin{abstract}
The central result of this paper is an identification of the shifted Cartier dual of the moduli stack $\mathcal{M}_{\mathfrak{g}}(C)$ of $\widetilde{G}$-Higgs bundles on $C$ of arbitrary degree (modulo shifts by $Z(\widetilde{G})$) with a quotient of the Langlands dual stack $\mathcal{M}_{^L\mathfrak{g}}(C)$. Via hyperk\"ahler rotation, this may equivalently be viewed as the identification of an SYZ fibration relating Hitchin systems for arbitrary Langlands dual semisimple groups, coupled to nontrivial finite $B$-fields. As a corollary certain self-dual stacks $\frac{\mathcal{M}_{\mathfrak{g}}(C)}{\Gamma}$ are observed to exist, which I conjecture to be the Coulomb branches for the 3d reduction of the 4d $\mathcal{N}=2$ theories of class $\mathcal{S}$.
\end{abstract}

\begin{keyword}
geometric Langlands\sep mirror symmetry\sep Higgs bundles 
\MSC[2010] 14D24 (primary); 14D23, 14H40, 14H81 (secondary)
\end{keyword}

\end{frontmatter}

\setlength{\parindent}{0cm}
\setlength{\parskip}{0.3cm}


\pagestyle{myheadings}
\markright{ \hfill Stacky dualities for the moduli of Higgs bundles.\hfill}
\vspace{-0.8cm}

\tableofcontents


\section{Introduction}\label{s:intro}

The observation that careful analysis of dualities in physics regularly leads to the prediction of novel dualities in mathematics has been responsible for significant advances in modern geometry and representation theory.\footnote{For just the tip of the iceberg: Seiberg-Witten theory and Donaldson invariants \cite{W88,Don90,W94}; 3d $\mc{N}=4$ gauge theories and symplectic duality \cite{BLPW16-II,BDDH16,BFN16-II}; 4d $\mc{N}=4$ Yang-Mills theories and the geometric Langlands program \cite{KapWit07,GW08,DonPan12,EY15,BZN16}.} Over the past decade one particularly rich source of physical dualities have been the ``theories of class $\mc{S}$'' of Gaiotto, Moore and Neitzke \cite{GMN-WKB}. These 4d $\mc{N}=2$ superconformal quantum field theories are obtained from the superconformal 6d $\mc{N}=(2,0)$ theories $\mf{X}_{\mf{g}}$ \cite{Str96,W96} via compactification on a Riemann surface $C$, and are labelled by a choice of simply-laced Lie algebra $\mf{g}$: the resulting 4d theory will be denoted $\mc{S}_{\mf{g}}[C]$. It has previously been observed that these theories exhibit interesting dualities arising from the mapping class group of $C$ \cite{G12}, and that they form the four-dimensional part of the 4d-2d ``AGT correspondence'' \cite{AGT10}. This paper is motivated by a less well-studied self-duality arising from the geometry of the Coulomb branch.

The Coulomb branch of the 4d theory $\mc{S}_{\mf{g}}[C]$ is known to be the Hitchin base $H^0(C;(\mf{g}\git G)\times^{\mb{C}^\times}K_C^\times)$ \cite[\S3.1.2]{GMN-WKB}. General principles imply that the Coulomb branch of the 3d theory obtained via circle compactification is fibred over the 4d Coulomb branch, and by reversing the order in which one compactifies on $C$ and $S^1$ one can argue that the 3d Coulomb branch is given by the moduli space of $G$-Higgs bundles on $C$, $\Higgs_G(C)$ \cite[\S3.1.6]{GMN-WKB}.

It turns out, however, that this description of the 3d Coulomb branch is subtly incorrect. One hint in this direction is the fact that while $\Higgs_G(C)$ requires as data a choice of gauge group $G$, the theory $\mc{S}_{\mf{g}}[C]$ only requires the data of a Lie algebra. More significantly, careful analysis of $\mc{S}_{\mf{g}}[C]$ as a relative quantum field theory \cite{FT14} leads to the conclusion that the Coulomb branch of the 3d theory must be a self mirror-dual space \cite{Tachi14,D-thesis} which requires as extra data a maximally compatible collection of discrete charges for line operators \cite{GMN-BPS}. As $\Higgs_G(C)$ is self mirror-dual only for self Langlands dual groups $G$ \cite{DonPan12}, it cannot be the desired 3d Coulomb branch.

This work originated out of a desire to understand these self-dual moduli spaces, and reasonable candidate spaces $\frac{\mc{M}_{\mf{g}}(C)}{\Gamma}$ are supplied in Corollary \ref{cor:self-dual-stacks}. The spaces $\frac{\mc{M}_{\mf{g}}(C)}{\Gamma}$ are mathematically self-dual and consistent with physical expectations \cite{GMN-BPS,Tachi14}, although there is not yet a direct derivation of these spaces from physical principles.

More broadly, the main results of this paper may be understood in the context of mirror symmetry and Langlands duality for Hitchin systems. S-duality for 4d $\mc{N}=4$ supersymmetric Yang-Mills theory predicts that the Hitchin fibration for the group $G$ will be SYZ mirror dual to the Hitchin fibration for the Langlands dual group $^LG$ \cite{BJSV95,HMS95,KapWit07}; this has been proved for arbitrary reductive $G$ by Donagi and Pantev \cite{DonPan12}, and for type A Hitchin systems coupled to a nontrivial $B$-field by Hausel and Thaddeus \cite{HauTha03}. Theorems \ref{thm:dual-of-M}-\ref{thm:dual-of-intquot} and Corollary \ref{cor:derived-cats} of this paper may be interpreted as an extension of these dualities to incorporate Hitchin systems for arbitrary semisimple groups coupled to nontrivial $B$-fields (i.e.\ equipped with a finite group or $\mc{O}^\times$ gerbe).


\subsection{Summary of mathematical results}\label{ss:math-summary}

The mathematical content of the paper is as follows.

Section \ref{s:maths-background} is devoted to a review of the mathematical background required for the main body of the paper. This background is comprised of two main topics:

Section \ref{ss:CGS} is a review of the notion of shifted Cartier duality for commutative group stacks, a categorification of the notion of ordinary Cartier or Pontrjagin duality where the characters of a group (homomorphisms to $\mb{G}_m$) are replaced by multiplicative line bundles on the group (homomorphisms to $B\mb{G}_m$). Example \ref{eg:BunT-dual} should be paid particular attention in this section, as it reappears in some of the arguments made later in the paper.

Section \ref{ss:higgs-cameral} is a review of certain algebraic aspects of the geometry of the moduli of Higgs bundles, arising from its description as a mapping stack. From this point of view familiar features such as the Hitchin map, Kostant section, interpretation of the Hitchin base in terms of cameral covers, and abelianisation away from the discriminant locus can be understood as induced by the geometry of the adjoint quotient map $\mf{g}\to\mf{h}/W$.

Section \ref{s:stack-dual} contains the primary original contributions of this paper, the main result of which is the construction of a duality between commutative group stacks generalising the moduli stacks of Higgs vector bundles with fixed (nontrivial) determinant \cite{HauTha03}. This duality implies an equivalence of bounded derived categories, and when $\mf{g}$ is of type A we may also derive a certain ``topological mirror symmetry'' statement -- in particular an equality of stringy $E$-polynomials -- by applying the results of \cite{GWZ17,GWZ18}.

Section \ref{ss:compare-isog} contains a comparison of the Hitchin fibres for isogeneous simple groups, showing that for a smooth compact Riemann surface the fibres for isogeneous groups are isogenous abelian varieties. In Section \ref{ss:construction} I construct the moduli stack of primary interest in this paper, $\mc{M}_{\mf{g}}(C)$ -- roughly ``the moduli stack of $\widetilde{G}$-Higgs bundles of arbitrary degree, modulo $Z(\widetilde{G})$'' -- and describe its structure locally over the Hitchin base. In Sections \ref{ss:compareJ} and \ref{ss:bullet-dual} I tease out the likely structure of the dual $\mc{M}_{\mf{g}}(C)^D$ by comparing certain group schemes of regular centralisers and dualising the stack $\HiggsSt_{\widetilde{G}}^\bullet(C)$ of ``$\widetilde{G}$-Higgs bundles of arbitrary degree''. Finally, in Sections \ref{ss:M-dual} and \ref{ss:derived-equivalence} I conclude the proofs of the desired duality theorems via an application of the Langlands duality results of Donagi and Pantev \cite{DonPan12}. The hypotheses under which one obtains a \emph{self}-dual commutative group stack are also made explicit in this section.

Section \ref{s:many-egs} is devoted to four examples of the main duality result, chosen to illustrate different behaviours and compare with results that have appeared previously in the literature. These are: (1) A comparison to the analysis in \cite{GMN-BPS}, which (so far as I am aware) is the first place where it was observed that the Coulomb branch of a class $\mc{S}$ theory compactified on $S^1$ should be a quotient of a moduli space of Higgs bundles depending on a choice of some extra discrete data; (2) a comparison to the SYZ mirror symmetry results of Hausel and Thaddeus \cite{HauTha03}; (3) a comparison to the moduli of Higgs bundles for a self Langlands dual group, which by the results of Donagi and Pantev is self mirror dual space \cite{DonPan12}; (4) an examination of the duality theorem for Lie algebras of type B and C.

Following the main body of the paper, there are three appendices containing supplementary results: Appendix \ref{s:FM} contains an analysis of how the canonical Fourier-Mukai transform acts on the $\mb{Z}\times\mb{Z}$-grading of the derived category of a $\mb{G}_m$-gerbe over a torsor for a commutative group stack; Appendix \ref{s:Weyl} contains a result on when the fixed points of the Weyl group action on a maximal torus may be lifted through an isogeny to another fixed point; and Appendix \ref{s:structure-results} contains results on the structure of the group $\widetilde{G}_\tau$ used in the construction of the moduli space $\mc{M}_{\mf{g}}(C)$ and its Langlands dual.


\subsection{The physical conjecture}\label{ss:physical-conjecture}

Before beginning the mathematical bulk of the paper in earnest, let us consider in slightly more detail the physical situation to which I conjecture it is applicable. The rest of the paper is independent of this section, hence no harm will come to those who wish to skip ahead to the rigorous mathematics.\footnote{On the other hand, those who desire an extended discussion on the topic of this section may be interested in Chapter 2 of \cite{D-thesis}.}


\subsubsection{Some physical background}\label{sss:physical-background}

In order to set a common language, I will begin by setting some terminology. Let $S$ be a structure that can be placed on a manifold\footnote{More accurately, $S$ should collect together different compatible structures for different dimensional manifolds; see \cite{TachiPseudo} for more on this point, as well as for a more detailed discussion of Quasi-Definition \ref{qdef:QFT}.} (e.g.\ smooth structure, Riemannian metric, spin structure, supermanifold structure, $G$-bundle with connection, etc.).

\begin{qdef}[Quantum Field Theory]\label{qdef:QFT}
An \emph{(extended) $d$-dimensional $S$-structured quantum field theory (QFT)}, $\mc{Z}$, is a procedure for functorially assigning
\begin{itemize}
	\item a $\mb{C}$-number $\mc{Z}(M^d)$ to every closed $d$-manifold with structure $S$ (the \emph{correlation function} or \emph{path integral}),
	\item a $\mb{C}$-vector space $\mc{Z}(N^{d-1})$ to every closed $(d-1)$-manifold with structure $S$ (the \emph{space of states}),
	\item a $\mb{C}$-linear category $\mc{Z}(P^{d-2})$ to every closed $(d-2)$-manifold with structure $S$,
	\item higher (appropriately $\mb{C}$-linear) categorical data to higher codimension manifolds with structure $S$,
\end{itemize}
subject to \emph{unitarity} and \emph{locality} constraints.

Furthermore, for every $k<d$ there is a collection of \emph{$k$-dimensional submanifold operators $\{\mc{O}^{(k)}\}$} that may be used to decorate a given manifold, e.g.\ we may evaluate the correlation function of a collection of operators
\begin{align}\label{eq:Z-op-insert}
	\mc{Z}(M^d;\mc{O}^{(k_1)}_{a_1},\ldots,\mc{O}^{(k_l)}_{a_l})\in\mb{C} .
\end{align}
\end{qdef}

\begin{eg}\label{eg:TFT}
It is possible to give a rigorous version of Quasi-Definition \ref{qdef:QFT} in the case where our QFT is a \emph{topological quantum field theory (TQFT)}. In \cite{Lur09} Lurie defines a fully extended topological field theory valued in a symmetric monoidal $(\infty,n)$-category $\mc{C}$ to be a symmetric monoidal functor from a domain bordism $(\infty,n)$-category to $\mc{C}$. Moreover, the \emph{cobordism hypothesis} (due to Baez-Dolan \cite{BaezD95}, Lurie \cite{Lur09}, and others) states that such TQFTs satisfy the strongest possible locality constraint: namely, they are determined by what they evaluate to on a connected 0-manifold (i.e.\ a point).
\end{eg}

\begin{eg}\label{eg:triv-QFT}
The \emph{trivial $d$-dimensional QFT}, $\triv^d$, assigns the number 1 to every $d$-manifold, the 1-dimensional vector space $\mb{C}$ to every $(d-1)$-manifold, the $\mc{C}$-linear category $\Vect_{\mb{C}}$ to every $(d-2)$-manifold, and so on, assigning an $n$-categorical version of a $\mb{C}$-linear symmetric monoidal unit to every $(d-n)$-manifold.
\end{eg}

In order to explain why it is reasonable to conjecture that the moduli spaces $\frac{\mc{M}_{\mf{g}}(C)}{\Gamma}$ are the Coulomb branches of the compactifications of theories of class $\mc{S}$ on $S^1$, it is necessary to generalise the class of theories under consideration to include the \emph{relative field theories} of Freed and Teleman (cf.\ also the ``twisted field theories'' of Stolz and Teichner \cite{ST11}). These are defined in \cite{FT14} as follows: Given a $(d+1)$-dimensional QFT $\alpha$, denote by $\alpha_{\leq d}$ its truncation to manifolds of dimension $\leq d$. Then a \emph{quantum field theory $\mc{Q}$ relative to $\alpha$} is either a homomorphism
\begin{align}\label{eq:coRQFT}
	\mc{Q}: (\triv^{d+1})_{\leq d} \to \alpha_{\leq d} ,
\end{align}
or a homomorphism
\begin{align}\label{eq:contraRQFT}
	\mc{Q}: \alpha_{\leq d}\to (\triv^{d+1})_{\leq d} .
\end{align}
\begin{remark}\label{rk:oplax-theo-claudia}
There are subtleties involved in making this definition precise; for more on this, see the work of Johnson-Freyd and Scheimbauer on defining (op)lax twisted field theories \cite{JFS17}.
\end{remark}

\begin{eg}\label{eg:WZW-anomaly}
The chiral WZW model can be described as a quantum field theory relative to Chern-Simons gauge theory \cite{FT14}. For instance: observe that a relative QFT $\mc{Q}:(\triv^{d+1})_{\leq d} \to \alpha_{\leq d}$ assigns to a $d$-manifold $M^d$ a linear map
\begin{align}\label{eq:RQFT-dmfld}
	\mc{Q}(M^d): (\triv^{d+1})(M^d)=\mb{C} \to \alpha(M^d)
\end{align}
or equivalently (by taking the image of $1\in\mb{C}$) $\mc{Q}(M^d)\in\alpha(M^d)$. Let $\mc{Q}=\mc{Z}_k$ be the chiral WZW model at level $k$ and $\alpha=CS_{G,k}$ be Chern-Simons gauge theory at level $k$. It is shown in \cite{W92} that $\mc{Z}_k(\Sigma;(P,\gd))$ is a gauge invariant holomorphic section of $\mc{L}^{\tens k}$, the ($k^{th}$ tensor power of the) \emph{prequantum line bundle} over the space of all $G$-connections on $\Sigma$; further, this is the space of physical states in 3d Chern-Simons gauge theory at level $k$ \cite{W89}. Hence we have that
\begin{align}\label{eq:WZW-CS}
	\mc{Z}_k(\Sigma)\in H^0(\mc{M}(\Sigma,G);\mc{L}^{\tens k}) = CS_{G,k}(\Sigma) .
\end{align}
\end{eg}

Given a relative QFT, one might try and produce an ``ordinary'' or ``absolute'' QFT from it in the following manner:

\begin{qdef}\label{qdef:absolution}
An \emph{absolution} of a relative QFT $\mc{Q}: (\triv^{d+1})_{\leq d} \to \alpha_{\leq d}$ (resp. $\mc{Q}: \alpha_{\leq d}\to (\triv^{d+1})_{\leq d}$) is another relative QFT $\mc{A}: \alpha_{\leq d}\to (\triv^{d+1})_{\leq d}$ (resp. $\mc{A}: (\triv^{d+1})_{\leq d} \to \alpha_{\leq d}$). If $\mc{A}$ is an absolution of $\mc{Q}$, say that \emph{$\mc{A}$ absolves $\mc{Q}$}.
\end{qdef}

Further discussion of absolution is deferred to the next section, where it will be considered in the context of a specific relative theory, ``Theory $\mf{X}$''.


\subsubsection{Theory $\mf{X}$ and the Coulomb branch}\label{sss:theory-X-coulomb}

One motivation for introducing the formalism of relative QFTs in \cite{FT14} was the desire to understand the structure of a mysterious 6-dimensional theory discovered in \cite{Str96,W96}, particularly those features predicted in \cite[\S4]{W10} which relate to the geometric Langlands program. This theory, known as \emph{Theory $\mf{X}$}, is a 6d (0,2)-superconformal field theory with no classical Lagrangian description.

As explained in \cite[Data 5.1]{FT14}, the data required to specify a Theory $\mf{X}$ is
\begin{enumerate}[(1)]
	\item A real Lie algebra $\mf{g}$ with an invariant inner product $\la -,-\ra$ such that all coroots have square length 2, and
	\item A full lattice $\Gamma$ in a choice of Cartan subalgebra $\mf{h}$, such that $\Gamma$ contains the coroot lattice of $\mf{g}$ and such that $\la-,-\ra$ is integral and even on $\Gamma$.
\end{enumerate}
In what follows I will focus on the case where $\mf{g}$ is semisimple and simply-laced -- in this case the lattice is exactly the coroot lattice $\Pi_R$, and the inner product is a specific normalisation of the Killing form of $\mf{g}$.

\begin{remark}\label{rk:centre-of-group}
Note that if $\widetilde{G}$ is the simply-connected Lie group with Lie algebra $\mf{g}$, the centre of the group may be expressed as $Z(\widetilde{G})=\Pi_W / \Pi_R$ (notation as in \eqref{eq:coroot-coweight}), and the inner product $\la-,-\ra$ induces a symmetric perfect pairing $Z(\widetilde{G})\times Z(\widetilde{G})\to U(1)$.
\end{remark}

Given the above data, \cite[Expectation 5.3]{FT14} predicts the existence
of a 7d TQFT $\alpha_{\mf{g}}$ and a 6d QFT $\mf{X}_{\mf{g}}$ relative to $\alpha_{\mf{g}}$. Explicitly, at the first two category levels: \index{$\alpha_{\mathfrak{g}}$}
\begin{itemize}
	\item To a 6-manifold $X$ $\alpha_{\mf{g}}$ assigns a (finite dimensional) vector space, and the partition function of $\mf{X}_{\mf{g}}$ is a vector $\mf{X}_{\mf{g}}(X)\in\alpha_{\mf{g}}(X)$.
	\item To a 5-manifold $Y$ $\alpha_{\mf{g}}$ assigns a linear category,\footnote{Modelled on {\bf topological} vector spaces.} and the space of states of $\mf{X}_{\mf{g}}$ is an object $\mf{X}_{\mf{g}}(Y)\in\alpha_{\mf{g}}(Y)$.
\end{itemize}
A discussion of the predicted structure of $\alpha_{\mf{g}}$ can be found in \cite[\S5]{FT14} -- I will restrict my discussion here to a description of the partition vector $\mf{X}_{\mf{g}}(X)$ (following \cite{W10,Tachi14}).

Let $X$ be a compact oriented 6-manifold, and consider the middle cohomology group $H^3(X;Z(\widetilde{G}))$. The composition of cup product, the perfect pairing on $Z(\widetilde{G})$, and evaluation against the fundamental class yields a nondegenerate skew pairing
\begin{align}\label{eq:6-mfld-skew}
	\omega:H^3(X;Z(\widetilde{G}))\times H^3(X;Z(\widetilde{G})) \to U(1) .
\end{align}
Such a pairing defines a $U(1)$ central extension known as the \emph{Heisenberg group}, \index{Heisenberg group}
\begin{align}\label{eq:Heisenberg}
	1\to U(1) \to \mc{H}(X,\omega)\to H^3(X;Z(\widetilde{G}))\to 0
\end{align}
characterised by the property that any lifts\footnote{Note that $\Phi$ cannot be a homomorphism (the extension is non-split).} $\Phi(a),\Phi(b)\in\mc{H}(X,\omega)$ of elements $a,b\in H^3(X;Z(\widetilde{G}))$ will satisfy the Heisenberg commutation relation
\begin{align}\label{eq:Hberg-rels}
	\Phi(b)\Phi(a)=\omega(a,b)\Phi(a)\Phi(b) .
\end{align}
The Stone-von Neumann Theorem \cite[Ch.2]{Pol03} states that up to non-canonical isomorphism there is a unique irreducible representation of $\mc{H}(X,\omega)$ on which the central $U(1)$ acts via scalar multiplication. Then $\alpha_{\mf{g}}(X)$ is supposed to be the underlying vector space of this representation.

Here we encounter a problem which, to the best of my knowledge, remains unresolved: namely, to define $\alpha_{\mf{g}}(X)$ it is not sufficient to provide an isomorphism class of vector spaces -- one must specify a \emph{representative} for this isomorphism class. This requires a choice of Lagrangian subgroup\footnote{More accurately, it requires a choice of splitting $L\to\mc{H}(X,\omega)$. \label{fn:splitting}} $L\subset H^3(X;Z(\widetilde{G}))$ (the representation is constructed by considering a class of $L$-invariant functions). Denote the corresponding representation by $\alpha_{\mf{g}}(X;L)$.

Now, given two choices of Lagrangian subgroup\footnote{Satisfying a compatibility condition which depends on the splittings of Footnote \ref{fn:splitting}.} $L_1$ and $L_2$ there is a standard ``Fourier transform'' isomorphism $\alpha_{\mf{g}}(X;L_1)\to\alpha_{\mf{g}}(X;L_2)$, and so one might still hope that $\alpha_{\mf{g}}(X;L)$ is canonically defined. However, given \emph{three} Lagrangian subgroups $L_1$, $L_2$ and $L_3$, the composition
\begin{align}\label{eq:hope-is-lost}
	\alpha_{\mf{g}}(X;L_1)\to\alpha_{\mf{g}}(X;L_2)\to\alpha_{\mf{g}}(X;L_3)\to\alpha_{\mf{g}}(X;L_1)
\end{align}
is not necessarily the identity, but is instead multiplication by some scalar $c(L_1,L_2,L_3)$ \cite[Ch.4]{Pol03}. Hence, absent a choice of Lagrangian subgroup, the canonically defined object is really
\begin{align}\label{eq:canonically-projective}
	\mb{P}\alpha_{\mf{g}}(X) := \mb{P}(\alpha_{\mf{g}}(X;L))
		\quad\text{for any Lagrangian subgroup $L$.}
\end{align}
Following \cite{Tachi14}, I will set this problem aside for the moment in favour of choosing a decomposition $H^3(X;Z(\widetilde{G}))\cong A\oplus B$ where $A,B$ are maximal isotropic (and so in duality with each other via the pairing $\omega$), and choosing splittings $\Phi_A:A\to\mc{H}(X,\omega)$ and $\Phi_B:B\to\mc{H}(X,\omega)$. The action of the elements $\Phi_A(a)$ on $\alpha_{\mf{g}}(X;A)$ may be simultaneously diagonalised by a basis $\{Z_b(X)\}_{b\in B}$ on which the action of $\mc{H}(X,\omega)$ is determined by
\begin{align}\label{eq:basis-action}
	\Phi_A(a)Z_b(X) = \omega(a,b)Z_b(X)
	\quad\text{and}\quad
	\Phi_B(b)Z_{b'}(X) = Z_{b+b'}(X) .
\end{align}
Then the partition vection of Theory $\mf{X}$ (with respect to all the choices we have been forced to make) is given by
\begin{align}\label{eq:X-partition-vector}
	\mf{X}_{\mf{g}}(X) = (Z_b(X))_{b\in B} \in \alpha_{\mf{g}}(X;A) .
\end{align}

Now, suppose that you wanted to absolve Theory $\mf{X}$. Following \cite{Tachi14}, you could choose \emph{another} Lagrangian subgroup $L\subset H^3(X;Z(\widetilde{G}))$ (and splitting $\Phi_L$), not necessarily related to $A$ or $B$. The space of $L$-invariants $\alpha_{\mf{g}}(X;A)^L$ is 1-dimensional, and so the projection of $\mf{X}_{\mf{g}}(X)$ to this subspace gives us an honest partition function $\mf{X}_{\mf{g}}(X;L)$.

\begin{eg}\label{eg:different-theory-x-absolved-partition-fncs}
For $L=A$ the partition function is given by $\mf{X}_{\mf{g}}(X;A)=Z_0$, while for $L=B$ it is given by $\mf{X}_{\mf{g}}(X;B)=\sum_{b\in B} Z_b$.
\end{eg}

This suggests that if one could specify a choice of such a subgroup $L(X)$ in a consistent/functorial manner for all $X$, this might be enough to determine an absolution of Theory $\mf{X}$.

Rather than attempting to do this in full generality, I will restrict to the class of theories relevant to this paper: the \emph{theories of class $\mc{S}$} of Gaiotto, Moore and Neitzke \cite{GMN-WKB}. This class of theories is obtained by compactifying (a particular twist of \cite{FKL,TachiPseudo}) Theory $\mf{X}_{\mf{g}}$ on a Riemann surface $C$. The resulting theory, denoted $\mc{S}_{\mf{g}}[C]$, is still a relative QFT \cite{GMN-BPS,Tachi14}.

As per the above, in order to define an honest partition function for a theory of class $\mc{S}$ on a 4-manifold $M$ we should consider Lagrangian subgroups $L$ of $H^3(M\times C;Z(\widetilde{G}))$. $\omega$ is nondegenerate when restricted to the summands $H^2(M;Z(\widetilde{G}))\tens H^1(C;Z(\widetilde{G}))$ and $H^3(M;Z(\widetilde{G}))\oplus (H^1(M;Z(\widetilde{G}))\tens H^2(C;Z(\widetilde{G})))$ of the K\"unneth decomposition, so we can consider these pieces separately from each other. To specify a Lagrangian subgroup for all 4-manifolds $M$ simultaneously, we choose a Lagrangian subgroup $\Gamma\subset H^1(C;Z(\widetilde{G}))$ and specify $L:=(H^1(M;Z(\widetilde{G}))\tens H^2(C;Z(\widetilde{G})))\oplus H^2(M;Z(\widetilde{G}))\tens\Gamma$.

These choices give rise to an honest QFT $\mc{S}_{\mf{g}}[C,\Gamma]$, whose Hilbert space associated to a 3-manifold of the form $\Sigma\times S^1$, $\Sigma$ a compact oriented surface, is graded by an abelian group containing the summand $H^2(C;Z(\widetilde{G}))$ \cite[(5.14)]{Tachi14}.\footnote{Note that to obtain this grading we have chosen the basis dual to the one chosen in \cite{Tachi14}. For the purposes of this equation I have also ignored the simple-connectivity assumption and instead assumed that the 4d theory is not coupled to a nontrivial background $Z(\widetilde{G})$-torsor via the $Z(\widetilde{G})$ global symmetry.}

The appearance of this summand in the grading may be interpreted as a grading by the second ``Stiefel-Whitney'' classes of $G_{\ad}$-bundles on $C$ \cite[\S6]{Tachi14}, suggesting that the Coulomb branch of the 3d theory should have connected components labelled by $H^2(C;Z(\widetilde{G}))\cong Z(\widetilde{G})$. On the other hand, these Coulomb branches are conjecturally self SYZ mirror dual. The self-duality observation led Tachikawa to conjecture that the Coulomb branch of the 3d theory should be $\Higgs_{\widetilde{G}}(C)/\Gamma$ \cite[\S5.4]{Tachi14}, however this space is connected (and so in particular does not have connected components labeled by $Z(\widetilde{G})$).

In order to incorporate the extra connected components while preserving self-duality the Coulomb branch must be equipped with extra ``stacky'' structure -- namely, it should be a $Z(\widetilde{G})$-gerbe over the course moduli space. In Section \ref{s:stack-dual} I will construct a stack $\mc{M}_{\mf{g}}(C)/\Gamma$ which has precisely this structure, and so it is reasonable to make the following conjecture.

\begin{conj-nonum}
The 3d Coulomb branch of $\mc{S}_{\mf{g}}[C,\Gamma]$ is $\mc{M}_{\mf{g}}(C)/\Gamma$.
\end{conj-nonum}


\subsection{Notation and conventions}\label{ss:conventions}


\subsubsection{Lie theoretic conventions}\label{sss:lie-conventions}

In the following, $G$ is most generally a complex reductive algebraic group, however at times I will note further assumptions of simplicity, simple connectivity, etc. Lie algebras will be denoted by lower case fraktur font, so for instance the Lie algebra of $G$ will be denoted by $\mf{g}$. Given a semisimple group $G$, I will denote by $\widetilde{G}$ the corresponding simply-connected form and by $G_{\ad}$ the corresponding adjoint form. 

A choice of Borel subgroup of $G$ will be denoted $B$, with Lie algebra $\mf{b}$, and a choice of maximal torus will be denoted by $H$ with Lie algebra $\mf{h}$. The notation $T$ is reserved for an algebraic torus that is \emph{not} the maximal torus of a semisimple group $G$, and the (abelian) Lie algebra of such a torus is denoted $\mf{t}$. The rank of a reductive algebraic group $G$ will be denoted by $\rank(G)$, or just by $r$.

When considering the Weyl group associated to a maximal torus $H\subset G$ I will use the notation $W_G(H) = N_G(H)/H$; when I do not need to emphasise the maximal torus $H$ I will just write $W$.

The set of roots of the group $G$ will be denoted by $R$, and a choice of positive roots will be denoted $R^+$. Given a choice of positive roots, the corresponding simple roots will be denoted $S$.

If $M$ is a set or space with a $G$-action I will denote by $M^G$ the fixed points of the $G$-action.

Finally, there are many notations in the literature for the lattices that appear in the study of reductive algebraic groups. As it can sometimes be difficult to keep straight what each piece of notation means (particularly across different references) I have opted to use a notation that makes manifest the input data and the variance for each lattice without being cumbersome. As above, let $T$ denote an algebraic torus, and let $G$ denote a reductive algebraic group with chosen maximal torus $H$:
\begin{itemize}
	\item Denote the character lattice of $T$ by $X^\bullet(T):=\Hom(T,\mb{C}^\times)$, and the cocharacter lattice by $X_\bullet(T):=\Hom(\mb{C}^\times,T)$. When convenient, these can be identified as subgroups $X^\bullet(T) \subset \mf{t}^\ast$ and $X_\bullet(T) \subset \mf{t}$.
	\item Denote by $X^\bullet(G,H):=X^\bullet(H)$ the character lattice corresponding to a choice of maximal torus $H\subset G$; similarly denote the corresponding cocharacter lattice by $X_\bullet(G,H)$. When convenient these can be identified as subgroups $X^\bullet(G,H)\subset \mf{h}^\ast$ and $X_\bullet(G,H)\subset \mf{h}$.
\end{itemize}
When $G$ is semisimple and $H$ is a choice of maximal torus I will denote the root and weight lattices by
\begin{align}\label{eq:root-weight}
	\Lambda_R = X^\bullet(G_{\ad},H_{\ad})
		\quad\text{and}\quad
		\Lambda_W = X^\bullet(\widetilde{G},\widetilde{H})
\end{align}
and the coroot and coweight lattices by
\begin{align}\label{eq:coroot-coweight}
	\Pi_R = X_\bullet(G_{\ad},H_{\ad}) = \Lambda_R^{\wed} = \Hom_{\mb{Z}}(\Lambda_R,\mb{Z})
		\quad\text{and}\quad
		\Pi_W = X_\bullet(\widetilde{G},\widetilde{H}) = \Lambda_W^{\wed}.
\end{align}


\subsubsection{Geometric conventions}\label{sss:geometry-conventions}

A general complex scheme or manifold will be denoted by $X$, with structure sheaf $\mc{O}_X$, and a general test scheme will be denoted $S$. Throughout most of the paper, the constant sheaf on $X$ valued in $A$ is denoted $A_X$. (The exception occurs between Lemma \ref{lemma:J-surj} and Example \ref{eg:surj-fails}, where this notation will refer to a different sheaf in accordance with the notation used in \cite{DonGai02}.) The notation $C$ will be reserved for the situation where the space in question is a Riemann surface or an algebraic curve (usually, but not always, of genus $g>1$).

Given a space $X$ and spaces equipped with maps to $X$, $Y_1\to X$ and $Y_2\to X$, I will denote by $\Hom_X(Y_1,Y_2)$ the collection of maps $Y_1\to Y_2$ in the slice category of spaces with a map to $X$.

Given a group $G$, I will use the algebro-geometric terminology \emph{$G$-torsor} to refer to a principal $G$-bundle. I.e.\ a $G$-torsor over a space $X$ is a space $P\to X$ equipped with a (right) $G$-action, such that (1) the map $(\id_P,\text{act}):P\times G \to P\times_X P$ is an isomorphism and (2) $P$ admits local sections. Here the terms ``space'' and ``local'' are deliberately vague, as this definition is applicable to many different categories and Grothendieck topologies.

As a general rule, stacky moduli spaces are denoted via calligraphic and italic fonts, while coarse moduli spaces are denoted via bold font. Stacky quotients are denoted by square brackets $[\,/\,]$: if $X$ is equipped with a right action of $G$, then $[X/G]$ denotes the stack with presentation given by the groupoid \cite[\S2.4.3]{LMB00}
\begin{equation}\label{diag:stack-quot-present}
\begin{tikzcd}
	X\times G \ar[d, shift right,"s"']\ar[d,shift left,"t"]	\\
	X
\end{tikzcd} \quad s(x,g)=x,\quad t(x,g)=x\cdot g.
\end{equation}
Given two stacks $\mc{Y}$ and $\mc{Z}$, I will denote by $\MapSt(\mc{Y},\mc{Z})$ the sheaf of groupoids whose $S$ points are given by $\Map_S(\mc{Y}\times S, \mc{Z}\times S)$ for any affine scheme $S$. Similarly, if $\mc{A}$, $\mc{B}$ are commutative group stacks, I will denote by $\HomSt(\mc{A},\mc{B})$ the commutative group stack whose $S$-points are given by $\Hom_S(\mc{A}\times S,\mc{B}\times S)$ for any affine scheme $S$ \cite[XVIII]{SGA4c}.

Finally and importantly: from Important Remark! \ref{rk:good-locus} onwards, I will implicitly restrict away from the discriminant locus of the Hitchin base (see Definition \ref{def:Hitchin-base} and \eqref{eq:discr}). The duality results of Section \ref{s:stack-dual} will hold over this dense open set of $\Hitch_{\mf{g}}(C)$ -- the question of whether or not this duality may be extended over the discriminant is still open. Partial results in this direction have been obtained by Arinkin and Fedorov \cite{A13,AF16}.


\subsubsection{Duality conventions}\label{sss:duality-conventions}
This paper involves significant interplay between various well-known dualities. To distinguish between them I use the following notation:
\begin{itemize}
	\item $^L(-)$ denotes an object obtained via Langlands duality, e.g.\ the Langlands dual group $^L G$.
	\item $(-)^\vee$ denotes the Pontrjagin dual group $\Hom(-,U(1))$ or $\Hom(-,\mb{G}_m)$, depending on context.
	\item $(-)^{\wed}$ denotes the dual lattice to an abelian group, $(-)^{\wed}:=\Hom(-,\mb{Z})$.
	\item $(-)^D$ denotes the shifted Cartier dual $\Hom(-,\mc{O}^\times[1])$ or $\Hom(-,B\mb{G}_m)$, depending on context. E.g.\ if $A$ is an abelian variety then $A^D$ is the usual dual abelian variety.
\end{itemize}
\begin{remark}\label{rk:alg-to-syz}
To relate the algebraic concept of shifted Cartier duality for the Hitchin fibration to the usual notion of an SYZ fibration (i.e.\ in terms of special Lagrangian fibrations and flat $U(1)$-connections \cite{SYZ96}), simply perform a hyperk\"ahler rotation on the moduli space of semistable Higgs bundles \cite{Hit97}.
\end{remark}


\subsubsection*{Acknowledgements.} I wish to thank my Ph.D.\ advisors David Ben-Zvi and Andrew Neitzke, who originally suggested I study these self-dual spaces, and who have provided me with invaluable advice and guidance. I also thank all those with whom I have discussed this project, in particular Michael Groechenig for his comments on topological mirror symmetry, and Ron Donagi, whose feedback helped clarify and streamline some of the arguments. Finally, I thank the anonymous referees for their helpful comments and suggestions.

This paper was begun while I was supported as a graduate student at the University of Texas at Austin, and was completed with support from the Perimeter Institute for Theoretical Physics and the University of Toronto. Research at Perimeter Institute is supported in part by the Government of Canada through the Department of Innovation, Science and Economic Development Canada and by the Province of Ontario through the Ministry of Economic Development, Job Creation and Trade. I also acknowledge support from U.S.\ National Science Foundation grants DMS 1107452, 1107263, 1107367 ``RNMS: Geometric Structures and Representation Varieties” (the GEAR Network).''

\label{a}


\section{Review of Cartier duality and Higgs bundles}\label{s:maths-background}

To begin, let us briefly review the mathematical concepts central to the paper: (1) shifted Cartier duality for commutative group stacks, and (2) the moduli of Higgs bundles and the Hitchin fibration.


\subsection{Commutative group stacks and shifted Cartier duality}\label{ss:CGS}


\subsubsection{Definition and examples}\label{sss:CGS-def-eg}

Categorical background, e.g.\ material on symmetric monoidal categories, may be found in \cite{MacL71,KSch06}. Background on stacks and descent theory may be found in \cite{LMB00,Vi05}. Material on shifted Cartier duality may be found in \cite{CZ17,Bro14,Camp17} as well as in Arinkin's appendix to \cite{DonPan08}. As always, $k$ denotes an algebraically closed field.

\begin{defn}\label{def:Pic-gpd}
A \emph{Picard groupoid} is a symmetric monoidal category in which every object is invertible (with respect to the monoidal structure) and every morphism is invertible (in the usual sense).
\end{defn}

\begin{remark}\label{rk:equiv-class}
Given a Picard groupoid $(\mc{C},\tens)$ the set of equivalence classes of objects $\pi_0\mc{C}$ is a commutative group in a canonical way.
\end{remark}

The canonical example of a Picard groupoid, which in particular explains the nomenclature, is as follows:

\begin{eg}\label{eg:Picard}
Let $X$ be a complex manifold, and consider the category whose objects are holomorphic line bundles on $X$ and whose morphisms are given by isomorphisms of holomorphic line bundles. Tensor product of line bundles endows this category with the structure of a Picard groupoid, and the commutative group obtained by taking $\pi_0$ is exactly the \emph{Picard group} of holomorphic line bundles on $X$.
\end{eg}

\begin{defn}\label{def:CGS}
Let $X$ be a space endowed with a Grothendieck topology. A \emph{commutative group stack on $X$} is a sheaf of Picard groupoids on $X$.
\end{defn}

\begin{remark}\label{rk:CGS-sites}
I have left the meaning of ``space'' in Definition \ref{def:CGS} deliberately ambiguous. In this paper I will primarily work with complex varieties with the analytic or \'etale topology (cf.\ \cite{DonPan08,DonPan12}), although the material in this section applies in much greater generality (e.g.\ algebraic stacks equipped with the fppf topology \cite[XVIII 1.4]{SGA4c}, \cite{Bro14,Camp17}).
\end{remark}

\begin{eg}\label{eg:Hom-stack}
Given two commutative group stacks $\mc{A}$ and $\mc{B}$ over $X$ there is a commutative group stack $\HomSt(\mc{A},\mc{B})$ whose $U$-points are given by the category $\Hom_U(\mc{A}\times_X U,\mc{B}\times_X U)$ \cite[Def. 2.4 \& Ex. 2.8]{Bro14}.
\end{eg}

\begin{eg}\label{eg:ab-var}
Given a $k$-scheme $X$, an \emph{abelian scheme} over $X$ is a smooth group scheme over $X$ whose fibres are abelian varieties (group schemes which are complete varieties over $k$).
\end{eg}

\begin{eg}\label{eg:ab-sheaf}
Any sheaf of abelian groups $K$ over $X$ may be regarded as a commutative group stack with discrete objects (and trivial automorphisms).
\end{eg}

\begin{eg}\label{eg:class-stack}
Given a sheaf of abelian groups $K$ over $X$, the classifying stack $BK$ whose $U$-points are $BK(U)=(\text{groupoid of $K|_U$-torsors on $U$})$ is a commutative group stack.
\end{eg}

\begin{remark}\label{rk:cx-of-sheaves}
There is a convenient reformulation of the theory of commutative group stacks in terms of complexes of sheaves, due to Deligne \cite[XVIII, 1.4]{SGA4c}. Let $\Ch^{[-1,0]}(X)$ denote the 2-category given by:
\begin{itemize}
	\item Objects are complexes of abelian sheaves on $X$ concentrated in degrees -1 and 0, $A^\bullet = [A^{-1}\to A^0]$, such that $A^{-1}$ is injective.\footnote{The injectivity assumption implies that the quotient prestack $[A^0/A^{-1}]$ is already a stack.}
	\item Morphisms are chain maps of complexes.
	\item 2-morphisms are homotopies of chain maps.
\end{itemize}
Given a complex of abelian sheaves of the form $A^{-1}\to A^0$ the quotient stack $[A^0/A^{-1}]$ is a commutative group stack on $X$. This construction gives an equivalence between $\Ch^{[-1,0]}(X)$ and the 2-category of commutative group stacks on $X$ \cite[XVIII, 1.4.17]{SGA4c}. This may be interpreted as a (length 1) form of the Dold-Kan correspondence between simplicial objects and chain complexes.
\end{remark}


\subsubsection{Shifted Cartier duality}\label{sss:Cartier-duality}

Given an abelian variety $A$ the \emph{dual abelian variety} $A^D$ is the moduli space of multiplicative line bundles on $A$. Recalling that $B\mb{G}_m$ is the classifying stack for $\mb{G}_m$-torsors, i.e.\ algebraic line bundles, the ``multiplicative'' condition may be translated into the statement that $A^D := \HomSt(A,B\mb{G}_m)$. This example may be generalised as follows:

\begin{defn}\label{def:Cartier-dual}
Let $\mc{A}$ be a commutative group stack over $X$. The \emph{shifted Cartier dual of $\mc{A}$} is the commutative group stack $\mc{A}^D:=\HomSt(\mc{A},B\mb{G}_m)$.
\end{defn}

\begin{remark}\label{rk:Cartier-analytic}
When working over $\mb{C}$ in the analytic topology, the definition/notation $\mc{A}^D=\HomSt(\mc{A},B\mc{O}^\times)$ is sometimes used, cf.\ \cite{DonPan08,DonPan12}.
\end{remark}

\begin{defn}\label{def:reflexive}
A commutative group stack $\mc{A}$ is \emph{reflexive} if the canonical morphism $\mc{A}\to(\mc{A}^D)^D$ is an isomorphism.
\end{defn}

\begin{remark}\label{rk:term-origin}
The terminology ``reflexive'' is adopted after \cite{Camp17}; the same property is termed ``dualisability'' in \cite{Bro14}.
\end{remark}

\begin{eg}[Dualising sheaves and classifying stacks {\cite[Cor.\ 3.5{-}6]{Bro14}}]\label{eg:dual-BK}
Let $K$ be a sheaf of abelian groups on $X$. Then:
\begin{enumerate}
	\item There is a canonical isomorphism $(BK)^D \simeq K^\vee=\HomSt(K,\mb{G}_m)$.
	\item There is a canonical homomorphism $B(K^\vee)\to K^D$, which is an isomorphism if $\ExtSh^1(K,\mb{G}_m)=0$.
\end{enumerate}
In particular, if $K$ is locally finitely generated then $K$ is a reflexive commutative group stack.
\end{eg}

\begin{defn}\label{def:exact}
Given a commutative group stack $\mc{A}$ over $X$ there are two associated sheaves of abelian groups \cite[Def. 2.9]{Bro14}:
(1) the coarse moduli sheaf $\pi_0(\mc{A})$, and (2) the automorphism group of a neutral section $\pi_1(\mc{A})$. A sequence of commutative group stacks $\mc{A}\to\mc{B}\to\mc{C}$ is \emph{exact} if both sequences of sheaves of abelian groups
\begin{align}
	\pi_0(\mc{A})\to\pi_0(\mc{B})\to\pi_0(\mc{C})	\label{eq:pi0-exact} \\
	\pi_1(\mc{A})\to\pi_1(\mc{B})\to\pi_1(\mc{C})	\label{eq:pi1-exact}
\end{align}
are exact.
\end{defn}

The following proposition is immediate:
\begin{prop}\label{p:autoequivalence}
Shifted Cartier duality is an exact, contravariant, involutive autoequivalence on the 2-category of reflexive commutative group stacks.
\end{prop}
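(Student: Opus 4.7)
The plan is to verify the four parts of the statement — contravariance, involutivity, being an autoequivalence, and exactness — with the first three essentially formal and the last being the only substantive point.

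First, contravariance of $D := \HomSt(-,B\mb{G}_m)$ is immediate from its definition, since a morphism $f: \mc{A} \to \mc{B}$ of commutative group stacks induces $f^\ast : \mc{B}^D \to \mc{A}^D$ by precomposition. For involutivity on the reflexive sub-2-category, note that if $\mc{A}$ is reflexive then by definition the evaluation map $\mc{A} \to (\mc{A}^D)^D$ is an isomorphism; dualising it once more produces an isomorphism $\mc{A}^D \xrightarrow{\sim} ((\mc{A}^D)^D)^D$, so $\mc{A}^D$ is again reflexive and $D$ restricts to a contravariant endofunctor of the 2-category of reflexive CGS whose square is naturally equivalent to the identity. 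A contravariant endofunctor with this property is automatically an equivalence with its opposite category, giving the autoequivalence claim.

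The only real content is exactness. My plan here is to translate the problem into homological algebra of complexes of sheaves via Deligne's equivalence (Remark \ref{rk:cx-of-sheaves}): a CGS $\mc{A}$ corresponds to a two-term complex $A^\bullet = [A^{-1} \to A^0]$ with $\pi_1(\mc{A}) = H^{-1}(A^\bullet)$ and $\pi_0(\mc{A}) = H^0(A^\bullet)$, and under this dictionary shifted Cartier duality is identified with the contravariant derived functor $R\mathcal{H}om(-,\mb{G}_m[1])$ (the shift accounts for the difference between $\mb{G}_m$ and $B\mb{G}_m$; compare Example \ref{eg:dual-BK}). With the translation in hand, exactness becomes standard: a short exact sequence of CGS fits into a distinguished triangle of their associated complexes — whose cohomology long exact sequence recovers the two exact sequences of $\pi_0$ and $\pi_1$ in Definition \ref{def:exact} — and the triangulated contravariant functor $R\mathcal{H}om(-,\mb{G}_m[1])$ sends distinguished triangles to distinguished triangles with arrows reversed. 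Reading off the cohomology long exact sequence of the dual triangle yields exactness of $\mc{C}^D \to \mc{B}^D \to \mc{A}^D$.

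The main obstacle I anticipate is the nontrivial direction of the correspondence between ``exact'' in the sense of Definition \ref{def:exact} and ``fits into a distinguished triangle'' in $D^{[-1,0]}(X)$: one must show that the two sheaf-level exact sequences are strong enough to assemble a distinguished triangle, and not merely a weaker four-term exact sequence. This should follow by constructing the cone of the morphism of complexes $A^\bullet \to B^\bullet$ explicitly and computing its cohomology sheaves to identify it with the complex associated to $\mc{C}$. As a fall-back one can work entirely within the 2-category of CGS, using the long exact $\mathrm{Ext}$-sequence associated to a short exact sequence of CGS (as in \cite{Bro14,SGA4c}) and extracting from it the two required exactness statements at the cost of more elaborate diagrammatic bookkeeping.
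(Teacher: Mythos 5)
The paper does not actually prove this proposition: it is introduced with ``The following proposition is immediate,'' so your write-up is supplying an argument the paper omits rather than paralleling one. The route you take is the natural one and is essentially correct, but three points deserve sharpening. First, for involutivity, producing \emph{some} isomorphism $\mc{A}^D\simeq((\mc{A}^D)^D)^D$ by dualising the evaluation map is not yet reflexivity of $\mc{A}^D$, which requires the \emph{canonical} evaluation $\mathrm{ev}_{\mc{A}^D}:\mc{A}^D\to((\mc{A}^D)^D)^D$ to be an isomorphism; the standard fix is the triangle identity $(\mathrm{ev}_{\mc{A}})^D\circ\mathrm{ev}_{\mc{A}^D}\simeq\mathrm{id}$, which exhibits $\mathrm{ev}_{\mc{A}^D}$ as a section of an isomorphism and hence an isomorphism (this is how dualisability of duals is handled in \cite{Bro14}). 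Second, under Deligne's dictionary (Remark \ref{rk:cx-of-sheaves}) the dual is $\tau_{\leq 0}R\mathcal{H}om(A^\bullet,\mb{G}_m[1])$, not the full derived Hom; this is harmless for your argument, since truncation does not change $H^{-1}$ and $H^{0}$, which are exactly $\pi_1$ and $\pi_0$ of the dual, so the long exact sequence of the dual triangle still delivers exactness in the sense of Definition \ref{def:exact}.

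Third, the obstacle you flag at the end is real, but not quite in the form you state: a three-term sequence that is exact in the sense of Definition \ref{def:exact} is only required to be middle-exact on $\pi_0$ and $\pi_1$, and such a sequence need \emph{not} exhibit $\mc{C}$ as the cone of $\mc{A}\to\mc{B}$ (already $0\to 0\to\mc{C}$ is exact in this sense for any $\mc{C}$), so no amount of computing the cone will identify it with $\mc{C}$ in general. What your triangle argument does prove cleanly is that the dual of a fibre sequence (extension) of reflexive commutative group stacks is again exact in the sense of Definition \ref{def:exact} -- the six-term cohomology sequence of the dual triangle gives middle-exactness of both $\pi_0$ and $\pi_1$ rows -- and this is precisely how the proposition is used in the paper (e.g.\ in passing from \eqref{diag:coarse-moduli} to \eqref{diag:coarse-moduli-dual}). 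If one insists on the literal statement for arbitrary exact three-term sequences, the argument should instead be run directly on $\pi_0$ and $\pi_1$, using the description of $\pi_1(\mc{A}^D)$ and $\pi_0(\mc{A}^D)$ in terms of $\Hom$ and $\ExtSh^1$ into $\mb{G}_m$ (cf.\ Example \ref{eg:dual-BK} and \cite{Bro14,SGA4c}), which is the more elaborate bookkeeping your fall-back gestures at.
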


One might fear that the hypotheses of Proposition \ref{p:autoequivalence} are too restrictive to apply to any interesting examples. The following proposition, together with reflexivity of abelian varieties and Example \ref{eg:dual-BK} proves that we need not worry:

\begin{prop}\label{p:local-reflexive}
Suppose that a commutative group stack $\mc{A}$ over $X$ is locally isomorphic to a product of reflexive commutative group stacks. Then $\mc{A}$ is a reflexive commutative group stack.
\end{prop}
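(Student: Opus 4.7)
The plan is to combine two ingredients: Cartier duality is compatible with restriction to the cover, and finite products of reflexive commutative group stacks are again reflexive. Since being an isomorphism of stacks is a local property, these together suffice.

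First I would verify compatibility of shifted Cartier duality with pullback in the Grothendieck topology on $X$. Given any morphism $U\to X$ in the site, the definition $\mc{A}^D=\HomSt(\mc{A},B\mb{G}_m)$ immediately gives a canonical isomorphism $(\mc{A}^D)|_U \simeq (\mc{A}|_U)^D$, since the $V$-points of either side (for $V\to U$) are computed as maps of commutative group stacks $\mc{A}\times_X V \to B\mb{G}_m\times_X V$. Iterating, $(\mc{A}^{DD})|_U \simeq (\mc{A}|_U)^{DD}$, and the canonical biduality morphism $\eta_{\mc{A}}:\mc{A}\to\mc{A}^{DD}$ is natural, so $\eta_{\mc{A}}|_U = \eta_{\mc{A}|_U}$.

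Next I would show that reflexivity is preserved under finite products. For commutative group stacks, finite products coincide with finite coproducts (biproducts), and so, as in any additive 2-category, there is a canonical equivalence
\[
    \HomSt\Bigl(\prod_{j=1}^n \mc{B}_j,\, B\mb{G}_m\Bigr) \;\simeq\; \prod_{j=1}^n \HomSt(\mc{B}_j,B\mb{G}_m),
\]
i.e.\ $\bigl(\prod_j \mc{B}_j\bigr)^D\simeq\prod_j \mc{B}_j^D$. Applying this twice, and using that each $\mc{B}_j\to \mc{B}_j^{DD}$ is an equivalence, one sees that $\prod_j \mc{B}_j\to \bigl(\prod_j \mc{B}_j\bigr)^{DD}$ is an equivalence.

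Now to finish: by hypothesis there is a cover $\{U_i\to X\}$ such that each $\mc{A}|_{U_i}$ is equivalent to a finite product of reflexive commutative group stacks, hence reflexive by the previous paragraph. Thus $\eta_{\mc{A}|_{U_i}}$ is an equivalence for every $i$. By the compatibility of $\eta$ with restriction, this says exactly that $\eta_{\mc{A}}|_{U_i}$ is an equivalence for every $i$. Being an equivalence of stacks is a local property on the base, so $\eta_{\mc{A}}$ is an equivalence, proving $\mc{A}$ is reflexive.

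The only mildly subtle point is the product step: one must be careful that the equivalence $\HomSt(\mc{B}_1\times\mc{B}_2,B\mb{G}_m)\simeq \HomSt(\mc{B}_1,B\mb{G}_m)\times\HomSt(\mc{B}_2,B\mb{G}_m)$ really holds at the level of commutative group stacks (not merely sets of isomorphism classes). This follows from the biproduct structure on the 2-category of commutative group stacks, which in turn is visible via the Deligne description of Remark \ref{rk:cx-of-sheaves}: products of complexes in $\Ch^{[-1,0]}(X)$ are biproducts, and $\HomSt$ out of a direct sum splits as a product in the standard way. Beyond that, the argument is essentially formal.
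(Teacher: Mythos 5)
Your proof is correct and follows essentially the same route as the paper, whose entire argument is that the biduality map $\mc{A}\to(\mc{A}^D)^D$ is an isomorphism if and only if it is one locally on $X$, delegating to the references of Donagi--Pantev and Campbell the compatibilities (duality commutes with restriction, finite products of reflexive stacks are reflexive) that you spell out explicitly. The only implicit point worth noting is that the product in the hypothesis is a finite one, which is all the paper ever uses and exactly what your biproduct argument covers.
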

\begin{proof}
The canonical map $\mc{A}\to(\mc{A}^D)^D$ is an isomorphism if and only if it is an isomorphism locally on $X$ -- but this is exactly our hypothesis (cf.\ \cite[Prop.\ A.6]{DonPan08} and \cite[Appendix A]{CZ17}).
\end{proof}

\begin{eg}\label{eg:BunT-dual}
Let $T$ be an algebraic torus, and let $X$ be a smooth, projective, connected curve over $k$. The moduli stack of $T$-bundles on $X$ is the commutative group stack $\BunSt_T(X)=\MapSt(X,BT)$. Denote by $\Bun_T(X)$ the corresponding coarse moduli space, and by $\BunSt^0_T(X)$ and $\Bun^0_T(X)$ the corresponding neutral components. These are all commutative group stacks, with the following shifted Cartier duals:
\begin{align}
	\BunSt_T(X)^D 		&= \BunSt_{^LT}(X)	\label{eq:BunSt-dual}	\\
	\BunSt_T^0(X)^D 	&= \Bun_{^LT}(X)   	\label{eq:BunSt0-dual}	\\
	\Bun_T^0(X)^D 		&= \Bun_{^LT}^0(X)	\label{eq:Bun0-dual}
\end{align}
\end{eg}


\subsection{Higgs bundles and cameral covers}\label{ss:higgs-cameral}

Fix a Riemann surface (or complex smooth projective algebraic curve) $C$, which I will often assume to have genus $>1$, and denote by $K_C\to C$ the canonical bundle of $C$. Let $G$ be a complex reductive algebraic group. The following (standard) notion of a Higgs bundle is attributable to Hitchin \cite{Hit87a,Hit87b}:
\begin{defn}\label{defn:valHiggs}
A \emph{$K_C$-valued $G$-Higgs bundle on $C$} is a pair $(E,\varphi)$, where
\begin{itemize}
	\item $E\to C$ is a holomorphic $G$-bundle, and 
	\item $\varphi \in H^0(C;\ad(E)\tens K_C)$, i.e.\ $\varphi$ is a global section of the bundle $\ad(E)\tens K_C$.
\end{itemize}
Here, $\ad(E)$ is the vector bundle associated to $E$ via the adjoint representation of $G$ on $\mf{g}=\Lie(G)$.
\end{defn}
\begin{remark}\label{rk:L-val-Higgs}
By replacing $K_C$ with any other line bundle $L\to C$, we obtain the more general notion of an \emph{$L$-valued $G$-Higgs bundle on $C$}.
\end{remark}
Consider the Lie algebra $\mf{g}$ of $G$ as a $G\times\mb{G}_m$-module, via the adjoint action of $G$ and the scaling action of the multiplicative group. A map from a scheme $X$ to the stack quotient $[\mf{g}/G\times\mb{G}_m]$ is given by the data of
\begin{enumerate}[(1)]
	\item a principal $G$-bundle $E$ on $X$, and
	\item a line bundle (i.e.\ principal $\mb{G}_m$-bundle) $L$ on $X$, together with
	\item a section of the vector bundle $\ad(E)\tens L \to X$.
\end{enumerate}
Hence the stack $\MapSt(X,[\mf{g}/G\times\mb{G}_m])$ is exactly the moduli stack of $G$-Higgs bundles on $X$ with values in \emph{some} line bundle. Composition of such a map with the natural projection map $[\mf{g}/G\times\mb{G}_m] \to B\mb{G}_m$ classifies the line bundle of values for the corresponding Higgs bundle.

\begin{defn}\label{def:Higgs-stack}
Denote by $\HiggsSt_G(X,L)$ the \emph{moduli stack of $G$-Higgs bundles on $X$ with values in $L$}, i.e.\ the substack of $\MapSt(X,[\mf{g}/G\times\mb{G}_m])$ whose projection to $\MapSt(X,B\mb{G}_m)=\BunSt_{\mb{G}_m}(X)$ classifies the line bundle $L$.
\end{defn}

\begin{remark}\label{rk:coarse-Higgs-moduli}
There is also a moduli space of semistable $L$-valued $G$-Higgs bundles on $C$ \cite{Hit87a,Nit91,Simp94II}, which I will denote by $\Higgs_G(C,L)$. There is a natural open substack of semistable $L$-valued $G$-Higgs bundles on $C$, $\HiggsSt^{ss}_G(C,L) \subset \HiggsSt_G(C,L)$ which maps to this coarse moduli space, and whose image under the Hitchin map (Definition \ref{def:Hitchin-map}) is contained in the very regular locus (Definition \ref{def:very-reg}).
\end{remark}


\subsubsection{The Hitchin fibration and sections}\label{sss:Hitchin-fib-sec}

The moduli of Higgs bundles admits a canonical description as a fibration, admitting a canonical class of sections. Let us briefly recall these structures, and how they are induced from the representation theory of $G$.

Fix the data of a maximal torus and a Borel subgroup $H \hookrightarrow B \subset G$. This determines a set of simple roots $S$ in the root system $R$ of the Lie algebra $\mf{g}$. Denote the root space of $\mf{g}$ corresponding to $\alpha\in R$ by $\mf{g}_\alpha$, and choose a nonzero vector $x_\alpha\in \mf{g}_\alpha$ for each simple $\alpha\in S$. For each simple root $\alpha$ there is then a unique element $x_{-\alpha}\in\mf{g}_{-\alpha}$ such that $[x_\alpha,x_{-\alpha}]=\alpha^\vee$, the coroot corresponding to $\alpha$. Then the elements $x_+ = \sum_{\alpha\in S}x_\alpha$ and $x_- = \sum_{\alpha\in S}x_{-\alpha}$ are regular\footnote{Recall that the \emph{locus of regular elements} $\mf{g}^{reg}\subset\mf{g}$ is the locus of elements whose centralisers have minimal possible dimension $\dim(Z_G(x))=\rank(G)$.} nilpotent elements of $\mf{g}$ \cite{Ko59}.

Consider the adjoint action of $G$ on $\mf{g}$, and the induced action of the Weyl group $W := W_G(H)=N_G(H)/H$ on $\mf{h}=\Lie(H)$. These induce actions of $G$ and $W$ on the algebras $\bC[\mf{g}]$ and $\bC[\mf{h}]$ respectively, and we define $\mf{c}:=\Spec(\bC[\mf{h}]^W)=\mf{h}/W$. We then have the following theorem of Kostant \cite{Ko63} and Chevalley:

\begin{thm}\label{thm:Kostant-section}
\begin{enumerate}
	\item[]
	\item The restriction map $\bC[\mf{g}]\to\bC[\mf{h}]$ induces an isomorphism on the subalgebras of invariants $\bC[\mf{g}]^G \stackrel{\sim}{\to}\bC[\mf{h}]^W$. Moreover, $\bC[\mf{h}]^W$ is a polynomial algebra generated by homogeneous elements $P_1,\ldots,P_r$ of degrees $m_1+1,\ldots,m_r+1$.
	\item The \emph{Chevalley} or \emph{characteristic polynomial map} $\chi:\mf{g}\to\mf{c}$, induced by the above isomorphism, is $\mb{G}_m$-equivariant with respect to the weight one action of $\mb{G}_m$ on $\mf{g}$, and the action on $\mf{c}$ defined by
		\[	\lambda\cdot(P_1,\ldots,P_r)=(\lambda^{m_1+1}P_1,\ldots,\lambda^{m_r+1}P_r).	\]
	\item The restriction of $\chi$ to the regular locus $\mf{g}^{reg}\subset\mf{g}$ is smooth, and each fibre is a single $G$-orbit.
	\item Let $\mf{g}^{x_+}\subset\mf{g}$ denote the Lie algebra centraliser of $x_+$ (i.e.\ the kernel of $\ad(x_+)$ acting on $\mf{g}$). Then the affine subspace $x_- + \mf{g}^{x_+}$ is contained in the regular locus $\mf{g}^{reg}$, and the Chevalley map restricts to an isomorphism $x_- + \mf{g}^{x_+} \cong \mf{c}$.
\end{enumerate}
\end{thm}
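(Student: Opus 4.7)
The plan is to attack the four parts in the order they are stated, since each one leans on the previous, and to keep the deepest structural input (Chevalley's theorem on pseudoreflection groups and the principal $\mathfrak{sl}_2$) for the steps where it is actually needed.

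For part (1), I would prove the Chevalley restriction isomorphism $\mb{C}[\mf{g}]^G \xrightarrow{\sim} \mb{C}[\mf{h}]^W$ in two moves. First, density: every semisimple element of $\mf{g}$ is $G$-conjugate to one in $\mf{h}$, and the semisimple locus is dense in $\mf{g}$, so restriction to $\mf{h}$ is injective on $G$-invariants, and the image is automatically $W$-invariant since $W = N_G(H)/H$ acts through the $G$-action. For surjectivity I would use the standard averaging argument: given a $W$-invariant polynomial $p$ on $\mf{h}$, the extension by $G$-orbits is well-defined on the regular semisimple locus and extends to $\mf{g}$ by Hartogs (the complement has codimension $\geq 2$ since it is cut out by the discriminant). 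That $\mb{C}[\mf{h}]^W$ is itself a polynomial algebra is Chevalley–Shephard–Todd applied to $W$ acting on $\mf{h}$ by reflections; the degrees $m_i+1$ are by definition the fundamental degrees of the Weyl group.

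Part (2) is then immediate: the generators $P_i$ pulled back to $\mf{g}$ are homogeneous of degree $m_i + 1$, so under the weight-one scaling $x \mapsto \lambda x$ they transform by $\lambda^{m_i+1}$.

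For part (3) I would use a dimension count. For $x \in \mf{g}^{reg}$ the $G$-orbit $G\cdot x$ has dimension $\dim G - \rank G = \dim \mf{g} - \dim \mf{c}$, which already equals the generic fiber dimension of $\chi$. Smoothness of $\chi|_{\mf{g}^{reg}}$ amounts to showing the differential $d\chi_x: \mf{g} \to T_{\chi(x)}\mf{c}$ is surjective at every regular $x$; this follows because the kernel of $d\chi_x$ contains $[\mf{g},x]$ (by $G$-invariance of $\chi$) and $\dim[\mf{g},x] = \dim \mf{g} - \rank \mf{g}$ exactly when $x$ is regular, forcing equality and surjectivity onto $\mf{c}$. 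To see each regular fiber is a single orbit, combine the orbit-dimension count with the fact that any two regular elements with the same characteristic polynomial have conjugate semisimple parts and then conjugate Jordan decompositions.

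Part (4) is the substantive step and where I expect the main work to lie. The element $x_+$ is regular nilpotent, so choose an $\mathfrak{sl}_2$-triple $(x_+, h, x_-)$ containing it; this is the ``principal $\mathfrak{sl}_2$.'' Decompose $\mf{g}$ into weight spaces for $\ad(h)$, which gives a grading such that $\mf{g}^{x_+}$ sits in non-negative weights and has dimension $r = \rank \mf{g}$ (since $x_+$ is regular). In particular $\dim(x_- + \mf{g}^{x_+}) = \dim \mf{c}$. To show $x_- + \mf{g}^{x_+} \subset \mf{g}^{reg}$, I would introduce the rescaling $\mb{G}_m$-action $\lambda \cdot x := \lambda^2 \Ad(\rho(\lambda))x$, where $\rho(\lambda) = \exp(\log(\lambda) h/2)$; under this action $x_-$ is fixed and $\mf{g}^{x_+}$ is contracted to $0$ as $\lambda \to 0$, so every point of $x_- + \mf{g}^{x_+}$ flows to $x_-$, which is regular nilpotent; since the regular locus is open and $\mb{G}_m$-stable, the whole slice lies in $\mf{g}^{reg}$. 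Finally, to identify $\chi|_{x_- + \mf{g}^{x_+}}$ with $\mf{c}$, I would use the same $\mb{G}_m$-action: both source and target acquire compatible contracting $\mb{G}_m$-actions (on the target via the weighted action from part (2)), the restricted map is $\mb{G}_m$-equivariant, and the differential at $x_-$ can be checked to be an isomorphism by identifying both $T_{x_-}(x_- + \mf{g}^{x_+}) = \mf{g}^{x_+}$ and $T_0 \mf{c}$ with graded pieces that match dimension-by-weight (the exponents of $W$ match the weights of $\ad(h)$ on $\mf{g}^{x_+}$). An equivariant map between affine spaces with contracting $\mb{G}_m$-actions that is an isomorphism on tangent spaces at the fixed point is a global isomorphism, which concludes the proof.
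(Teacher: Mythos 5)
The paper does not prove this statement at all: it is quoted as a classical theorem of Chevalley and Kostant with a citation to \cite{Ko63}, so your sketch has to be judged against the standard proofs in the literature. Measured that way, the central problem is that in both (3) and (4) you assume precisely the hard point of Kostant's theorem. In (3), $G$-invariance of $\chi$ gives $[\mf{g},x]\subseteq\ker d\chi_x$, and for regular $x$ one has $\dim[\mf{g},x]=\dim\mf{g}-r$; but this only bounds the rank of $d\chi_x$ \emph{above} by $r$, so nothing ``forces equality.'' The genuine content is the reverse inequality, i.e.\ that $dP_1,\ldots,dP_r$ are linearly independent at every regular point (Kostant's characterisation of $\mf{g}^{reg}$), and your sketch supplies no argument for it. (One soft repair: once you know every fibre of $\chi|_{\mf{g}^{reg}}$ is a single orbit of dimension $\dim\mf{g}-r$, miracle flatness plus smoothness of orbits gives smoothness of the morphism without touching $d\chi$ directly; but that is not the route you wrote.) The same gap reappears in (4): the contracting $\mb{G}_m$-action correctly reduces the claim to showing that $d(\chi|_{x_-+\mf{g}^{x_+}})$ is an isomorphism at the fixed point $x_-$, and your final lemma (an equivariant map of affine spaces with contracting actions that is an isomorphism on tangent spaces at the fixed point is a global isomorphism) is true and standard. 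But ``the graded pieces match dimension-by-weight'' only says the two tangent spaces are abstractly isomorphic as graded vector spaces; it does not show the differential is injective. That injectivity is \emph{equivalent} to the linear independence of the $dP_i$ at a regular nilpotent element, i.e.\ to the very statement left unproved in (3), so as written the argument is circular at its core.

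Two further points. In (1), the Hartogs step fails: the complement of the regular semisimple locus is cut out by the discriminant, hence is a hypersurface of codimension one, not $\geq 2$, so extension across it is not automatic. Either use the classical surjectivity argument (restrict the invariants $x\mapsto\tr_V(x^k)$ over representations $V$ and induct on dominant weights), or show the conjugation-extension of $p$ is locally bounded near the discriminant divisor and invoke the Riemann extension theorem, with homogeneity then giving polynomiality. Injectivity via density of semisimple elements, part (2), and your Jordan-decomposition argument that each regular fibre is a single orbit are fine. Finally, a cosmetic issue: with the paper's normalisation $[x_\alpha,x_{-\alpha}]=\alpha^\vee$, the pair $(x_+,x_-)$ need not complete to an $\mf{sl}_2$-triple; one should rescale the $x_{-\alpha}$ to get a principal triple and note that the resulting slice is conjugate to $x_-+\mf{g}^{x_+}$ under the maximal torus, after which your grading and contraction arguments (and the decomposition $\mf{g}=[\mf{g},x_-]\oplus\mf{g}^{x_+}$ they rely on) apply.
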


\begin{remark}\label{rk:Kostant-section}
The inverse to the Chevalley map on $x_- + \mf{g}^{x_+}$ is called the \emph{Kostant section}, and will be denoted by $\kappa$:
\begin{equation}\label{eq:Kostant-section}
\begin{tikzcd}[column sep = small]
	\mf{g}^{reg}\ar[r, hook]	& \mf{g}\ar[d,"\chi"]	\\
						& \mf{c}\ar[ul,"\kappa"]
\end{tikzcd}
\end{equation}
\end{remark}

For $L\to X$ a line bundle consider the associated $\mf{c}$-bundle on $X$
\begin{align}\label{eq:twist-c}
	\mf{c}_L := \mf{c}\times^{\mb{G}_m} (L-0) = (L\tens\mf{h})/W.
\end{align}

\begin{defn}\label{def:Hitchin-base-functor}
$\HitchSt_{\mf{g}}(X,L)$ is the functor whose $S$-points are given by
\begin{align}\label{eq:Hitchin-base-functor}
	\Hom(S,\HitchSt_{\mf{g}}(X,L))=\Hom_X(S\times X,\mf{c}_L).
\end{align}
\end{defn}

Since by definition $\mf{c}=\mf{h}/W=\mf{g}\git G$, the Chevalley map factors through the stack $[\mf{g}/G]$. Since it is $\mb{G}_m$-equivariant, it further descends to give maps $[\mf{g}/G\times\mb{G}_m] \to [\mf{c}/\mb{G}_m]$ and
\begin{align}\label{eq:Chev-descent-2}
	\MapSt(X,[\mf{g}/G\times\mb{G}_m]) \to \MapSt(X,[\mf{c}/\mb{G}_m]).
\end{align}

\begin{defn}\label{def:Hitchin-map}
The restriction of \eqref{eq:Chev-descent-2} to the subfunctor classifying the line bundle $L$ is the \emph{Hitchin map}
\begin{align}\label{eq:Hitchin-map}
	h_L:	\HiggsSt_G(X,L)\to \HitchSt_{\mf{g}}(X,L).
\end{align}
\end{defn}

\begin{defn}\label{def:Hitchin-section}
The Kostant section \eqref{eq:Kostant-section} induces a section
\begin{align}\label{diag:Hitchin-section}
\begin{tikzcd}
	\HiggsSt_G(X,L)\ar[d,"h_L"]	\\
	\HitchSt_{\mf{g}}(X,L)\ar[u,bend left,"s_L"]
\end{tikzcd}
\end{align}
known as the \emph{Hitchin section}. This section may depend upon a choice of square-root $L^{1/2}$ for $L$; see \cite{Hit92} for details.
\end{defn}


\subsubsection{Cameral covers}\label{sss:cameral-covers}

\begin{defn}\label{def:Hitchin-base}
The \emph{Hitchin base} $\Hitch_{\mf{g}}(X,L)$ is the $\bC$-points of the Hitchin base functor, i.e.\ $\Hitch_{\mf{g}}(X,L)=\Hom_X(X,\tot(\mf{c}_L)) = H^0(X;(L\tens\mf{h})/W)$.
\end{defn}

The Hitchin base $\Hitch_{\mf{g}}(X,L)$ is an affine space, and it represents the functor $\HitchSt_{\mf{g}}(X,L)$. Moreover, it parametrises the \emph{$L$-valued cameral covers of $X$}, which we define after \cite{DonPan12} as follows:

\begin{defn}\label{def:cameral-cover}
A \emph{cameral cover} of $X$ is a scheme $\tilde X$ together with a map $p:\tilde X\to X$ and a $W$-action along the fibres of $p$ satisfying:
\begin{enumerate}
	\item $p$ is finite and flat over $X$.
	\item As an $\mc{O}_X$-module with $W$-action $p_*\mc{O}_{\tilde X}$ is locally isomorphic to $\mc{O}_X\tens\bC[W]$.
	\item Locally with respect to the \'etale (or analytic) topology on $X$, $\tilde X$ is a pullback of the $W$-cover $\mf{h}\to\mf{h}/W$.
\end{enumerate}
An \emph{$L$-valued cameral cover} of $X$ is a cameral cover $p:\tilde{X}\to X$ together with a $W$-equivariant map $\tilde{\sigma}:\tilde{X}\to \tot(L\tens\mf{h})$.
\end{defn}


\subsection{The group scheme of regular centralisers}\label{ss:J}

Following the ideas of \cite{DonGai02,Ngo10}, I will now review a uniform approach to understanding the fibres of the Hitchin map \eqref{eq:Hitchin-map} via the ``group scheme of regular centralisers''.


\subsubsection{Regular centralisers over the adjoint quotient}\label{sss:J-c}

Consider first the \emph{group scheme of centralisers} $I\to\mf{g}$ defined by
\begin{align}\label{eq:I-scheme}
	I = \{(x,g)\in\mf{g}\times G \,|\, \Ad_g(x)=x\}\subset \mf{g}\times G.
\end{align}
This map is very poorly behaved: observe for instance that it interpolates between the fibre of a regular semisimple element, which is an algebraic torus of dimension $r$, and the fibre over 0, which is a copy of $G$. When restricted to the regular locus, however, $I^{reg}$ becomes a smooth commutative group scheme of relative dimension $r$, whose generic fibre (over a semisimple element) is an algebraic torus.

Recalling that the Kostant section $\kappa$ is valued in $\mf{g}^{reg}\subset\mf{g}$, we may make the following definition.

\begin{defn}\label{def:J-scheme}
The \emph{group scheme of regular centralisers $J$} is the pullback
\begin{align}\label{eq:J-scheme}
	J = \kappa^* I^{reg} \to \mf{c}.
\end{align}
\end{defn}

Since $I^{reg}$ is a smooth commutative group scheme, so is $J$. Consider the pullback by the Chevalley map $\chi^*J \to \mf{g}$: by construction this is equipped with an isomorphism over the regular locus $(\chi^* J)|_{\mf{g}^{reg}} \stackrel{\sim}{\to} I|_{\mf{g}^{reg}}$, and this extends uniquely to a homomorphism of group schemes $\chi^* J \to I$ since $J$ is smooth, $I$ is affine, and $\chi^*J \setminus \chi^* J |_{\mf{g}^{reg}}$ is closed of high codimension \cite{Ngo10}. $J$ descends to a group scheme over $[\mf{c}/\mb{G}_m]$, and in fact $[\mf{g}^{reg}/G]\to \mf{c}$ is a $J$-gerbe, trivialised by the Kostant section \cite{Ngo06}.


\subsubsection{Regular centralisers over the Hitchin base}\label{sss:J-Hitch}

Let us now consider a Picard stack on the Hitchin base $\Hitch_{\mf{g}}(X,L)$, which we define following \cite{Ngo06}. Recall that a point $\sigma:S\to\Hitch_{\mf{g}}(X,L)$ is equivalent to a map
\begin{align}\label{eq:Hitchin-S-point}
	h_\sigma:X\times S \to [\mf{c}/\mb{G}_m]
\end{align}
which lies over the map $X\to B\mb{G}_m$ that classifies the line bundle $L\to X$. By pulling back the smooth commutative group scheme $J\to [\mf{c}/\mb{G}_m]$ along $h_\sigma$, we obtain a smooth family of commutative group schemes $J_\sigma = h_\sigma^*J \to X\times S$. 

Now consider the category of $J_\sigma$-torsors on $X\times S$, $\Tors_{J_\sigma}(X\times S)$. The assignment $\sigma\mapsto\Tors_{J_\sigma}(X\times S)$ defines a Picard stack on $\Hitch_{\mf{g}}(X,L)$, denoted $\TorsSt_J$.

\begin{prop}\label{prop:J-tors-Higgs-action}
There is an action of $\Tors_{J_\sigma}(X\times S)$ on the fibre $\HiggsSt_G(X,L)_\sigma:=h_L(S)\inv(\sigma)$ of the ($S$-points of the) Hitchin map.
\end{prop}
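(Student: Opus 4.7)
The plan is to construct the action using the Ngo homomorphism $\chi^*J \to I$ together with standard torsor twisting.

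First I would unpack the automorphism sheaf of an $S$-point of the Hitchin fibre. An $S$-point $(E,\varphi)$ of $\HiggsSt_G(X,L)_\sigma$ is equivalently a morphism $f:X\times S \to [\mf{g}/G\times\mb{G}_m]$ lifting $h_\sigma:X\times S \to [\mf{c}/\mb{G}_m]$. The inertia group scheme of $[\mf{g}/G]$ relative to $\mf{c}$ is exactly the centraliser group scheme $I$, so $\underline{\Aut}_{X\times S}(E,\varphi) \cong f^*I$. Pulling back the homomorphism $\chi^*J\to I$ recalled in Section~\ref{sss:J-c} along $f$ yields a canonical homomorphism of smooth group schemes
\[
    \alpha_\sigma:\; J_\sigma \longrightarrow \underline{\Aut}(E,\varphi)
\]
on $X\times S$, covering the identity on the characteristic $\sigma$.

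Next I would use $\alpha_\sigma$ to perform the twist. Given a $J_\sigma$-torsor $P$ on $X\times S$, push $P$ forward along $\alpha_\sigma$ to an $\underline{\Aut}(E,\varphi)$-torsor. \'Etale-locally on $X\times S$ one can simultaneously trivialise $P$ and $E$, converting the data of $P$ into a $J_\sigma$-valued cocycle; $\alpha_\sigma$ sends it to an $\underline{\Aut}(E)$-valued cocycle whose values fix $\varphi$. Regluing $E$ by the resulting cocycle yields a new $G$-torsor $E'$, and because the cocycle preserves $\varphi$ the Higgs field descends to $\varphi'\in H^0(\ad(E')\tens L)$ still satisfying $\chi(\varphi')=\sigma$. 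The pair $(E',\varphi')$ is the desired twist $P\cdot(E,\varphi)$, and it lives in the same Hitchin fibre.

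Finally, the action axioms---compatibility with Baer sum of $J_\sigma$-torsors and triviality of the twist by the trivial torsor---reduce to the fact that $\alpha_\sigma$ is a homomorphism and that the twisting is functorial in the cocycle; functoriality in $S$ is automatic from the naturality of all the constructions in base change. I do not expect a serious obstacle: the substantive content is the input of the homomorphism $\chi^*J\to I$, which extends uniquely off the regular locus by the codimension/affineness argument of \cite{Ngo10}. Conceptually, the action encodes the $J$-gerbe structure of $[\mf{g}^{reg}/G]\to\mf{c}$ on the open substack of everywhere-regular Higgs bundles, and $\chi^*J\to I$ is what propagates this action to all of the Hitchin fibre (consistent with the restriction away from the discriminant noted in Important Remark!~\ref{rk:good-locus}).
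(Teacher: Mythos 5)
Your construction is correct and is essentially the argument the paper is reviewing: the proposition is quoted from the Ngô/Donagi--Gaitsgory framework, and the intended proof is exactly the one you give, namely pulling back the canonical homomorphism $\chi^\ast J\to I$ along the classifying map of $(E,\varphi)$ to get $J_\sigma\to\underline{\Aut}(E,\varphi)$ and twisting by $J_\sigma$-torsors, with the Picard-groupoid axioms following from functoriality of the twist. Only your closing parenthetical is slightly off: the action itself exists over the whole Hitchin base, and the restriction away from the discriminant in Remark \ref{rk:good-locus} is needed only for simple transitivity as in Proposition \ref{prop:Higgs-J-gerbe}, not for Proposition \ref{prop:J-tors-Higgs-action}.
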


Through this action we may interpret the moduli of Higgs bundles $\HiggsSt_G(X,L)$ as a partial compactification of $\TorsSt_J$ as follows. Write $\HiggsSt_G(X,L)^{reg}$ for the subfunctor classifying those maps $h_{E,\phi}:X\times S \to [\mf{g}\tens L / G]$ which factor through the open substack $[\mf{g}^{reg}\tens L / G]$. Assume that $L$ admits a square root, so that the Hitchin fibration admits a Kostant section. Since by construction the Kostant section takes values in the regular locus, we have the following \cite[Prop 4.3.3]{Ngo10}:

\begin{prop}\label{prop:regular-Higgs-locus}
$\HiggsSt_G(X,L)^{reg}$ is open in $\HiggsSt_G(X,L)$ with non-empty fibres over $\Hitch_{\mf{g}}(X,L)$. Moreover, $\TorsSt_J$ acts on this locus simply-transitively.
\end{prop}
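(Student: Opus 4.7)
The plan is to address the three assertions in order, with simple-transitivity carrying the real content. For openness, observe that $\mf{g}^{reg}\subset\mf{g}$ is $G\times\mb{G}_m$-invariant and Zariski open (Theorem \ref{thm:Kostant-section}(3)), so $[\mf{g}^{reg}\otimes L/G]\subset[\mf{g}\otimes L/G]$ is an open substack. Given an $S$-point of $\HiggsSt_G(X,L)$ classified by $h:X\times S\to[\mf{g}\otimes L/G]$, the locus $Z\subset X\times S$ on which $h$ fails to land in the regular open substack is closed, and since $X$ is a proper curve, the image of $Z$ in $S$ is closed. Its complement is exactly the locus over which the Higgs bundle is everywhere regular, so $\HiggsSt_G(X,L)^{reg}\hookrightarrow\HiggsSt_G(X,L)$ is open. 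For non-emptiness of fibres, the hypothesis that $L$ has a square root makes the Hitchin section $s_L$ of Definition \ref{def:Hitchin-section} available, and since $\kappa$ takes values in $\mf{g}^{reg}$ by Theorem \ref{thm:Kostant-section}(4), $s_L$ factors through $\HiggsSt_G(X,L)^{reg}$, producing a point over every $\sigma\in\Hitch_{\mf{g}}(X,L)$.

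For the simple-transitive action, I would invoke the key structural fact (recalled at the end of Section \ref{sss:J-c}) that $[\mf{g}^{reg}/G]\to\mf{c}$ is a gerbe banded by $J$, trivialised by the Kostant section. This statement is $\mb{G}_m$-equivariant, so after twisting by $L$ and descending it becomes the statement that $[(\mf{g}^{reg}\otimes L)/G]$ is a $J$-gerbe over the appropriate component of $\MapSt(-,[\mf{c}/\mb{G}_m])$ that classifies the line bundle $L$. Given an $S$-point $\sigma:S\to\Hitch_{\mf{g}}(X,L)$ with classifying map $h_\sigma:X\times S\to[\mf{c}/\mb{G}_m]$, pulling back the $J$-gerbe along $h_\sigma$ yields a $J_\sigma$-gerbe $\mc{G}_\sigma\to X\times S$. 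By definition, an $S$-point of $\HiggsSt_G(X,L)^{reg}$ lying over $\sigma$ is precisely a section (equivalently, trivialisation) of $\mc{G}_\sigma$. Since the Kostant section provides at least one such trivialisation, the set of all trivialisations forms a torsor over the group of $J_\sigma$-torsors on $X\times S$, which is exactly the fibre of $\TorsSt_J$ at $\sigma$. A compatibility check then identifies this action with the one of Proposition \ref{prop:J-tors-Higgs-action}.

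The main technical obstacle lies not in the scheme-theoretic openness nor in exhibiting the Kostant section, but in propagating the $J$-gerbe statement through the $\mb{G}_m$-twisting by $L$ and across the family $X\times S\to S$. Concretely, one must check that the isomorphism $\chi^* J|_{\mf{g}^{reg}}\stackrel{\sim}{\to} I|_{\mf{g}^{reg}}$ (from Section \ref{sss:J-c}) descends equivariantly to the $L$-twisted adjoint quotient, and that the trivialisation by $\kappa$ is preserved after twisting, so that pulling back along $h_\sigma$ genuinely produces a $J_\sigma$-gerbe with the claimed trivialisation. Once this bookkeeping is in place, the three assertions of the proposition fall out essentially formally.
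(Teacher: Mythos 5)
Your proposal is correct and follows essentially the same route as the paper, which states the result by delegating to \cite[Prop.\ 4.3.3]{Ngo10} after observing that the Kostant section lands in the regular locus. The ingredients you use -- openness via properness of $X$ together with openness of the regular locus in $\mf{g}$, non-emptiness of fibres via the Hitchin/Kostant section (under the square-root hypothesis on $L$), and identification of the regular fibre over $\sigma$ with the groupoid of trivialisations of the pulled-back $J_\sigma$-gerbe, which is a torsor under $\Tors_{J_\sigma}(X\times S)$ -- are precisely those of the cited argument, resting on the fact recalled at the end of Section \ref{sss:J-c} that $[\mf{g}^{reg}/G]\to\mf{c}$ is a $J$-gerbe trivialised by the Kostant section.
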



\subsubsection{The Hitchin fibration away from the discriminant locus}\label{sss:discr-loc}

Consider the branch locus of the generically \'etale Galois $W$-cover $\mf{h}\to\mf{c}$, denoted by $\mf{D}_{\mf{g}}$. This may be identified with the divisor given by vanishing of the discriminant
\begin{align}\label{eq:discr}
	\prod_{\alpha\in R} d\alpha ,
\end{align}
where the product is over the roots of $G$.\footnote{Since $\alpha: H\to\mb{G}_m$, $d\alpha: \mf{h}\to\mb{G}_a$.} We adopt the follow definition after \cite{Ngo06}:

\begin{defn}\label{def:very-reg}
Call $\sigma\in\Hitch_{\mf{g}}(X,L)$ \emph{very regular} if the image of the associated map $h_\sigma: X\to \mf{c}_L$ is transverse to the divisor $\mf{D}_L = \mf{D}_{\mf{g}}\times^{\mb{G}_m} L$.
\end{defn}

\begin{remark}\label{rk:very-reg-geom}
Geometrically, Definition \ref{def:very-reg} means that the associated cameral cover $p_\sigma:\tilde{X}_\sigma \to X$ has simple Galois ramification, i.e.\ all of the ramification points of $p_a$ have ramification index one \cite{DonPan12}. Moreover in this situation $\tilde{X}$ is smooth \cite{Ngo10}.
\end{remark}

If $L$ is very ample then the very regular locus is open and dense in $\Hitch_{\mf{g}}(X,L)$ \cite{Ngo06}. Denote the complement of this locus by $\Delta_{\mf{g}}$ (or just $\Delta$ if $\mf{g}$ is clear from context), so that the very regular locus is $\Hitch_{\mf{g}}(X,L)\setminus \Delta$.

\begin{prop}\cite[Prop 4.3]{Ngo06}\label{prop:Higgs-J-gerbe}
For $\sigma\in\Hitch_{\mf{g}}(X,L)\setminus\Delta$, the groupoid $\Tors_{J_\sigma}(X\times S)$ acts simply-transitively on $\HiggsSt_G(X,L)_\sigma$; i.e.\ $\HiggsSt_G(X,L)_\sigma$ is a $J_\sigma$-gerbe. Moreover, if the Hitchin section exists, it trivialises this gerbe.
\end{prop}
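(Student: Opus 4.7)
The plan is to reduce the statement to Proposition \ref{prop:regular-Higgs-locus} by proving that over the very regular locus the full Hitchin fibre coincides with its regular sublocus:
\[
\HiggsSt_G(X,L)_\sigma = \HiggsSt_G(X,L)^{reg}_\sigma \quad\text{for } \sigma\in\Hitch_{\mf{g}}(X,L)\setminus\Delta.
\]
Once this equality is in place, Proposition \ref{prop:regular-Higgs-locus} directly supplies the simply-transitive action of $\Tors_{J_\sigma}(X\times S)$ on $\HiggsSt_G(X,L)_\sigma$, which is precisely the $J_\sigma$-gerbe assertion. The moreover clause then follows for free: by Theorem \ref{thm:Kostant-section}(4) the Kostant section $\kappa$ is valued in $\mf{g}^{reg}$, so the Hitchin section $s_L(\sigma)$ (when it exists) produces a Higgs bundle in $\HiggsSt_G(X,L)^{reg}_\sigma$, which serves as a neutral object trivialising the gerbe.

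To prove the key equality, I would argue pointwise: for any $(E,\varphi)$ with $h_L(E,\varphi)=\sigma$ and any $x\in X$, the value $\varphi(x)\in\ad(E)_x\tens L_x$ must lie in the regular locus. On the open complement $U:=X\setminus h_\sigma\inv(\mf{D}_L)$ this is immediate from Theorem \ref{thm:Kostant-section}(3), since every fibre of $\chi$ over $\mf{c}\setminus\mf{D}_{\mf{g}}$ is a single regular $G$-orbit, so $\varphi|_U$ automatically factors through $[\mf{g}^{reg}\tens L/G]$. The remaining issue is confined to the finite ramification set $\{x_i\}=h_\sigma\inv(\mf{D}_L)$.

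At such a point $x_i$, the very regular hypothesis together with Remark \ref{rk:very-reg-geom} guarantees that $h_\sigma$ meets $\mf{D}_L$ transversely and that the cameral cover $p_\sigma:\tilde{X}_\sigma\to X$ has simple Galois ramification with smooth total space. \'Etale-locally near $x_i$ the image of $h_\sigma$ crosses a single root hyperplane $\{d\alpha=0\}$ with a simple zero. I would then pass to the cameral cover, where $\varphi$ pulls back (after fixing a local Borel reduction) to an $\mf{h}\tens L$-valued section $\tilde\varphi$ on the smooth curve $\tilde{X}_\sigma$, and perform a local analysis in the $\mf{sl}_2$-triple associated to $\alpha$. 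Smoothness of $\tilde{X}_\sigma$ together with the simple vanishing of $d\alpha\circ\tilde\varphi$ at the ramification point forces $\varphi(x_i)$ onto the unique regular orbit in its Chevalley fibre, ruling out a subregular value which would instead produce a higher-order contact of $h_\sigma$ with $\mf{D}_L$. This local ramification analysis is the main obstacle: it is the only step that genuinely uses the transversality hypothesis, while the rest of the argument is essentially formal once the machinery of $J$, the Kostant section, and Proposition \ref{prop:regular-Higgs-locus} is in place.
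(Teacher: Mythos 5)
The paper offers no proof of this statement---it is quoted verbatim from Ng\^o \cite{Ngo06}---so the comparison is with the standard argument, which is indeed the route you take: show that over the very regular locus the whole Hitchin fibre coincides with its regular part, then invoke the simply-transitive action of Proposition \ref{prop:regular-Higgs-locus}, with the Kostant/Hitchin section (Theorem \ref{thm:Kostant-section}(4)) providing the neutral object. That reduction and the ``moreover'' clause are fine. Two points in your key step, however, need repair. First, regularity away from the ramification points is not literally a consequence of Theorem \ref{thm:Kostant-section}(3); what you need is the separate (standard) fact that for $c\in\mathfrak{c}\setminus\mathfrak{D}_{\mathfrak{g}}$ the \emph{entire} fibre $\chi^{-1}(c)$ is regular: any $x$ in it has semisimple part $x_s$ regular semisimple, the centraliser of $x_s$ is a Cartan, and the nilpotent part of $x$, lying in that Cartan, must vanish.

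Second, and more seriously, your local analysis at a ramification point begins by pulling $\varphi$ back to the cameral cover and writing it, after a local Borel reduction, as an $\mathfrak{h}\otimes L$-valued section. The existence of such a reduction (abelianisation over the cameral cover) is precisely what uses regularity of $\varphi$ at the point in question, so as written the step is circular when $\varphi(x_i)$ is assumed non-regular. The clean way to close the gap avoids any reduction: at a generic point $c_0$ of the discriminant the fibre $\chi^{-1}(c_0)$ consists of exactly two orbits, the regular one and the orbit of a semisimple $y_0$ with $\alpha(y_0)=0$ for a single pair $\pm\alpha$. A local equation $F_0$ of $\mathfrak{D}_{\mathfrak{g}}$ near $c_0$ pulls back under $\chi$ to a $G$-invariant function $F$ on $\mathfrak{g}$ whose differential at $y_0$ vanishes identically: it kills $[\mathfrak{g},y_0]$ by $G$-invariance, kills $\mathfrak{h}$ because $F|_{\mathfrak{h}}$ is $\alpha^2$ times a unit and $\alpha(y_0)=0$, and kills $\mathfrak{g}_{\pm\alpha}$ by $H$-invariance of $dF_{y_0}$. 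Hence if $\varphi(x_i)$ were the non-regular value $y_0$, the function $F\circ\varphi$ would vanish to order at least $2$ at $x_i$, contradicting the transversality of $h_\sigma$ to $\mathfrak{D}_L$ (one checks this computation is insensitive to the twist by $L$). This forces $\varphi(x_i)$ into the regular orbit, after which your reduction to Proposition \ref{prop:regular-Higgs-locus} goes through exactly as you describe.
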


Finally, let $(X,L)=(C,K_C)$, and recall the coarse moduli space of semistable $K_C$-valued Higgs bundles $\Higgs_{G}(C,K_C)$. The above groupoid level analysis, together with the fact that Higgs bundles with very regular characteristics are stable, yields the following corollary upon passage to equivalence classes:

\begin{cor}\label{cor:H1J}
The Hitchin fibre $\Higgs_G(C,K_C)_\sigma$ lying over a very regular characteristic $\sigma\in\Hitch_{\mf{g}}(C,K_C)$ is a torsor for $H^1(C; J_\sigma)$, the group of equivalence classes of $J_\sigma$-torsors on $C$.
\end{cor}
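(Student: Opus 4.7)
The plan is to deduce Corollary \ref{cor:H1J} directly from Proposition \ref{prop:Higgs-J-gerbe} by passing from the stacky statement to equivalence classes, using the stability of very regular Higgs bundles to ensure that this passage is well-behaved.

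First, I would recall the content of Proposition \ref{prop:Higgs-J-gerbe}: for $\sigma\in\Hitch_{\mf{g}}(C,K_C)\setminus\Delta$, the Picard groupoid $\Tors_{J_\sigma}(C)$ acts simply-transitively on the fibre $\HiggsSt_G(C,K_C)_\sigma$. By definition, the group of equivalence classes of $J_\sigma$-torsors is $\pi_0\Tors_{J_\sigma}(C)=H^1(C;J_\sigma)$ (here one uses that $J_\sigma$ is a smooth commutative group scheme on $C$, so $J_\sigma$-torsors in the \'etale or analytic topology are classified by first cohomology). Hence, upon passing to $\pi_0$ on both sides of the simply-transitive action, we obtain an action of $H^1(C;J_\sigma)$ on the set of equivalence classes of objects of $\HiggsSt_G(C,K_C)_\sigma$ that is simply-transitive.

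Next, I would justify that the set of equivalence classes of $\HiggsSt_G(C,K_C)_\sigma$ is exactly the fibre of $\Higgs_G(C,K_C)\to \Hitch_{\mf{g}}(C,K_C)$ over $\sigma$. This is where the stability assertion enters: since $\sigma$ is very regular, every point in $\HiggsSt_G(C,K_C)_\sigma$ is stable (as invoked in the sentence preceding the corollary), so the stack maps to the coarse moduli space $\Higgs_G(C,K_C)$ by sending an object to its isomorphism class, and this identification is compatible with the Hitchin base. Thus $\pi_0\bigl(\HiggsSt_G(C,K_C)_\sigma\bigr)=\Higgs_G(C,K_C)_\sigma$, and by the previous paragraph this set carries a simply-transitive action of $H^1(C;J_\sigma)$, i.e.\ it is an $H^1(C;J_\sigma)$-torsor.

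The only subtlety I expect is bookkeeping with the fact that the simply-transitive action in Proposition \ref{prop:Higgs-J-gerbe} is stated at the level of $S$-points of a groupoid, whereas the corollary is a set-theoretic statement about $\mb{C}$-points; one must check that taking $\pi_0$ commutes with restricting to a fibre of the Hitchin map (which is immediate, since the action preserves the Hitchin map). The stability step is really the only nontrivial input, but it is given to us, so the corollary follows formally.
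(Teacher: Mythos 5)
Your proposal is correct and follows exactly the route the paper takes: the corollary is stated there as an immediate consequence of Proposition \ref{prop:Higgs-J-gerbe} together with the stability of Higgs bundles over very regular characteristics, obtained ``upon passage to equivalence classes.'' Your write-up simply makes explicit the identifications $\pi_0\Tors_{J_\sigma}(C)=H^1(C;J_\sigma)$ and $\pi_0\bigl(\HiggsSt_G(C,K_C)_\sigma\bigr)=\Higgs_G(C,K_C)_\sigma$ that the paper leaves implicit.
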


\label{b}


\section{Duality for quotients of the moduli of Higgs bundles}\label{s:stack-dual}

In this section I present new duality results relating moduli stacks of Higgs bundles for Langlands dual groups. These build on previous results by Donagi and Pantev, who proved Langlands duality of Hitchin systems for arbitrary reductive groups \cite{DonPan12}, and Hausel and Thaddeus who proved a duality result for Hitchin systems of type A equipped with an extra stacky structure (the ``gerbe of liftings'') \cite{HauTha03}. The goal of this section is to generalise the results of Hausel and Thaddeus to arbitrary semisimple groups; the existence of certain self-dual moduli stacks will then follow as a corollary.



\subsection{Comparison of Hitchin fibres for isogenous simple groups}\label{ss:compare-isog}

In what follows I will make heavy use of comparisons between Hitchin Pryms (Definition \ref{def:HPrym}) for different reductive groups belonging to the same isogeny class. Let $G$ be a reductive algebraic group, $C$ be a compact Riemann surface, and denote by $J^G_\sigma \to C$ the pullback of the group scheme of regular centralisers for $G$ by the map $\sigma: C\to [\mf{c}/\mb{G}_m]$ classifying a point in the Hitchin base; i.e.\ $J^G_\sigma = \sigma\inv J^G$, where $J^G$ is the group scheme of regular centralisers for $G$ (Definition \ref{def:J-scheme}).

Restrict to the situation where $G$ a simple group. The following claim may be checked locally:

\begin{lemma}\label{l:J-SES}
Let $G\to G/Z$ denote an isogeny of simple groups, so that $Z$ is a discrete subgroup of the centre $Z(G)$. There is a short exact sequence of commutative group schemes over $[\mf{c}/\mb{G}_m]$,
\begin{align}\label{eq:J-universal-SES}
	0 \to Z_{\mf{c}} \to J^G \to J^{G/Z} \to 0.
\end{align}
\end{lemma}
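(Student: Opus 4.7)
The plan is to establish the short exact sequence first pointwise over the regular locus of $\mathfrak{g}$, then globalize to a sequence of smooth commutative group schemes over $\mathfrak{g}^{reg}$, pull back along the Kostant section to descend to $\mathfrak{c}$, and finally use $\mathbb{G}_m$-equivariance to descend to $[\mathfrak{c}/\mathbb{G}_m]$.

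First, I would verify the pointwise statement: for each regular element $x \in \mathfrak{g}^{reg}$, there is a short exact sequence of algebraic groups
\begin{align*}
0 \to Z \to Z_G(x) \to Z_{G/Z}(x) \to 0.
\end{align*}
The inclusion $Z_G(x)/Z \hookrightarrow Z_{G/Z}(x)$ is immediate. For the reverse inclusion, note that $G$ and $G/Z$ share the same Lie algebra $\mathfrak{g}$ and the adjoint representation of $G/Z$ on $\mathfrak{g}$ is the descent of that of $G$; thus any lift to $G$ of an element of $Z_{G/Z}(x)$ automatically lies in $Z_G(x)$. Injectivity of $Z \hookrightarrow Z_G(x)$ is clear, and recall that for regular $x$ the centralizer $Z_G(x)$ is smooth, connected, and abelian of dimension $r$.

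Second, I would package these fibrewise statements as a sequence of smooth commutative group schemes over $\mathfrak{g}^{reg}$:
\begin{align*}
0 \to Z_{\mathfrak{g}^{reg}} \to I^{reg,G} \to I^{reg,G/Z} \to 0,
\end{align*}
where $Z_{\mathfrak{g}^{reg}}$ denotes the constant group scheme with fibre $Z$. Exactness on the left and in the middle follows directly from the pointwise analysis; surjectivity on the right, viewed as a morphism of schemes, follows since both sides are smooth with the same relative dimension $r$ and the morphism is surjective on geometric fibres — hence faithfully flat — so the sequence is exact in the fppf topology.

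Third, since the Kostant section $\kappa : \mathfrak{c} \to \mathfrak{g}^{reg}$ depends only on the Lie algebra data (a choice of principal $\mathfrak{sl}_2$-triple) and is therefore identical for $G$ and $G/Z$, pulling back by $\kappa$ gives the desired sequence
\begin{align*}
0 \to Z_{\mathfrak{c}} \to J^G \to J^{G/Z} \to 0
\end{align*}
over $\mathfrak{c}$. Finally, each morphism in the above sequence is manifestly $\mathbb{G}_m$-equivariant with respect to the scaling action on $\mathfrak{c}$ (the scaling action commutes with taking centralizers and with the Kostant section up to the standard weighting), so the sequence descends to $[\mathfrak{c}/\mathbb{G}_m]$.

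The main subtlety I anticipate is verifying the scheme-theoretic surjectivity of $I^{reg,G} \to I^{reg,G/Z}$ rather than merely surjectivity on points; but as noted, this is automatic from smoothness, equality of relative dimensions, and surjectivity on geometric fibres. Everything else reduces to an elementary fibrewise computation, which is precisely why the claim is amenable to local verification.
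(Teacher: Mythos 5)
Your proposal is correct and amounts to exactly the argument the paper leaves implicit: the lemma is stated with only the remark that it ``may be checked locally,'' and your fibrewise verification $0\to Z\to Z_G(x)\to Z_{G/Z}(x)\to 0$ over $\mathfrak{g}^{reg}$, the fppf-surjectivity via smoothness and the fibral flatness criterion, and the pullback along the (group-independent) Kostant section with $\mathbb{G}_m$-equivariant descent is that local check carried out carefully. One parenthetical claim is inaccurate but harmless: regular centralizers $Z_G(x)$ need not be connected (e.g.\ the centralizer of a regular nilpotent in $SL_2$ is $\{\pm 1\}$ times a unipotent group), yet your argument only uses smoothness, commutativity and relative dimension, so the proof stands.
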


Since pullback of sheaves is exact there is an analogous exact sequence over any other $[\mf{c}/\mb{G}_m]$-scheme. In particular, corresponding to a point $\sigma$ in the Hitchin base we have a short exact sequence of sheaves over $C$
	\begin{align}\label{eq:J-SES}
		0 \to Z_C \to J^G_\sigma \to J^{G/Z}_\sigma \to 0 .
	\end{align}


Suppose now that $\widetilde{G}$ is a connected and simply-connected simple group. Taking the long exact sequence of \eqref{eq:J-SES} yields
\begin{equation}\label{eq:J-LES}
\begin{tikzcd}
		0\ar[r]
			&Z \arrow[r]
				& \Gamma(C;J^{\widetilde{G}}_\sigma) \arrow[r] \arrow[d, phantom, ""{coordinate, name=Z}]
					& \Gamma(C;J^{\widetilde{G}/Z}_\sigma) \arrow[dll,rounded corners,to path={ -- ([xshift=2ex]\tikztostart.east)|- (Z) [near end]\tikztonodes-| ([xshift=-2ex]\tikztotarget.west)-- (\tikztotarget)}] \\
			&H^1(C;Z) \arrow[r]
				& H^1(C;J^{\widetilde{G}}_\sigma) \arrow[r] \arrow[d, phantom, ""{coordinate, name=Y}]
					& H^1(C;J^{\widetilde{G}/Z}_\sigma)\arrow[dll,rounded corners,to path={ -- ([xshift=2ex]\tikztostart.east)|- (Y) [near end]\tikztonodes-| ([xshift=-2ex]\tikztotarget.west)-- (\tikztotarget)}] \\
			&H^2(C;Z)	& \phantom{0}
	\end{tikzcd}
\end{equation}

\begin{defn}\label{def:HPrym}
For simply connected $\widetilde{G}$ and very regular characteristic $\sigma$, the \emph{Hitchin Prym for $\widetilde{G}$ associated to $\sigma$} is
\begin{align}\label{eq:scPrym}
	H^1(C;J^{\widetilde{G}}_\sigma)
	\quad(\cong\Higgs_{\widetilde{G}}(C)_\sigma) .
\end{align}
For a general reductive group $G$ define the \emph{Hitchin Prym} to be the identity component $H^1(C;J^G_\sigma)_0 \subset H^1(C;J^G_\sigma)$.
\end{defn}

\begin{remark}\label{rk:HPrym-semisimple}
Note that for a non simply-connected semisimple group $\widetilde{G}/Z$ the Hitchin Prym is given by
\begin{align}\label{eq:Prym}
	\ker[H^1(C;J^{{\widetilde{G}}/Z}_\sigma)\to H^2(C;Z)]
	\quad(\cong\Higgs^0_{\widetilde{G}/Z}(C)_\sigma) .
\end{align}
\end{remark}

\begin{remark}\label{rk:implicit-triv}
In order to identify the cohomology group $H^1(C;J^G_\sigma)$ with the Hitchin fibre $\Higgs_G(C)_\sigma$ I have implicitly trivialised the gerbe of Higgs bundles \cite{DonGai02} using a Hitchin section \eqref{diag:Hitchin-section}.
\end{remark}

The Hitchin Pryms are known to be abelian varieties \cite{DonPan12}, and a rephrasing of Corollary \ref{cor:H1J} yields that the fibres of the Hitchin fibration for $\widetilde{G}/Z$ which lie over very regular characteristics (Definition \ref{def:very-reg}) are torsors for the $H^1(C;J_\sigma^{\widetilde{G}/Z})_0$.

Rewrite the exact sequence associated to \eqref{eq:J-SES} as
\begin{equation}\label{eq:Prym-LES}
\begin{tikzcd}
		0\ar[r]
			&Z \arrow[r]
				& \Gamma(C;J^{\widetilde{G}}_\sigma) \arrow[r] \arrow[d, phantom, ""{coordinate, name=Z}]
					& \Gamma(C;J^{\widetilde{G}/Z}_\sigma) \arrow[dll,rounded corners,to path={ -- ([xshift=2ex]\tikztostart.east)|- (Z) [near end]\tikztonodes-| ([xshift=-2ex]\tikztotarget.west)-- (\tikztotarget)}] \\
			&H^1(C;Z) \arrow[r]
				& \Higgs_{\widetilde{G}}(C)_\sigma \arrow[r] 
					& \Higgs^0_{\widetilde{G}/Z}(C)_\sigma \ar[r]
						& 0.
	\end{tikzcd}
\end{equation}

Recall that we denote by $R$ the set of roots of the group $\widetilde{G}$. 
\begin{lemma}\label{lemma:J-surj}
Suppose that $C$ is a smooth, proper, irreducible curve over $\bC$, that the line bundle classified by $\sigma$ has nontrivial $|R|^{th}$ power, and that $\sigma$ is a very regular characteristic. Then the map on global sections $\Gamma(C;J^{\widetilde{G}}_\sigma)\to \Gamma(C; J^{\widetilde{G}/Z}_\sigma)$ is surjective.
\end{lemma}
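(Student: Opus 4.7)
The approach I would take is to compute both sides of the map explicitly via the Donagi--Gaitsgory description of $J^G_\sigma$ in terms of the cameral cover, and to show that under the stated hypotheses both sides agree with the relevant centres.

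First I would invoke the cameral description: for $\sigma$ very regular the cameral cover $p_\sigma : \tilde C \to C$ is smooth with only simple Galois ramification, and $J^{\widetilde G}_\sigma$ is canonically identified with the subsheaf of $(p_{\sigma,*}(\widetilde H \times \tilde C))^W$ consisting of $W$-equivariant sections $f:\tilde C \to \widetilde H$ satisfying $\alpha(f(\tilde c))=1$ whenever $\tilde c$ is a ramification point whose stabiliser is $\langle s_\alpha\rangle$. The analogous description holds for $J^{\widetilde G/Z}_\sigma$ with $\widetilde H$ replaced by $\widetilde H/Z$, and the projection between them is induced by the quotient $\widetilde H \twoheadrightarrow \widetilde H/Z$.

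Next I would pass to global sections. Since the torus is affine and $\tilde C$ is projective, global sections of $\widetilde H \times \tilde C \to \tilde C$ are locally constant, hence constant on each connected component. Under the hypothesis that $L^{|R|}$ is non-trivial, the discriminant $\prod_\alpha d\alpha$ defines a non-zero section of a bundle isomorphic to $L^{|R|}$, and this section has zeros; combined with very regularity and irreducibility of $C$, this forces $\tilde C$ to be connected and ensures that each $W$-orbit of roots contributes a non-empty ramification divisor on $\tilde C$. The $W$-equivariance of a global constant then places it in $\widetilde H^W$, and the ramification conditions, applied to one representative of each $W$-orbit of roots, cut this down to $\bigcap_{\alpha \in R}\ker(\alpha) = Z(\widetilde G)$. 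The identical argument for $\widetilde G/Z$ identifies $\Gamma(C; J^{\widetilde G/Z}_\sigma)$ with $Z(\widetilde G)/Z$, and the induced map on sections becomes the natural quotient $Z(\widetilde G) \twoheadrightarrow Z(\widetilde G)/Z$, which is manifestly surjective.

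The main obstacle is verifying the geometric input under the weak-looking hypothesis that only $L^{|R|}$ is non-trivial: one must argue both that $\tilde C$ is connected and that every $W$-orbit of roots acquires ramification, i.e.\ that the monodromy of the cover is large enough to see every class of reflections. Should this fail in some corner case, the statement can equivalently be reformulated as the vanishing of the connecting homomorphism $\Gamma(C; J^{\widetilde G/Z}_\sigma) \to H^1(C; Z)$ from \eqref{eq:J-LES}, which may then be checked by exhibiting every global section on the right as the image of a tautological central lift via the canonical inclusion $Z(\widetilde G) \hookrightarrow \Gamma(C; J^{\widetilde G}_\sigma)$.
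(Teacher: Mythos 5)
Your main line is the same as the paper's: the Donagi--Gaitsgory description of $J_\sigma$ via the cameral cover, constancy of sections because the torus is affine and $\tilde{C}_\sigma$ is proper, and the observation that the ramification conditions at fixed points of every reflection cut the constants down to (the image of) the centre. In the connected, fully ramified case your computation agrees with \eqref{eq:H-ram-global-sections}, since $\widetilde{H}^W=Z(\widetilde{G})$ for simply connected $\widetilde{G}$; you bypass Proposition \ref{thm:fixed-lifts} by computing $\bigcap_{\alpha\in R}\ker\alpha$ directly on both sides, which is fine as far as it goes.

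There are, however, genuine gaps. First, the stated hypotheses do \emph{not} force $\tilde{C}_\sigma$ to be connected: nontriviality of $L^{|R|}$ together with very regularity only produces \emph{some} simple ramification, and the monodromy of the $W$-cover may still be a proper reflection subgroup of $W$, in which case the cameral cover is disconnected. The paper never claims connectivity under these hypotheses (it is invoked only when $g(C)>1$ and $L=K_C$, citing \cite{SCO98}); instead it gives a separate argument for the disconnected case, replacing $(\widetilde{H}/Z)^W$ by $H^S$ with $S=\Stab_W([\tilde{C}_\ast])$ the stabiliser of a chosen component, and using that $S$ is generated by reflections with fixed points on that component. Your proposal simply asserts connectivity, so this case is unhandled. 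Second, the claim that ``each $W$-orbit of roots contributes ramification'' does not follow from a single zero of the discriminant in the non-simply-laced case: a zero of the full product \eqref{eq:discr} gives a fixed point for one root, hence (by $W$-conjugacy) for its orbit only; to reach the other orbit one needs the factorisation of the discriminant into the products over short and over long roots, as in the paper's footnote. Third, your fallback is circular: proving the connecting map $\Gamma(C;J^{\widetilde{G}/Z}_\sigma)\to H^1(C;Z)$ vanishes by ``exhibiting every global section on the right as the image of a central lift'' presupposes exactly the identification $\Gamma(C;J^{\widetilde{G}/Z}_\sigma)=Z(\widetilde{G})/Z$ that is at stake; Example \ref{eg:surj-fails} shows that without ramification the right-hand side genuinely contains sections that are not images of central elements, so the geometric input cannot be dispensed with in this way.
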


\begin{remark}\label{rk:alt-J}
In what follows I will make use of an alternative and more explicit description of the sheaf of regular centralisers, which is due to \cite{DonGai02}. Denote by $\pi_\sigma:\tilde{C}_\sigma \to C$ the cameral cover of $C$ classified by $\sigma:C\to[\mf{c}/\mb{G}_m]$, and consider the sheaf on $\tilde{C}_\sigma$ of holomorphic maps to a choice of maximal torus $H\subset G$, $H(\mc{O}_{\tilde{C}_\sigma})$. Push this sheaf down to $C$ and take $W:= W_G(H)$-invariants, calling the result $\overline{H}_{\tilde{C}_\sigma}$,
\begin{align}\label{eq:pushforward-invariants}
\overline{H}_{\tilde{C}_\sigma}(U) = \left((\pi_\sigma)_\ast H(\mc{O}_{\tilde{C}_\sigma})^W \right)(U) = \Hom_W(\tilde{U}_\sigma,H) ,
\end{align}
i.e.\ $W$-equivariant maps from the induced cameral cover $\tilde{U}_\sigma$ to the maximal torus $H$. Denote by $\mf{D}_\sigma^\alpha$ the fixed point scheme of the root reflection $s_\alpha \in W$ acting on $\tilde{C}_\sigma$, and define a subsheaf\footnote{Note that this is a break from the convention that this should mean the constant sheaf on $\tilde{C}_\sigma$ valued in $H$ -- this break in convention will persist until the end of Example \ref{eg:surj-fails}.} $H_{\tilde{C}_\sigma}\subset\overline{H}_{\tilde{C}_\sigma}$ by
\begin{align}\label{eq:T-sheaf}
	H_{\tilde{C}_\sigma}(U) = \{t\in\overline{H}_{\tilde{C}_\sigma}(U)\,|\, (\alpha\circ t)|_{\mf{D}_\sigma^\alpha} = +1 \text{ for each } \alpha\in R\}.
\end{align}
Then according to \cite[Theorem 11.6]{DonGai02} there is an isomorphism between $J^G_\sigma$ and $H_{\tilde{C}_\sigma}$. I will use the description given by the latter in the proof of Lemma \ref{lemma:J-surj}.
\end{remark}

\begin{proof}[Proof of Lemma \ref{lemma:J-surj}]
Observe that since $C$ is proper so is $\tilde{C}_\sigma$, and since $\sigma$ is assumed to be very regular $\tilde{C}_\sigma$ is non-singular. Thus, since $H=\widetilde{H}/Z$ is affine, any map from $\tilde{C}_\sigma$ to $H$ will be locally constant; i.e.\
\begin{align}\label{eq:T-bar-const}
	\overline{H}_{\tilde{C}_\sigma}(C) = \Hom_W(\tilde{C}_\sigma,H) = \Hom_W(\pi_0(\tilde{C}_\sigma),H).
\end{align}
First consider the case where $\tilde{C}_\sigma$ is connected -- for instance, this is true if $g(C)>1$ and $L=K_C$ \cite{SCO98}  -- so that $\overline{H}_{\tilde{C}_\sigma}(C) = (\widetilde{H}/Z)^W$. Then
\begin{align}\label{eq:H-global-sections}
	H_{\tilde{C}_\sigma}(C)
		&= \{hZ\in (\widetilde{H}/Z)^W \,|\, \alpha(hZ) = +1, \,\forall\alpha\in R \text{ s.t. } s_\alpha\text{ fixes some point in }\tilde{C}_\sigma\} 	\\
		&= \{hZ\in (\widetilde{H}/Z)^W \,|\, s_\alpha(hZ) = hz, \,\forall hz\in hZ, \,\forall\alpha\in R \text{ s.t. } s_\alpha\text{ fixes some point in }\tilde{C}_\sigma \} , \nonumber
\end{align}
where the second equality follows from Proposition \ref{thm:fixed-lifts}. The discriminant \eqref{eq:discr}, which locally detects ramification of cameral covers, pulls back along $\sigma$ to give a section of the $|R|^{th}$ power of the line bundle classified by $\sigma$. By assumption this is non-trivial, hence we can guarantee the existence of a root\footnote{In the simply-laced case, or a short and a long root in the non-simply laced case by the corresponding factorisation of \eqref{eq:discr} into a product over short and long roots.} $\alpha$ such that $s_\alpha$ fixes some point in $\tilde{C}_\sigma$. Via the $W$-action, we can therefore guarantee that \emph{every} root $\alpha\in R$ fixes at least one point of $\tilde{C}_\sigma$. Since the Weyl group acts trivially on the centre, we have
\begin{align}\label{eq:H-ram-global-sections}
	H_{\tilde{C}_\sigma}(C)
		&= \{hZ\in(\widetilde{H}/Z)^W \,|\, s_\alpha(h')=h', \,\forall h'\in hZ \} = \widetilde{H}^W/Z = \widetilde{H}_{\tilde{C}_\sigma}(C)/Z.
\end{align}
If $\tilde{C}_\sigma$ has multiple connected components, choose one and denote it by $\tilde{C}_\ast$. Setting $S:=\Stab_W([\tilde{C}_\ast])$, $[\tilde{C}_\ast]\in\pi_0(\tilde{C}_\sigma)$, we may identify $\overline{H}_{\tilde{C}_\sigma}(C) = H^S$. Then the same argument as above goes through, using that $S$ is generated by those $s_\alpha\in W$ such that $s_\alpha$ fixes some point in $\tilde{C}_\ast$.
\end{proof}

\begin{eg}\label{eg:surj-fails}
How could this have failed? Suppose that $C$ is an irreducible complex projective variety that admits a connected \'etale double cover: all double covers are $\mf{sl}_2\bC$ cameral covers, so we are implicitly assuming that our double cover is cameral and valued in some line bundle which has trivial square. Since there is no ramification the condition \eqref{eq:T-sheaf} is vacuous and so
\begin{align}\label{eq:J-PGL-J-SL}
	J^{SL_2}(C) = H_{SL_2}^W\cong\bZ/2\bZ \quad\text{and}\quad J^{PGL_2}(C)=H_{PGL_2}^W=\bZ/2\bZ,
\end{align}
where the Weyl group invariants in this case are calculated in Example \ref{eg:Winvt-sl2}. Thus, in this example $J^{PGL_2}(C) \not\cong J^{SL_2}(C)/Z(SL_2\bC)$.
\end{eg}

From Lemma \ref{lemma:J-surj} we obtain a comparison theorem relating any Hitchin Prym to the Hitchin Prym for the connected simply-connected group:

\begin{thm}\label{thm:PrymComp}
Let $\widetilde{G}$ be a simple, connected, simply-connected group and $\widetilde{G}\to \widetilde{G}/Z$ an isogeny. Then for $\sigma\in\Hitch_{\mf{g}}(C,K_C)\setminus\Delta_{\mf{g}}$ there is an isomorphism of abelian varieties
\begin{align}\label{eq:Prym-quot}
	\Higgs^0_{\widetilde{G}/Z}(C)_\sigma  = \frac{\Higgs_{\widetilde{G}}(C)_\sigma}{H^1(C,Z)}.
\end{align}
\end{thm}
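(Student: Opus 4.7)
The plan is to read Theorem \ref{thm:PrymComp} off of the long exact sequence \eqref{eq:Prym-LES} once Lemma \ref{lemma:J-surj} is in hand. All the real work is already contained in that lemma; the theorem itself is essentially a diagram chase plus a translation between sheaf cohomology and Hitchin fibres.

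First I would apply the long exact sequence of sheaf cohomology on $C$ associated to the short exact sequence of commutative group schemes \eqref{eq:J-SES}. Lemma \ref{lemma:J-surj} asserts that under the standing hypotheses the map $\Gamma(C;J^{\widetilde G}_\sigma)\to\Gamma(C;J^{\widetilde G/Z}_\sigma)$ is surjective; by exactness this is equivalent to the vanishing of the connecting homomorphism $\Gamma(C;J^{\widetilde G/Z}_\sigma)\to H^1(C;Z)$, which is in turn equivalent to the injectivity of $H^1(C;Z)\to H^1(C;J^{\widetilde G}_\sigma)$. Continuing, we obtain
\begin{equation*}
0\to H^1(C;Z)\to H^1(C;J^{\widetilde G}_\sigma)\to H^1(C;J^{\widetilde G/Z}_\sigma)\to H^2(C;Z),
\end{equation*}
so the image of $H^1(C;J^{\widetilde G}_\sigma)$ in $H^1(C;J^{\widetilde G/Z}_\sigma)$ is canonically isomorphic to the quotient $H^1(C;J^{\widetilde G}_\sigma)/H^1(C;Z)$ and simultaneously coincides with the kernel of the boundary map into $H^2(C;Z)$.

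Second, I would translate both sides back into the language of Hitchin fibres. Since $\sigma$ is very regular and $\widetilde G$ is simply connected, Corollary \ref{cor:H1J} (together with the Kostant section to trivialise the Higgs gerbe, cf.\ Remark \ref{rk:implicit-triv}) identifies $H^1(C;J^{\widetilde G}_\sigma)\cong\Higgs_{\widetilde G}(C)_\sigma$, while Remark \ref{rk:HPrym-semisimple} identifies the kernel $\ker[H^1(C;J^{\widetilde G/Z}_\sigma)\to H^2(C;Z)]$ with the neutral component $\Higgs^0_{\widetilde G/Z}(C)_\sigma$. Combining these identifications with the displayed exact sequence yields the desired equality as abstract groups.

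Finally, to upgrade this to an isomorphism of abelian varieties I would note that both sides inherit their algebraic group structure from the same sheaf of smooth commutative group schemes $J^{\widetilde G/Z}_\sigma$: the map $H^1(C;J^{\widetilde G}_\sigma)\to H^1(C;J^{\widetilde G/Z}_\sigma)$ is a morphism of abelian varieties by functoriality of the Hitchin description (cf.\ \cite{DonPan12}), and the quotient of an abelian variety by the finite subgroup $H^1(C;Z)$ acting by translations is again an abelian variety, canonically realised as the image. The only conceptual obstacle in the whole argument is the surjectivity of Lemma \ref{lemma:J-surj}, which is exactly where the hypotheses on the line bundle $L$ (nontrivial $|R|^{\text{th}}$ power) and the very regularity of $\sigma$ are required; everything after that is a mechanical unpacking of the long exact sequence.
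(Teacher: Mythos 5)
Your proposal is correct and follows essentially the same route as the paper: Lemma \ref{lemma:J-surj} forces the connecting map to vanish, so the long exact sequence of \eqref{eq:J-SES} splits, and the bottom short exact sequence together with the identifications of Corollary \ref{cor:H1J} and Remark \ref{rk:HPrym-semisimple} gives \eqref{eq:Prym-quot}. Your closing remark upgrading the group isomorphism to one of abelian varieties is a welcome (if brief) elaboration of a point the paper leaves implicit.
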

\begin{proof}
By Lemma \ref{lemma:J-surj}, $H^1(C;Z)\to \Higgs_{\widetilde{G}}(C)_\sigma$ is injective, thus the long exact sequence of \eqref{eq:J-SES} involving the Hitchin Pryms breaks up into two short exact sequences; the isomorphism of the theorem is the content of the bottom sequence.
\end{proof}

\begin{remark}\label{rk:large-genus}
To really get value out of Theorem \ref{thm:PrymComp} one should assume that the genus of $C$ is at least 2, so that the very regular locus is open and dense in the Hitchin base \cite{Falt93}.
\end{remark}

\begin{thm}\label{thm:dual-prym}
Let $\widetilde{G}$ be a simple, connected, simply-connected group, and let $\widetilde{^LG}$ denote the simply-connected cover of its Langlands dual group. Then
\begin{align}\label{eq:dual-Prym-quot}
	(\Higgs_{\widetilde{^LG}}(C)_\sigma)^D = \frac{\Higgs_{\widetilde{G}}(C)_\sigma}{H^1(C;Z(\widetilde{^LG}))^\vee} .
\end{align}
\end{thm}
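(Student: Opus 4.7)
The plan is to combine three inputs: Donagi--Pantev's Langlands duality for Hitchin systems, the Prym comparison Theorem~\ref{thm:PrymComp}, and Pontrjagin duality between the centres of a Langlands dual pair. Throughout we work at a very regular characteristic $\sigma$, so that all the Hitchin fibres are smooth abelian varieties (torsors) and shifted Cartier duality reduces to ordinary dual-abelian-variety duality, fixed by a choice of Hitchin section (Remark~\ref{rk:implicit-triv}).

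\emph{Step 1 (apply Donagi--Pantev in the dual direction).} The Langlands dual of the simply connected group $\widetilde{^LG}$ is the adjoint form $\widetilde{G}/Z(\widetilde{G})$. The Donagi--Pantev theorem then identifies the shifted Cartier dual of the (connected) Hitchin fibre for the simply connected group with the Hitchin Prym of its Langlands dual:
\begin{align*}
(\Higgs_{\widetilde{^LG}}(C)_\sigma)^D \;\simeq\; \Higgs^0_{\widetilde{G}/Z(\widetilde{G})}(C)_\sigma.
\end{align*}

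\emph{Step 2 (apply Theorem~\ref{thm:PrymComp}).} Using the isogeny $\widetilde{G} \to \widetilde{G}/Z(\widetilde{G})$, Theorem~\ref{thm:PrymComp} rewrites this identity component as a quotient of the simply connected Hitchin Prym:
\begin{align*}
\Higgs^0_{\widetilde{G}/Z(\widetilde{G})}(C)_\sigma \;=\; \frac{\Higgs_{\widetilde{G}}(C)_\sigma}{H^1(C;Z(\widetilde{G}))}.
\end{align*}

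\emph{Step 3 (match the quotient groups via Pontrjagin duality).} It remains to identify $H^1(C;Z(\widetilde{G}))$ with $H^1(C;Z(\widetilde{^LG}))^\vee$. The coweight/coroot description of the centre recalled in Remark~\ref{rk:centre-of-group}, together with the perfect pairing between the character and cocharacter lattices, produces a canonical perfect pairing $Z(\widetilde{G}) \times Z(\widetilde{^LG}) \to U(1)$; equivalently $Z(\widetilde{G}) \cong Z(\widetilde{^LG})^\vee$. Combining this with Poincar\'e duality for $C$ with finite coefficients (equivalently, $H^1(C;A) \cong A^{2g}$ for finite abelian $A$ on a curve of genus $g$, together with compatibility of Pontrjagin duality with finite products) yields
\begin{align*}
H^1(C;Z(\widetilde{G})) \;\cong\; H^1(C;Z(\widetilde{^LG})^\vee) \;\cong\; H^1(C;Z(\widetilde{^LG}))^\vee,
\end{align*}
and inserting this into Step 2 gives the claimed formula.

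The main obstacle is bookkeeping rather than deep content: one must verify that the abstract isomorphisms produced in the three steps actually assemble into a \emph{canonical} identification of commutative group stacks compatible with the shifted Cartier duality pairing. In particular the Pontrjagin duality of centres in Step 3 must be matched with the pairing induced on $H^1$ by the Poincar\'e pairing on $C$, and the Donagi--Pantev duality of Step 1 must be identified with the pairing that induces the quotient in Step 2 through the short exact sequence of centralisers \eqref{eq:J-SES}; this compatibility is the key point where the Hitchin-section normalisation of Remark~\ref{rk:implicit-triv} is essential.
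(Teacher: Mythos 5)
Your proposal is correct, but it routes through the duality in the opposite order from the paper. The paper's proof applies Theorem~\ref{thm:PrymComp} on the Langlands-dual side, i.e.\ to the isogeny $0\to H^1(C;Z(\widetilde{^LG}))\to \Higgs_{\widetilde{^LG}}(C)_\sigma \to \Higgs^0_{(^LG)_{\ad}}(C)_\sigma \to 0$, and then simply dualises this isogeny of abelian varieties: the kernel of the dual isogeny is canonically $H^1(C;Z(\widetilde{^LG}))^\vee$, and Donagi--Pantev identifies the middle term $(\Higgs^0_{(^LG)_{\ad}}(C)_\sigma)^D$ with $\Higgs_{\widetilde{G}}(C)_\sigma$, so the group $H^1(C;Z(\widetilde{^LG}))^\vee$ and its embedding appear for free, with no appeal to Poincar\'e duality or to the pairing $Z(\widetilde{G})\cong Z(\widetilde{^LG})^\vee$. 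You instead dualise first ($(\Higgs_{\widetilde{^LG}}(C)_\sigma)^D\simeq \Higgs^0_{G_{\ad}}(C)_\sigma$, the bidual form of the same Donagi--Pantev input), apply Theorem~\ref{thm:PrymComp} on the $\widetilde{G}$ side, and then relabel the finite quotient group via $Z(\widetilde{G})\cong Z(\widetilde{^LG})^\vee$ and Poincar\'e duality on $C$. This is a valid proof of the stated isomorphism --- note that your Step 2 invokes the surjectivity input (Lemma~\ref{lemma:J-surj}) for $\widetilde{G}$ where the paper needs it for $\widetilde{^LG}$, both of which hold --- and your closing worry about compatibility of pairings is not actually needed for the bare isomorphism: relabelling the subgroup $H^1(C;Z(\widetilde{G}))$ by an abstract isomorphism does not change the quotient. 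Where the compatibility would matter is in pinning down the specific embedded copy of $H^1(C;Z(\widetilde{^LG}))^\vee$ implicit in the quotient notation; the paper's isogeny-dualising route supplies that embedding canonically, which is what makes its proof shorter and free of the bookkeeping you flag.
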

\begin{proof}
By \cite[Theorem A]{DonPan12} we have that $\Higgs_{\widetilde{G}}(C)_\sigma = (\Higgs^0_{(^LG)_{\ad}}(C)_\sigma)^D$. Dualising the isogeny of abelian varieties from Theorem \ref{thm:PrymComp}
\begin{align}\label{eq:Prym-isogeny}
	0\to H^1(C;Z(\widetilde{^LG}))\to \Higgs_{\widetilde{^LG}}(C)_\sigma \to \Higgs^0_{(^LG)_{\ad}}(C)_\sigma \to 0
\end{align}
we obtain the dual isogeny
\begin{align}\label{eq:dual-Prym-isogeny}
	0\to H^1(C;Z(\widetilde{^LG}))^\vee \to (\Higgs^0_{(^LG)_{\ad}}(C)_\sigma)^D\to (\Higgs_{\widetilde{^LG}}(C)_\sigma)^D\to 0 .
\end{align}
\end{proof}


\subsection{Construction and local structure of $\HiggsSt_{\widetilde{G}}^\bullet(C)$ and $\mc{M}_{\widetilde{G}}^\bullet(C)$}\label{ss:construction}

In their proof of Langlands duality for $SL/PGL$-Hitchin systems \cite{HauTha03}, Hausel and Thaddeus make use not just of the moduli of $SL_n$-Higgs bundles but of the moduli space of ``degree $d$'' $SL_n$-Higgs bundles. This does not literally make sense as written (as an $SL_n$-bundle has trivial determinant and is thus degree zero) -- what is meant by this is ``$GL_n$-Higgs bundles with determinant a \emph{fixed line bundle of degree $d$} and \emph{trace-free Higgs field}''.

To generalise the results of \cite{HauTha03,DonPan12}, and to prove the existence of a self-dual space, I will now construct a generalisation of this space for $\widetilde{G}$-Higgs bundles, where $\widetilde{G}$ may be any connected simply-connected semisimple group (cf.\ \cite{BLS98} for an analogous construction for the moduli stack of bundles).


\subsubsection{Construction of $\HiggsSt_{\widetilde{G}}^\bullet(C)$}\label{sss:construct-Higgs-bullet}

Let $\mu_N$ denote the group of $N^{th}$ roots of unity with generator $\omega := e^{\frac{2\pi i}{N}}$. Observe that a homomorphism $\tau:(\mu_N)^s \to (\mb{C}^\times)^s$ is determined by an $s\times s$-matrix $A=(A_{ji})\in\Mat_{s\times s}(\mb{Z}/N\mb{Z})$ by setting
\begin{equation}\label{eq:tau-matrix}
\begin{tikzcd}[row sep = tiny]
	(\mu_N)^s \ar[r,"\tau"]			& (\mb{C}^\times)^s 	\\
	(\omega^{\vec{a}})\ar[r,mapsto]	& (\omega^{A\vec{a}})
\end{tikzcd}
\end{equation}
where $\vec{a}\in(\mb{Z}/N\mb{Z})^s$ and $(\omega^{\vec{a}})=(\omega^{a_1},\ldots,\omega^{a_s})\in(\mu_N)^s$.

\begin{defn}\label{def:special-emb}
Call an homomorphism $\tau:(\mu_N)^s \to (\mb{C}^\times)^s$ a \emph{general embedding} if it can be represented by a matrix in the image of the map $GL_s(\mb{Z})\to GL_s(\mb{Z}/N\mb{Z})$.

More generally, let $K$ be a finite abelian group equipped with an isomorphism $k:K\simeq\mu_{N_1}\times\cdots\times\mu_{N_s}$ with $s$ minimal, and let $T$ be a complex algebraic torus of rank $s$. I will call a homomorphism $\tau:K\to T$ a \emph{general embedding} if the map $\tau\circ k\inv:(\mu_{lcm(N_1,\ldots,N_s)})^s \to (\mb{C}^\times)^s$ is a general embedding for some isomorphism $T\simeq (\mb{C}^\times)^s$.
\end{defn}

\begin{remark}\label{rk:spec-emb-dont-care-bout-T-triv}
If $\tau$ is a general embedding with respect to \emph{some} isomorphism $T\simeq (\mb{C}^\times)^s$, then since automorphisms of $(\mb{C}^\times)^s$ correspond to elements of $GL_s(\mb{Z})$, it is in fact a general embedding with respect to \emph{all} such isomorphisms.
\end{remark}

\begin{remark}\label{rk:spec-emb-is-emb}
It is not difficult to show that $\tau:(\mu_N)^s \to (\mb{C}^\times)^s$ is an embedding if and only if any matrix $A$ which represents it is in $GL_s(\mb{Z}/N\mb{Z})$. In particular, general embeddings are embeddings.
\end{remark}

\begin{eg}\label{eg:simple-group-embeddings}
Suppose that we are interested in general embeddings of the centre of a simple group. There are two possibilities:
\begin{itemize}
	\item The centre is cyclic, isomorphic to $\mu_N$. In this case, embeddings correspond to elements of $(\mb{Z}/N\mb{Z})^\times$, while there are only two general embeddings, $\omega\mapsto \omega^{\pm 1}$ (distinct if $N\neq 2$).
	\item The centre is $\mu_2\times\mu_2$. In this case all embeddings are general embeddings, given by elements of $GL_2(\mb{F}_2) \simeq S_3$ (the symmetric group on 3 letters).
\end{itemize}
\end{eg}

Now, let $\widetilde{G}$ be a connected simply-connected simple group with centre $Z(\widetilde{G})$, fix a trivialisation $k:Z(\widetilde{G})\to\mu_{N_1}\times\cdots\times\mu_{N_s}$ with $s$ minimal, and let $\tau:Z(\widetilde{G})\to T$ be a general embedding of $Z(\widetilde{G})$ into a complex algebraic torus (whose rank $s$ is necessarily equal to the number of cyclic factors in the trivialisation of $Z(\widetilde{G})$, by the definition of a general embedding).\footnote{This value of $s$ is moreover the minimal possible rank for a torus admitting an embedding of $Z(\widetilde{G})$.}

\begin{defn}\label{def:G-tau}
Define a group $\widetilde{G}_\tau$ by the equation
\begin{align}\label{eq:G-tau}
	\widetilde{G}_\tau := \frac{\widetilde G \times T}{Z(\widetilde{G})} ,
\end{align}
where $Z(\widetilde{G})\subset \widetilde{G}$ is the inclusion homomorphism.
\end{defn}

\begin{prop}\label{prop:tau-indept}
The group $\widetilde{G}_\tau$ is independent of the choice of general embedding, up to non-canonical isomorphism.
\end{prop}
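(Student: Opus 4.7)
The plan is to show that any two general embeddings $\tau_1,\tau_2 \colon Z(\widetilde{G}) \to T$ differ by an automorphism of $T$, and then to observe that such an automorphism descends to an isomorphism of the corresponding quotient groups $\widetilde{G}_{\tau_1} \cong \widetilde{G}_{\tau_2}$.

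To construct the relating automorphism, fix $N := \operatorname{lcm}(N_1,\ldots,N_s)$ and an identification $T \simeq (\mb{C}^\times)^s$. By Definition \ref{def:special-emb}, each composite $\tau_i \circ k\inv$ is represented by a matrix $A_i \in GL_s(\mb{Z}/N\mb{Z})$ lying in the image of the reduction $GL_s(\mb{Z})\to GL_s(\mb{Z}/N\mb{Z})$; pick integer lifts $\widetilde{A}_i \in GL_s(\mb{Z})$. Then $\widetilde{B} := \widetilde{A}_2\,\widetilde{A}_1\inv \in GL_s(\mb{Z})$ defines an automorphism $\phi \in \mathrm{Aut}(T)$ under the standard identification $\mathrm{Aut}(T) \cong GL_s(\mb{Z})$, and since its restriction to the $N$-torsion $(\mu_N)^s \subset T$ is given by the reduction $A_2 A_1\inv$, we have $\phi\circ\tau_1 = \tau_2$.

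Consider the automorphism $\Psi := (\mathrm{id}_{\widetilde{G}},\phi)$ of $\widetilde{G} \times T$. For any $z \in Z(\widetilde{G})$ we have $\Psi(z,\tau_1(z)) = (z,\tau_2(z))$, so $\Psi$ carries the graph subgroup embedding $Z(\widetilde{G})$ via $\tau_1$ bijectively onto the graph subgroup embedding $Z(\widetilde{G})$ via $\tau_2$. Hence $\Psi$ descends to an isomorphism $\widetilde{G}_{\tau_1} \xrightarrow{\sim} \widetilde{G}_{\tau_2}$. This isomorphism is non-canonical because two choices of integer lift $\widetilde{A}_i$ differ by an element of $\ker(GL_s(\mb{Z}) \to GL_s(\mb{Z}/N\mb{Z}))$, which generically produces a genuinely different automorphism $\phi$ of $T$ and hence a different isomorphism of quotients.

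I do not foresee a serious obstacle: the essential content is precisely that the word ``general'' in Definition \ref{def:special-emb} encodes the assumption that the matrix representative of $\tau_i$ admits an integer lift, which is what allows the ratio of the two matrix representatives to extend from an automorphism of the $N$-torsion to an honest automorphism of the full torus $T$. Without this hypothesis the comparison would only take place in $GL_s(\mb{Z}/N\mb{Z})$, with no reason for it to globalise to $T$.
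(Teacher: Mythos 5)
Your proof is correct and follows essentially the same route as the paper: represent both general embeddings by matrices, lift them to $GL_s(\mb{Z})$, take the ratio $\widetilde{A}_2\widetilde{A}_1\inv$ to get an automorphism of $T$ intertwining $\tau_1$ and $\tau_2$, and check that $(\mathrm{id}_{\widetilde{G}},\phi)$ descends to the quotients. The only (cosmetic) difference is that the paper speaks of lifting to $SL_s(\mb{Z})$ where $GL_s(\mb{Z})$, as you use, is what the definition actually provides; this does not affect the argument.
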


\begin{proof}
Suppose that $\tau_1$, $\tau_2$ are two general embeddings, and consider them as maps from $(\mu_N)^s\to(\mb{C}^\times)^s$ where $N$ is the lowest common multiple of the orders of the cyclic factors of $Z(\widetilde{G})$. Let $A_1$, $A_2$ be representative matrices for the general embeddings. We wish to find an automorphism $\beta:(\mb{C}^\times)^s\to(\mb{C}^\times)^s$ such that $\beta\circ\tau_1=\tau_2$.

As observed above, $\beta$ will be represented by some matrix $B\in\Mat_{s\times s}(\mb{Z})$. For $\beta\circ\tau_1=\tau_2$ to hold, we need that for all $\vec{a}\in(\mb{Z}/N\mb{Z})^s$, $\beta\circ\tau_1(\omega^{\vec{a}})=(\omega^{BA_1\vec{a}})=(\omega^{A_2\vec{a}})=\tau_2(\omega^{\vec{a}})$, which occurs if and only if $BA_1 \equiv A_2$ modulo $N$. But by the definition of a general embedding the matrices representing $\tau_1$ and $\tau_2$ may be lifted to matrices in $SL_s(\mb{Z})$, which I will also denote by $A_1$ and $A_2$, and so it suffices to take $B=A_2A_1\inv$.

To complete the proof of the proposition, it suffices to observe that $[\id_{\widetilde{G}}\times\beta]$ is a well-defined isomorphism $\widetilde{G}_{\tau_1}\simeq \widetilde{G}_{\tau_2}$.
\end{proof}

\begin{remark}\label{rk:constraint}
It is reasonable to ask whether we really needed to consider general embeddings, or whether any matrix $A\in GL_s(\mb{Z}/N\mb{Z})$ would suffice. In fact, we do: Suppose that $\det(A)\neq\pm 1$ modulo $N$, so that $A$ cannot be lifted to $GL_s(\mb{Z})$. It is possible to find an automorphism $\gamma$ of $(\mu_N)^s$, represented by a matrix $C$, such that $\det(AC)=1$. In order for this to induce an isomorphism as in Proposition \ref{prop:tau-indept}, $\alpha$ would need to extend to an automorphism of the group $\widetilde{G}$, necessarily not an inner automorphism. But, for example, $\Out(SL_8\mb{C})=\mb{Z}/2\mb{Z}$ while $\Aut(Z(SL_8\mb{C}))=\Aut(\mb{Z}/8\mb{Z})=\mb{Z}/2\mb{Z}\times\mb{Z}/2\mb{Z}$ -- so there are necessarily automorphisms of the centre which do not extend to automorphisms of the entire group.
\end{remark}

\begin{remark}\label{rk:canon}
Note that the isomorphism $\beta$ in Proposition \ref{prop:tau-indept} is {\bf not} unique. For example, if $s=2$ we have $\left(\begin{array}{c c} 1 & 0 \\ 0 & 1 \end{array}\right)\equiv\left(\begin{array}{c c} 1 & N \\ 0 & 1 \end{array}\right)$, and both are in $GL_2(\mb{Z})$.
\end{remark}

The group $\widetilde{G}_\tau$ comes equipped with two projections
\begin{equation}\label{diag:G-tau-proj}
\begin{tikzcd}
			& \widetilde{G}_\tau \ar[dl,"p"']\ar[dr,"\pd"] 	\\
	G_{\ad} 	&	& T/Z(\widetilde{G})
\end{tikzcd}
\end{equation}

Note that $T/Z(\widetilde{G}) \simeq T$ non-canonically: for the moment I will not choose such an isomorphism.

\begin{eg}\label{eg:SL-tau-is-GL}
Let $\widetilde{G}=SL_n$ and $\tau:Z(SL_n)=\mu_n \subset \mb{G}_m$.\footnote{Although some results will require that we work over $\mb{C}$, many of the constructions -- such as this one -- are independent of the ground ring.} Then $\widetilde{G}_\tau = GL_n$ and the maps $p$ and $\pd$ are
\begin{equation}\label{diag:GL-proj}
\begin{tikzcd}
			& GL_n \ar[dl,"p"']\ar[dr,"\det"] 	\\
	PGL_n 	&	& \mb{G}_m
\end{tikzcd}
\end{equation}
\end{eg}

The Lie algebra of $\widetilde{G}_\tau$ is
 \begin{align}\label{eq:G-tau-Lie}
 	\mf{g}_\tau = \mf{g}\oplus\mf{t}
 \end{align}
 where $\mf{g}=\Lie(\widetilde{G})$ and $\mf{t}=\Lie(T)$. Let $H\subset\widetilde{G}$ be a maximal torus with Lie algebra $\mf{h}$ so that
 \begin{align}\label{eq:H-tau}
 	H_\tau = \frac{H\times T}{Z(\widetilde{G})}
 \end{align}
 is a maximal torus of $\widetilde{G}_\tau$ with Lie algebra $\mf{h}_\tau=\mf{h}\times\mf{t}$. Since $\mf{t}$ is abelian the quotient $\mf{c}_\tau=\mf{h}_\tau/W$ is
\begin{align}\label{eq:adj-quot-tau}
	\mf{c}_\tau = (\mf{h}/W)\times \mf{t} = \mf{c}\times\mf{t}
\end{align}
where $\mf{c}=\mf{h}/W$ is the adjoint quotient for the group $\widetilde{G}$ and $W\equiv W_{\widetilde{G}_\tau}(H_\tau)=W_{\widetilde{G}}(H)$ is the Weyl group. Thus there is a ``Hitchin map'' between stacks (cf.\ \eqref{eq:Chev-descent-2})
\begin{align}\label{eq:Hitchin-tau}
	\chi_\tau = \chi\times\id_{\mf{t}}:
			\left[\mf{g}_\tau/(\widetilde{G}_\tau\times\mb{G}_m)\right]
			=\left[(\mf{g}\times\mf{t})/(\widetilde{G}_\tau\times\mb{G}_m)\right]
			\to\left[\mf{c}/\mb{G}_m\right]\times\left[ \mf{t}/\mb{G}_m \right]
			=\left[\mf{c}_\tau/\mb{G}_m\right] .
\end{align}

The maps $p$ and $\pd$ induce maps
\begin{equation}\label{diag:Higgs-target-proj}
\begin{tikzcd}[column sep = tiny]
			& \left[(\mf{g}\times\mf{t})/(\widetilde{G}_\tau\times\mb{G}_m)\right] \ar[dl,"p_*"']\ar[dr,"\pd_*"] 	\\
	\left[\mf{g}/G_{\ad}\times\mb{G}_m\right] 	&	& \left[\mf{t}/(T/Z(\widetilde{G}))\times\mb{G}_m\right]\simeq B(T/Z(\widetilde{G}))\times\left[\mf{t}/\mb{G}_m\right]
\end{tikzcd}
\end{equation}
and so for a space $X$ there are maps
\begin{equation}\label{diag:Higgs-mapping-proj}
\begin{tikzcd}[column sep = tiny]
			& \MapSt\left(X,\left[(\mf{g}\times\mf{t})/\widetilde{G}_\tau\times\mb{G}_m\right]\right) \ar[dl,"p_*"']\ar[dr,"\pd_*"] 	\\
	\MapSt\left(X,\left[\mf{g}/G_{\ad}\times\mb{G}_m\right]\right) 	&	& \BunSt_{T/Z(\widetilde{G})}(X)\times\MapSt\left(X,\left[\mf{t}/\mb{G}_m\right]\right)
\end{tikzcd}
\end{equation}
Supposing now that the pushforwards to $B\mb{G}_m$ all classify the line bundle $L\to X$, we obtain maps
\begin{equation}\label{diag:Higgs-stack-proj}
\begin{tikzcd}
			& \HiggsSt_{\widetilde{G}_\tau}(X,L) \ar[dl,"p_*"']\ar[dr,"\pd_*"] 	\\
	\HiggsSt_{G_{\ad}}(X,L) 	&	& \BunSt_{T/Z(\widetilde{G})}(X)\times H^0(X;\mf{t}\tens L)
\end{tikzcd}
\end{equation}

\begin{eg}\label{eg:GL-Higgs-stack-proj}
In the running $SL_n/GL_n$ example \eqref{diag:GL-proj}, these maps are
\begin{equation}\label{diag:Higgs-stack-proj-GLn}
\begin{tikzcd}
			& \HiggsSt_{GL_n}(X,L) \ar[dl,"p_*"']\ar[dr,"\det\times\tr"] 	\\
	\HiggsSt_{PGL_n}(X,L) 	&	& \PicSt(X)\times H^0(X;L)
\end{tikzcd}
\end{equation}
\end{eg}

Now, choose an isomorphism $t:T\cong \mb{G}_m^s$ such that $Z(\widetilde{G})$ is sent to a product of groups of roots of unity -- this is possible by composing a trivilisation which exhibits $\tau$ as a general embedding with an automorphism of $\mb{G}_m^s$ that exhibits it as a product of standard embeddings $\mu_{i}\to\mb{G}_m$ -- so that $t$ induces an isomorphism
\begin{align}\label{eq:torus-triv}
	T/Z(\widetilde{G}) \cong \frac{\mb{G}_m}{\mu_{i_1}}\times\cdots\times\frac{\mb{G}_m}{\mu_{i_s}}
\end{align}
and by taking $i_j^{\text{th}}$ powers componentwise we obtain an isomorphism $T/Z(\widetilde{G})\cong\mb{G}_m^s$. This isomorphism of groups allows us to further identify
\begin{align}\label{eq:Bun-Pic-stack}
	\BunSt_{T/Z(\widetilde{G})}(X)\cong \PicSt(X)\times\cdots\times\PicSt(X) .
\end{align}

Now, suppose that $X=C$ is a connected Riemann surface, or a smooth connected complex projective algebraic curve. Choose a point $x\in C$ and for $\vec{p}=(p_1,\ldots,p_s)$ denote
\begin{align}\label{eq:Opx}
	\mc{O}(\vec{p}x)=(\mc{O}(p_1 x),\ldots,\mc{O}(p_s x))\in\BunSt_{T/Z(\widetilde{G})}(C)
\end{align}
where we have implicitly used the isomorphism \eqref{eq:Bun-Pic-stack}. Define a lattice by $\Lambda(x)=\{\mc{O}(\vec{p}x)\,|\,\vec{p}\in\mb{Z}^s\}\subset\BunSt_{T/Z(\widetilde{G})}(C)$. Passing to the group of connected components of $\BunSt_{T/Z(\widetilde{G})}(C)$ exhibits an isomorphism
\begin{equation}\label{diag:Opx-lattice}
	\begin{tikzcd}
		\Lambda(x)\ar[rr,"\cong"]\ar[rd,"\iota"] 	& & X_\bullet(T/Z(\widetilde{G})) \\
			& \BunSt_{T/Z(\widetilde{G})}(C)\ar[ru]
	\end{tikzcd}
\end{equation}
and so yields a splitting $\iota_x:X_\bullet(T/Z(\widetilde{G}))\hookrightarrow \BunSt_{T/Z(\widetilde{G})}(C)$.

\begin{defn}\label{def:Higgs-bullet}
Define $\HiggsSt_{\widetilde{G}}^\bullet(C,L)$ to be the pullback of stacks over the \emph{trace-free locus} of the Hitchin base $\{0\}\subset H^0(C;\mf{t}\tens L)$
\begin{equation}\label{diag:Higgs-bullet-def}
\begin{tikzcd}
	\HiggsSt_{\widetilde{G}}^\bullet(C,L)\ar[rr]\ar[d]\arrow[drr, phantom,"\lrcorner", very near end]
		& & \HiggsSt_{\widetilde{G}_\tau}(C,L)\ar[d,"\pd_*"] \\
	X_\bullet(T/Z(\widetilde{G}))\ar[r,"\iota_x"']
		& \BunSt_{T/Z(\widetilde{G})}(C) \ar[r,hook,"\id\times 0"] 	& \BunSt_{T/Z(\widetilde{G})}(C)\times H^0(C;\mf{t}\tens L)
\end{tikzcd}
\end{equation}
\end{defn}

\begin{remark}\label{rk:x-indept}
Note that given another point $y\in C$, the embeddings $\iota_x$ and $\iota_y$ differ by the automorphism of $\BunSt_{T/Z(\widetilde{G})}(C)$ given by tensoring with the $T/Z(\widetilde{G})$-bundle $\mc{O}(\vec{p}(y-x))$ on the component of $\BunSt_{T/Z(\widetilde{G})}$ labelled by $\vec{p}$. Since $\mc{O}(\vec{p}(y-x))\in\BunSt_{T/Z(\widetilde{G})}^0(C)$ we may lift them to $T$-bundles $\tilde{\mc{O}}_{\vec{p}}\in\BunSt_T^0(C)$, additive in $\vec{p}$. These furnish an automorphism of $\HiggsSt_{\widetilde{G}_\tau}(C,L)$ by the action of $\BunSt_T(C)$ on $\HiggsSt_{\widetilde{G}_\tau}(C,L)$, since $T$ is central in $\tilde{G}_\tau$. By uniqueness of pullbacks the stacks $\HiggsSt_{\widetilde{G}}^\bullet(C,L)$ for various choices of $x\in C$ are all isomorphic (although not canonically so).
\end{remark}

\begin{remark}\label{rk:canbun-conventions}
When $L=K_C$, the canonical bundle, I will often omit the line bundle from the notation, e.g.\
\begin{align}\label{eq:Higgs-KC-convention}
	\HiggsSt_G(C,K_C)\equiv\HiggsSt_G(C),\quad \Hitch_{\mf{g}}(C,K_C)=\Hitch_{\mf{g}}(C) ,\quad\text{etc.}
\end{align}
\end{remark}


\subsubsection{Local description over $\Hitch_{\mf{g}}(C)$ and definition of $\mc{M}_{\widetilde{G}}^\bullet(C)$}\label{sss:Hitchinfib}

Recall (Definition \ref{def:Hitchin-base}) that the Hitchin base is defined by $\Hitch_{\mf{g}}(C,L)= H^0(C;\mf{c}_L)$, so that for $\mf{g}_\tau$
\begin{align}\label{eq:Hitch-g-tau}
	\Hitch_{\mf{g}_\tau}(C,L)
		&= H^0(C;\mf{c}_L\times(\mf{t}\tens L))					\\
		&= H^0(C;\mf{c}_L)\times H^0(C;\mf{t}\tens L)	\nonumber\\
		&= \Hitch_{\mf{g}}(C,L)\times H^0(C;\mf{t}\tens L) . \nonumber
\end{align}

Restricting to the case $L=K_C$, the following square commutes (though is {\bf not} cartesian):
\begin{equation}\label{diag:Higgs-square}
\begin{tikzcd}
	\HiggsSt_{\widetilde{G}}^\bullet(C)\ar[r]\ar[d]
		& \HiggsSt_{\widetilde{G}_\tau}(C)\ar[d]	\\
	\Hitch_{\mf{g}}(C)\ar[r,"\id\times 0"']
		& \Hitch_{\mf{g}}(C)\times H^0(C;\mf{t}\tens K_C)
\end{tikzcd}
\end{equation}

\begin{remark}\label{rk:trace-free}
Since I wish to compare $\HiggsSt_{\widetilde{G}_\tau}(C)$ with $\HiggsSt_{\widetilde{G}}(C)$, from now on I will implicitly restrict $\HiggsSt_{\widetilde{G}_\tau}(C)$ to the trace-free locus $\Hitch_{\mf{g}}(C)\times\{0\}\subset\Hitch_{\mf{g}}\times H^0(C;\mf{t}\tens K_C)=\Hitch_{\mf{g}_\tau}(C)$.
\end{remark}

Note that
\begin{equation}\label{eq:centre-G-tau}
\begin{tikzcd}[row sep = tiny]
	Z(\widetilde{G}_\tau)\cong T \ar[r,hook]
		& \widetilde{G}_\tau = \frac{\widetilde{G}\times T}{Z(\widetilde{G})} 	\\
	t\ar[r,mapsto]
		& {[}(1_{\widetilde{G}},t){]}
\end{tikzcd}
\end{equation}
hence $\HiggsSt_{\widetilde{G}_\tau}(C)|_{\Hitch_{\mf{g}}(C)\setminus\Delta}\to\Higgs_{\widetilde{G}_\tau}(C)|_{\Hitch_{\mf{g}}(C)\setminus\Delta}$ is a $Z(\widetilde{G}_\tau)=T$-gerbe, locally trivial over $\Hitch_{\mf{g}}(C)\setminus\Delta$ \cite[Lemma 4.2.ii]{DonPan12}\footnote{While the lemma in \cite{DonPan12} is stated for semisimple groups, the proof of part (ii) holds \emph{mutatis mutandis} for our reductive group. The eagle eyed may observe a very minor typo in the proof -- namely that the centre of a group is contained in but usually not equal to the Weyl group invariants in a maximal torus -- but this typo does not affect the argument of \cite{DonPan12} nor the conclusion I wish to draw.}.

\begin{remark}[Important Remark!]\label{rk:good-locus}
From now on I will assume that we are working away from the discriminant locus \eqref{eq:discr}, and except for in the statement of theorems {\bf I will omit the explicit restriction symbol} ``$|_{\Hitch_{\mf{g}}(C)\setminus\Delta}$''.  Moreover, from now on whenever I refer to some property (e.g.\ a decomposition) of a space related to the moduli of Higgs bundles via a map commuting with the Hitchin map holding ``locally'' I mean that it holds \emph{locally on the Hitchin base, away from the discriminant locus}.
\end{remark}

In other words, locally the stack $\HiggsSt_{\widetilde{G}_\tau}(C)$ decomposes as the product
\begin{align}\label{eq:Higgs-G-tau-local-1}
	\HiggsSt_{\widetilde{G}_\tau}(C)\cong\Higgs_{\widetilde{G}_\tau}(C)\times BT .
\end{align}
Moreover, the coarse moduli space $\Higgs_{\widetilde{G}_\tau}(C)$ splits locally into the product of its neutral component and its group of connected components (since its group of connected components is the free group $\pi_0(\Higgs_{\widetilde{G}_\tau}(C))=\pi_0(\Bun_{T/Z(\widetilde{G})}(C))=X_\bullet(T/Z(\widetilde{G}))$), i.e.\ locally
\begin{align}\label{eq:Higgs-G-tau-local-2}
	\HiggsSt_{\widetilde{G}_\tau}(C) \simeq \Higgs^0_{\widetilde{G}_\tau}(C) \times X_\bullet(T/Z(\widetilde{G})) \times BT .
\end{align}

Next we wish to understand the local structure of $\HiggsSt^\bullet_{\widetilde{G}}(C)$. A (closed) point of $\HiggsSt_{\widetilde{G}}^\bullet(C)$ is given by
\begin{enumerate}[(1)]
	\item a $\widetilde{G}_\tau$-bundle $P\to C$
	\item a Higgs field $\phi\in H^0(C;\ad(P)\tens K_C)$ which is ``trace-free'', and
	\item an isomorphism $\psi: \pd_*(P)\simeq \mc{O}(\vec{p}x)$ (for some $\vec{p}\in\mb{Z}^s$).
\end{enumerate}
More generally, an $S$-point of $\HiggsSt_{\widetilde{G}}^\bullet(C)$ is given by
\begin{enumerate}[(1)]
	\item a $\widetilde{G}_\tau$-bundle $P_S\to C\times S$
	\item a Higgs field $\phi_S\in H^0(C\times S; \pr_C^*(\ad(P)\tens K_C))$ which is ``trace-free'', and
	\item an isomorphism $\psi_S : \pd_*(P_S)\simeq \pr_C^*(\mc{O}(\vec{p}x))$(for some locally constant function $\vec{p}:S\to\mb{Z}^s$).
\end{enumerate}

Note that the action of $BT$ which was previously given by tensoring with the pullback of a $T$-bundle on $S$ must be restricted: now only $T$-bundles $\mc{T}_S\to S$ satisfying
\begin{align}\label{eq:Z-tors-condition}
	\pd_*(\mc{T}_S)\simeq\mc{O}_S
\end{align}
may act on the moduli space. These are exactly those $T$-bundles which are induced from $Z(\widetilde{G})$-bundles via $\tau$,
\begin{equation}\label{eq:Btau-SES}
\begin{tikzcd}
	0\ar[r]	& BZ(\widetilde{G}) \ar[r,"B\tau"]	& BT\ar[r,"B\pd"]	& B(T/Z(\widetilde{G})) \ar[r]	& 0	,
\end{tikzcd}
\end{equation}
so we see that one effect of pulling back is a ``reduction of structure group'' from $BT$ to $BZ(\widetilde{G})$.

To see what happens to the abelian variety component in the local decomposition \eqref{eq:Higgs-G-tau-local-2}, note that the component defined by the cartesian diagram
\begin{equation}\label{diag:Higgs-sc-pullback}
\begin{tikzcd}
	\HiggsSt_{\widetilde{G}}^0(C,L)\ar[r]\ar[d]\arrow[dr, phantom,"\lrcorner", very near end]
		& \HiggsSt_{\widetilde{G}}^\bullet(C,L)\ar[d]	\\
	\ast\ar[r,"0"']
		& X_\bullet(T/Z(\widetilde{G}))
\end{tikzcd}
\end{equation}
may be identified as $\HiggsSt_{\widetilde{G}}^0(C,L)\simeq \HiggsSt_{\widetilde{G}}(C,L)$, the usual moduli of Higgs bundles for the simply-connected simple group $\widetilde{G}$. So locally $\HiggsSt_{\widetilde{G}}^\bullet(C)$ decomposes as
\begin{align}\label{eq:Higgs-bullet-local}
	\HiggsSt_{\widetilde{G}}^\bullet(C) \cong \Higgs_{\widetilde{G}}(C)\times X_\bullet(T/Z(\widetilde{G})) \times BZ(\widetilde{G}) .
\end{align}

The natural map $\HiggsSt_{\widetilde{G}}^\bullet(C)\to\HiggsSt_{\widetilde{G}_\tau}(C)$ is locally
\begin{equation}\label{eq:bullet-to-tau-local}
\begin{tikzcd}[column sep = tiny]
	\Higgs_{\widetilde{G}}(C) \ar[d,hook] & \times
		& X_\bullet(T/Z(\widetilde{G})) \ar[d,hook,"\id"] & \times 
			& BZ(\widetilde{G}) \ar[d,hook,"B\tau"]	\\
	\Higgs_{\widetilde{G}_\tau}^0(C) & \times
		&X_\bullet(T/Z(\widetilde{G})) &\times
			& BT
\end{tikzcd}
\end{equation}
and the projection $\HiggsSt_{\widetilde{G}}^\bullet(C)\to\HiggsSt_{G_{\ad}}(C)$ is locally
\begin{equation}\label{eq:bullet-to-ad-local}
\begin{tikzcd}[column sep = tiny]
	\Higgs_{\widetilde{G}}(C) \ar[d,"\text{isogeny}"'] & \times
		&X_\bullet(T/Z(\widetilde{G})) \ar[d] & \times 
			& BZ(\widetilde{G}) \ar[d]	\\
	\Higgs^0_{G_{\ad}}(C) & \times
		& Z(\widetilde{G}) &\times
			& \ast
\end{tikzcd}
\end{equation}

There is another important stack which admits a map from $\HiggsSt^\bullet_{\widetilde{G}}(C)$, constructed as follows. Since the multiplication map $T\times \widetilde{G}_\tau \to \widetilde{G}_\tau$ is a group homomorphism, it induces an action of $\BunSt_T(C)$ on $\HiggsSt_{\widetilde{G}_\tau}(C)$. Via the splitting $\iota_x: X_\bullet(T)\to\BunSt_T(C)$ we may restrict this to an action of $X_\bullet(T)$, which may be thought of concretely as tensoring Higgs bundles with $\mc{O}(\vec{p}x)$ as $\vec{p}$ ranges over $\mb{Z}^s$. This action evidently restricts to give an action of $X_\bullet(T)$ on $\HiggsSt^\bullet_{\widetilde{G}}(C)$.

\begin{defn}\label{def:M-moduli}
Denote by $\mc{M}_{\mf{g}}(C)$ the stack $\HiggsSt^\bullet_{\widetilde{G}}(C)/X_\bullet(T)$.
\end{defn}

\begin{prop}\label{prop:M-local}
The map $\HiggsSt^\bullet_{\widetilde{G}}(C)|_{\Hitch_{\mf{g}}(C)\setminus\Delta}\to\mc{M}_{\mf{g}}(C)|_{\Hitch_{\mf{g}}(C)\setminus\Delta}$ is locally given by
\begin{equation}\label{eq:bullet-to-M-local}
\begin{tikzcd}[column sep = tiny]
	\HiggsSt_{\widetilde{G}}^\bullet(C)\ar[d] & \simeq
		& \Higgs_{\widetilde{G}}(C) \ar[d,equals] & \times
			& X_\bullet(T/Z(\widetilde{G})) \ar[d] & \times 
				& BZ(\widetilde{G}) \ar[d,equals]	\\
	\mc{M}_{\mf{g}}(C) & \simeq
		& \Higgs_{\widetilde{G}}(C) & \times
			& Z(\widetilde{G}) &\times
				& BZ(\widetilde{G})
\end{tikzcd}
\end{equation}
\end{prop}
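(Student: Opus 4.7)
The plan is to take the local decomposition \eqref{eq:Higgs-bullet-local} as given and compute the induced $X_\bullet(T)$-action on each of its three factors separately, then pass to the quotient factor-by-factor to read off the right-hand side of \eqref{eq:bullet-to-M-local}. The main algebraic input is obtained by applying the cocharacter functor $X_\bullet(-) = \Hom(\mb{G}_m,-)$ to the defining short exact sequence $0 \to Z(\widetilde G) \to T \xrightarrow{\pd} T/Z(\widetilde G) \to 0$: since $Z(\widetilde G)$ is finite and $T$, $T/Z(\widetilde G)$ are connected tori, the induced sequence
\begin{align*}
  0 \to X_\bullet(T) \xrightarrow{\pd_\ast} X_\bullet(T/Z(\widetilde G)) \to Z(\widetilde G) \to 0
\end{align*}
is short exact, and will produce the claimed cokernel $Z(\widetilde G)$ in the middle factor of the quotient.

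For the factor-by-factor computation I would, over a local chart of $\Hitch_{\mf{g}}(C)\setminus\Delta$, present a $\widetilde G_\tau$-bundle classified by $\HiggsSt^\bullet_{\widetilde G}(C)$ as a pair $(P_{\widetilde G}, \mc{L})$ of a $\widetilde G$-bundle and a $T$-bundle modulo the diagonal $Z(\widetilde G)$-action, normalised so that $\pd_\ast\mc{L} \cong \mc{O}(\vec p x)$ for some $\vec p$. Tensoring by $\iota_x(\vec q) = \mc{O}(\vec q x)$ through the central inclusion $T \hookrightarrow \widetilde G_\tau$ sends this pair to $(P_{\widetilde G}, \mc{L}\otimes\mc{O}(\vec q x))$. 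Thus the $\widetilde G$-factor $P_{\widetilde G}$ is unchanged, so the action on $\Higgs_{\widetilde G}(C)$ is trivial; on the $X_\bullet(T/Z(\widetilde G))$-factor the action is translation by $\pd_\ast(\vec q)$; and on the $BZ(\widetilde G)$-factor (the automorphism gerbe remnant after pinning $\pd_\ast\mc{L}$ to a specific bundle via $\psi$) the action is again trivial, since $\iota_x(\vec q)$ is central in $\widetilde G_\tau$ and commutes with every bundle automorphism, in particular with the $Z(\widetilde G) \subset T$ inertia that survives the pinning. Taking the quotient of the local product \eqref{eq:Higgs-bullet-local} by $X_\bullet(T)$ then yields $\Higgs_{\widetilde G}(C) \times Z(\widetilde G) \times BZ(\widetilde G)$ via the cokernel sequence above, and the induced vertical arrows are exactly those of \eqref{eq:bullet-to-M-local}.

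The main obstacle I anticipate is making rigorous the local lift to a pair $(P_{\widetilde G}, \mc{L})$, since the expression $\widetilde G_\tau = (\widetilde G\times T)/Z(\widetilde G)$ does not globally split as a product. This is precisely the reason the statement is only asserted locally away from the discriminant: the identification \eqref{eq:Higgs-bullet-local} itself is a local splitting, arising from the local trivialisation of the $Z(\widetilde G)$-gerbe produced by a choice of Hitchin/Kostant section (cf.\ Remark \ref{rk:implicit-triv} and Proposition \ref{prop:Higgs-J-gerbe}). One must check that central tensoring commutes with this gerbe trivialisation -- equivalently, that the Kostant section is equivariant for the central torus action and so sends the chosen basepoint of the component of $\Higgs^0_{\widetilde G_\tau}(C)$ labelled by $\vec q_0$ to the chosen basepoint of the component labelled by $\vec q_0 + \pd_\ast(\vec q)$, with no residual intra-fibre translation on the $\Higgs_{\widetilde G}(C)$-factor. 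Granted this compatibility, the factorwise quotient computation above completes the proof.
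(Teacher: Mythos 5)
Your proposal is correct and shares the paper's overall skeleton -- observe that the $X_\bullet(T)$-action only moves the component label, so the whole content is the identification of the quotient of component groups -- but it produces the key cokernel by a genuinely different computation. The paper works on the curve: it takes the Kummer-type sequence $1\to Z(\widetilde{G})\to T(\mc{O}_C)\to (T/Z(\widetilde{G}))(\mc{O}_C)\to 1$, uses $H^2(C;T(\mc{O}_C))=0$ and $H^2(C;Z(\widetilde{G}))\cong Z(\widetilde{G})$ to get the four-term sequence $0\to H^1(C;Z(\widetilde{G}))\to\Bun_T(C)\to\Bun_{T/Z(\widetilde{G})}(C)\to Z(\widetilde{G})\to 0$, and then reads off $Z(\widetilde{G})\cong X_\bullet(T/Z(\widetilde{G}))/X_\bullet(T)$ on $\pi_0$. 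You instead get the same lattice quotient purely group-theoretically, by applying $X_\bullet(-)$ to $1\to Z(\widetilde{G})\to T\to T/Z(\widetilde{G})\to 1$; this is more elementary and never invokes $C$, which is a perfectly valid shortcut since only the component group is at stake. What the paper's longer route buys is by-product information that is reused elsewhere: the vanishing $H^2(C;T(\mc{O}_C))=0$ reappears in Proposition \ref{p:quot-T-gerbe}, and the identification of the component label with the obstruction class in $H^2(C;Z(\widetilde{G}))$ ties the $Z(\widetilde{G})$-grading to the ``Stiefel--Whitney'' interpretation used in the physical discussion. Your extra care is also worth noting: the paper simply asserts that the action is trivial on the $\Higgs_{\widetilde{G}}(C)$ and $BZ(\widetilde{G})$ factors, whereas you spell out why centrality of $\iota_x(\vec q)$ kills the action on the inertia and flag the real (mild) issue, namely that the local splitting \eqref{eq:Higgs-bullet-local} must be chosen compatibly with the tensoring action so that no residual translation appears on the $\Higgs_{\widetilde{G}}(C)$ factor; granting that compatibility, which the paper implicitly assumes as well, your factorwise argument is complete.
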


\begin{proof}
The action of $X_\bullet(T)$ on $\Higgs_{\widetilde{G}}(C)$ and $BZ(\widetilde{G})$ is trivial, so it suffices to check this claim for the group of connected components. For this, is suffices to check the corresponding claim for the moduli space of bundles (not Higgs bundles). Consider the generalisation of the Kummer sequence\footnote{This becomes the Kummer sequence for $T=\mb{G}_m$ and $Z(\widetilde{G})=\mu_n$.}
\begin{align}\label{eq:Kum-gen}
	1\to Z(\widetilde{G})\to T(\mc{O}_C)\to (T/Z(\widetilde{G}))(\mc{O}_C)\to 1 .
\end{align}

The $H^0$ row of the corresponding long exact sequence in cohomology is exact (since $C$ is compact/projective); starting at $H^1$ the long exact sequence is
\begin{equation}\label{eq:Kum-LES-1}
\begin{tikzcd}
	0\ar[r]
		& H^1(C;Z(\widetilde{G})) \arrow[r]
			& H^1(C;T(\mc{O}_C)) \arrow[r] \arrow[d, phantom, ""{coordinate, name=Z}]
				& H^1(C;(T/Z(\widetilde{G}))(\mc{O}_C)) \arrow[dll,rounded corners,to path={ -- ([xshift=2ex]\tikztostart.east)|- (Z) [near end]\tikztonodes-| ([xshift=-2ex]\tikztotarget.west)-- (\tikztotarget)}] \\
		& H^2(C;Z(\widetilde{G}))\arrow[r]
			& H^2(C;T(\mc{O}_C))
\end{tikzcd}
\end{equation}

Now, $H^2(C;T(\mc{O}_C))=0$ -- this follows analytically by taking the long exact sequence of the exponential sequence $0\to\mb{Z}\to\mc{O}\to\mc{O}^\times\to 1$ and observing that there are no (2,0)-forms on $C$, and it follows algebraically from the existence of an injective comparison map $H^2_{et}(C;\mb{G}_m)\to H^2(C^{an};\mc{O}_C^\times)$ \cite{DonPan08}.

Identifying $H^2(C;Z(\widetilde{G}))=Z(\widetilde{G})$ canonically and using the identification $H^1(C;T(\mc{O}_C))=\Bun_T(C)$, \eqref{eq:Kum-LES-1} becomes
\begin{align}\label{eq:Kum-LES-2}
	0\to H^1(C;Z(\widetilde{G}))\to\Bun_T(C)\to\Bun_{T/Z(\widetilde{G})}(C)\to Z(\widetilde{G})\to 0 .
\end{align}

The map out of $H^1(C;Z(\widetilde{G}))$ factors through the identity component of $\Bun_T(C)$, and so the content of \eqref{eq:Kum-LES-2} may be split into the two identifications: $\Bun_{T/Z(\widetilde{G})}^0(C)\cong \frac{\Bun_T^0(C)}{H^1(C;Z(\widetilde{G}))}$ and
\begin{align}\label{eq:ZG-components}
	Z(\widetilde{G})
		\cong \frac{\pi_0(\Bun_{T/Z(\widetilde{G})}(C))}{\pi_0(\Bun_T(C))}
		= \frac{X_\bullet(T/Z(\widetilde{G}))}{X_\bullet(T)} .
\end{align}
\end{proof}

\begin{eg}\label{eg:dmodn}
In the running example with $\widetilde{G}=SL_n$, $\mc{M}_{\mf{sl}_n}(C)$ may be thought of as encoding the observation that the moduli spaces $\Higgs_{SL_n}^d$ depend only on $d \mod n$, explicitly because tensoring with the line bundle $\mc{O}(x)$ is an isomorphism $\Higgs_{SL_n}^d \cong \Higgs_{SL_n}^{d+n}$.
\end{eg}


\subsection{Comparing sheaves of regular centralisers}\label{ss:compareJ}

In \eqref{eq:bullet-to-tau-local} we observed that $\Higgs_{\widetilde{G}}(C)$ appears as an abelian subvariety of $\Higgs_{\widetilde{G}_\tau}^0(C)$. Since under shifted Cartier duality (Definition \ref{def:Cartier-dual}) subobjects become quotient objects, and under Langlands duality simply-connected groups are sent to adjoint groups, it is natural to guess that $\Higgs_{G_{\ad}}^0(C)$ may be realised as a quotient of $\Higgs_{\widetilde{G}_\tau}^0(C)$. To see that this is indeed the case, we will compare the sheaves of regular centralisers $J^{\widetilde{G}_\tau}$ and $J^{\widetilde{G}}$.

\begin{prop}\label{prop:J-products}
\begin{enumerate}[(1)]
	\item[]
	\item $J^{G_1\times G_2} = J^{G_1}\times J^{G_2}$
	\item If $T\cong(\mb{C}^\times)^n$ then $J^T=T$.
	\item There is a short exact sequence of sheaves
	\begin{align}\label{eq:J-tau-SES}
		0\to Z(\widetilde{G}) \to J^{\widetilde{G}} \times T \to J^{\widetilde{G}_\tau}\to0 .
	\end{align}
\end{enumerate}
\end{prop}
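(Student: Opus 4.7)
The plan is to handle the three parts in increasing order of substance, with (1) and (2) being straightforward unwindings of the definition and (3) the main content.

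For (1), the key is that the entire Chevalley construction is compatible with products. The adjoint action of $G_1\times G_2$ on $\mf{g}_1\oplus\mf{g}_2$ is block-diagonal, so $\mf{h}_{G_1\times G_2}=\mf{h}_{G_1}\oplus\mf{h}_{G_2}$, $W_{G_1\times G_2}=W_{G_1}\times W_{G_2}$, and therefore $\mf{c}_{G_1\times G_2}=\mf{c}_{G_1}\times\mf{c}_{G_2}$. A pair $(x_1,x_2)$ is regular iff each $x_i$ is regular in $\mf{g}_i$, and its centraliser is $Z_{G_1}(x_1)\times Z_{G_2}(x_2)$, so $I^{G_1\times G_2}=I^{G_1}\times I^{G_2}$ and the same holds after restricting to regular loci. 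One may pick a Kostant section of product form $\kappa_1\times\kappa_2$ (using product principal nilpotents $x_\pm^{(1)}+x_\pm^{(2)}$), and pulling back yields $J^{G_1\times G_2}=J^{G_1}\times J^{G_2}$. For (2), a torus acts trivially on its Lie algebra, so every element is regular with centraliser $T$, the Weyl group is trivial, $\mf{c}=\mf{t}$, and $x_\pm=0$ forces the Kostant section to be $\id_{\mf{t}}$; hence $J^T=\kappa^*I^{reg}=T\times\mf{t}\to\mf{t}$ is the constant group scheme with fibre $T$, i.e.\ $J^T=T$.

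For (3), combining (1) and (2) gives $J^{\widetilde{G}\times T}=J^{\widetilde{G}}\times T$ as commutative group schemes over $\mf{c}\times\mf{t}=\mf{c}_\tau$. The map $\widetilde{G}\times T\to\widetilde{G}_\tau$ is an isogeny with kernel the antidiagonal copy of $Z(\widetilde{G})$, embedded via $z\mapsto(z,\tau(z)\inv)$; it induces an isomorphism on Lie algebras, maximal tori, Weyl groups, and adjoint quotients, so the same Kostant section computes both $J^{\widetilde{G}\times T}$ and $J^{\widetilde{G}_\tau}$. For any regular $x\in\mf{g}_\tau=\mf{g}\oplus\mf{t}$, the centraliser $Z_{\widetilde{G}_\tau}(x)$ equals the image of $Z_{\widetilde{G}\times T}(x)$ under the quotient map, and since $Z(\widetilde{G})$ is contained in every regular centraliser, the kernel of this fibrewise projection is precisely the antidiagonal $Z(\widetilde{G})$. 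Assembling these fibrewise short exact sequences over $\mf{c}_\tau$ yields the claimed sequence
\begin{align*}
0\to Z(\widetilde{G})\to J^{\widetilde{G}}\times T\to J^{\widetilde{G}_\tau}\to 0.
\end{align*}
This is an application of the mechanism of Lemma \ref{l:J-SES} --- stated there for isogenies of simple groups, but proved locally on the adjoint quotient and therefore applicable verbatim to the reductive isogeny $\widetilde{G}\times T\to\widetilde{G}_\tau$.

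The main technical point, and the only place where care is required, is the description of the leftmost map: one must check that $Z(\widetilde{G})$ embeds in $J^{\widetilde{G}}\times T$ by the antidiagonal $z\mapsto(z,\tau(z)\inv)$ (reflecting the particular central subgroup being quotiented), and that this really does exhaust the kernel $Z_{\widetilde{G}\times T}(x)\to Z_{\widetilde{G}_\tau}(x)$ pointwise. Both are clear from the isogeny, using that $Z(\widetilde{G})$ lies in the scheme-theoretic centraliser of every element, so the kernel on any fibre is exactly $\ker(\widetilde{G}\times T\to\widetilde{G}_\tau)=Z(\widetilde{G})$. Beyond this the argument is a clean consequence of the functoriality of $J$ with respect to products, tori, and central isogenies.
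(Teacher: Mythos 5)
Your proof is correct and follows essentially the same route as the paper: parts (1) and (2) via the product decomposition of the adjoint action and its triviality for a torus, and part (3) by feeding these into the central-isogeny mechanism of Lemma \ref{l:J-SES} applied to $\widetilde{G}\times T\to\widetilde{G}_\tau$, exactly as the paper does (the paper simply states ``by (1) and (2) this is a special case of Lemma \ref{l:J-SES}''). Your extra care in identifying the kernel $z\mapsto(z,\tau(z)\inv)$ and checking pointwise exactness of the centraliser maps is a more detailed write-up of the same argument, not a different one.
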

\begin{proof}
\begin{enumerate}[(1)]
	\item[]
	\item Follows from the fact that the Lie algebra of $G_1\times G_2$ is $\mf{g}_1\oplus\mf{g}_2$, and the adjoint action factors as $G_1\times G_2\to\End(\mf{g}_1)\oplus\End(\mf{g}_2)\subset\End(\mf{g}_1\oplus\mf{g}_2)$.
	\item Since $T$ is abelian the adjoint action is trivial, so $Z_T(x)=T$ for every $x\in\mf{t}$.
	\item By (1) and (2) this is a special case of Lemma \ref{l:J-SES}.
\end{enumerate}
\end{proof}

\begin{prop}\label{prop:Higgs-ad-quotient}
There are isomorphisms of abelian schemes $\Higgs_{G_{\ad}}^0(C)|_{\Hitch_{\mf{g}}(C)\setminus\Delta}\cong\left.\frac{\Higgs_{\widetilde{G}_\tau}^0(C)}{\Bun_T^0(C)}\right|_{\Hitch_{\mf{g}}(C)\setminus\Delta}$ and $\left.\frac{\Bun_T^0(C)}{H^1(C;Z(\widetilde{G}))}\right|_{\Hitch_{\mf{g}}(C)\setminus\Delta}\cong\left.\frac{\Higgs_{\widetilde{G}_\tau}^0(C)}{\Higgs_{\widetilde{G}}(C)}\right|_{\Hitch_{\mf{g}}(C)\setminus\Delta}$.
\end{prop}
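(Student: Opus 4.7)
My plan is to obtain both isomorphisms from a single short exact sequence of abelian varieties, by pushing the short exact sequence of sheaves of regular centralisers of Proposition \ref{prop:J-products}(3) into cohomology on $C$ and then extracting the two quotients.

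First I fix a very regular characteristic $\sigma\in\Hitch_{\mf{g}}(C)\setminus\Delta$ and pull back the short exact sequence
\begin{align*}
0\to Z(\widetilde G)\to J^{\widetilde G}\times T\to J^{\widetilde G_\tau}\to 0
\end{align*}
of Proposition \ref{prop:J-products}(3) (regarded as sheaves over $[\mf c/\mb{G}_m]$) to $C$ along $\sigma$, giving the analogous sequence $0\to Z(\widetilde G)_C\to J^{\widetilde G}_\sigma\times T_C\to J^{\widetilde G_\tau}_\sigma\to 0$. Taking the long exact sequence in sheaf cohomology on $C$, using $\Gamma(C;T)=T$ since $C$ is proper and connected, and identifying $H^1(C;J^{\widetilde G}_\sigma)\cong\Higgs_{\widetilde G}(C)_\sigma$ and $H^1(C;T)\supset\Bun_T^0(C)$, the key piece is
\begin{align*}
\Gamma(C;J^{\widetilde G}_\sigma)\times T\xrightarrow{\,\mu\,}\Gamma(C;J^{\widetilde G_\tau}_\sigma)\xrightarrow{\,\partial\,} H^1(C;Z(\widetilde G))\to H^1(C;J^{\widetilde G}_\sigma)\times H^1(C;T)\to H^1(C;J^{\widetilde G_\tau}_\sigma).
\end{align*}

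The main technical point is to show $\partial=0$, i.e.\ that $\mu$ is surjective. This is the direct analogue of Lemma \ref{lemma:J-surj} for the isogeny $\widetilde G\times T\to\widetilde G_\tau$ with kernel $Z(\widetilde G)$ embedded diagonally via $(\iota,\tau)$. Concretely, using the Donagi--Gaitsgory description (Remark \ref{rk:alt-J}), the cameral cover $\tilde C_\sigma$ is smooth (Remark \ref{rk:very-reg-geom}) and proper, so maps to affine tori are locally constant; thus $\Gamma(C;J^{\widetilde G}_\sigma\times T)$ and $\Gamma(C;J^{\widetilde G_\tau}_\sigma)$ reduce to Weyl-invariants in $(\widetilde H\times T)^{\pi_0(\tilde C_\sigma)}$ and $((\widetilde H\times T)/Z(\widetilde G))^{\pi_0(\tilde C_\sigma)}$ respectively, cut down by the root-vanishing condition \eqref{eq:T-sheaf}. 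Exactly as in the proof of Lemma \ref{lemma:J-surj}, the very regular hypothesis forces every root to have a ramification point, and invoking Proposition \ref{thm:fixed-lifts} (which is purely a statement about lifting fixed points through isogenies of tori) gives surjectivity of $\mu$.

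Once $\partial=0$, the long exact sequence yields a short exact sequence of commutative group sheaves
\begin{align*}
0\to H^1(C;Z(\widetilde G))\to \Higgs_{\widetilde G}(C)_\sigma\times\Bun_T(C)\to \Higgs^0_{\widetilde G_\tau}(C)_\sigma\times(\text{finite})\to 0.
\end{align*}
Restricting the middle term to the identity component and using that $H^1(C;Z(\widetilde G))$ is finite (hence maps into the identity component and has finite-index image in each factor), I obtain the short exact sequence of abelian varieties
\begin{align*}
0\to H^1(C;Z(\widetilde G))\to \Higgs_{\widetilde G}(C)_\sigma\times\Bun_T^0(C)\to \Higgs^0_{\widetilde G_\tau}(C)_\sigma\to 0.\tag{$\ast$}
\end{align*}
Quotienting $(\ast)$ by $\Bun_T^0(C)$ gives $\Higgs^0_{\widetilde G_\tau}(C)_\sigma/\Bun_T^0(C)\cong\Higgs_{\widetilde G}(C)_\sigma/H^1(C;Z(\widetilde G))$, which by Theorem \ref{thm:PrymComp} is $\Higgs^0_{G_{\ad}}(C)_\sigma$, giving the first isomorphism. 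Quotienting $(\ast)$ by $\Higgs_{\widetilde G}(C)_\sigma$ instead gives the second isomorphism. Both constructions are functorial in $\sigma\in\Hitch_{\mf g}(C)\setminus\Delta$, so the pointwise isomorphisms upgrade to isomorphisms of abelian schemes over the very regular locus.

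\textbf{Main obstacle.} The only nontrivial step is the surjectivity of $\mu$ on global sections (i.e.\ the vanishing of $\partial$). Everything else is a formal manipulation of the long exact sequence and a reduction to identity components. Since the proof of surjectivity is a straightforward adaptation of Lemma \ref{lemma:J-surj} (with $T$ contributing no obstruction because it is central and connected, so its Weyl invariants are all of $T$), I do not anticipate any genuine difficulty beyond transcription.
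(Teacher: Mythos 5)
Your proposal is essentially correct, and its skeleton is the same as the paper's: pull back the sequence $0\to Z(\widetilde G)\to J^{\widetilde G}\times T\to J^{\widetilde G_\tau}\to 0$ of Proposition \ref{prop:J-products}(3), take the long exact sequence over $C$, split it into two short exact sequences, obtain $\Higgs^0_{\widetilde G_\tau}(C)_\sigma\cong(\Higgs_{\widetilde G}(C)_\sigma\times\Bun_T^0(C))/H^1(C;Z(\widetilde G))$, and then read off both quotients, using Theorem \ref{thm:PrymComp} for the first. Where you genuinely diverge is in how the splitting is justified: you prove vanishing of the connecting map by showing surjectivity of $\Gamma(C;J^{\widetilde G}_\sigma\times T)\to\Gamma(C;J^{\widetilde G_\tau}_\sigma)$ via the Donagi--Gaitsgory description and a Lemma \ref{lemma:J-surj}-style ramification argument, whereas the paper proves the equivalent statement at the other end of the boundary map, namely injectivity of $H^1(C;Z(\widetilde G))\to\Bun_T(C)$, which was already established from the Kummer-type sequence in the proof of Proposition \ref{prop:M-local}. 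The paper's route is softer: it needs no cameral-cover description of $J^{\widetilde G_\tau}$ for the reductive group $\widetilde G_\tau$ and no very-regularity input at this step, only that $Z(\widetilde G)$ embeds into the central torus $T$. Your route works too, but note that Proposition \ref{thm:fixed-lifts} does not literally apply to $\widetilde H\times T$ (it is not the maximal torus of a simply-connected semisimple group, and the proof there uses that the cocharacter lattice is the coroot lattice); fortunately for this particular isogeny the lifting is immediate, since any $s_\alpha$-fixed class in $(\widetilde H\times T)/Z(\widetilde G)$ has an $s_\alpha$-fixed representative because the kernel meets the quotient relation only through $\tau$, which is injective into the $W$-trivial factor $T$ -- so your step is true but should be argued directly rather than by citation. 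Two further small repairs: the cokernel you write as ``$\Higgs^0_{\widetilde G_\tau}(C)_\sigma\times(\text{finite})$'' is not of that form (its component group is essentially $X_\bullet(T)$, infinite), though this is harmless since you immediately pass to identity components; and the final quotient identifications tacitly use that $H^1(C;Z(\widetilde G))$ injects into each factor (into $\Higgs_{\widetilde G}(C)_\sigma$ by Lemma \ref{lemma:J-surj}, into $\Bun_T^0(C)$ by the Kummer argument), which you should cite. Finally, the upgrade from fibrewise statements to isomorphisms of abelian schemes over $\Hitch_{\mf g}(C)\setminus\Delta$ deserves a sentence: the paper runs the same exact-sequence argument relatively, via the derived pushforward along $\left(\Hitch_{\mf g}(C)\setminus\Delta\right)\times C\to\Hitch_{\mf g}(C)\setminus\Delta$ of the pulled-back sequence, rather than appealing to functoriality in $\sigma$ alone.
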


\begin{proof}
Pulling the short exact sequence \eqref{eq:J-tau-SES} back via some cameral cover $\tilde{C}_\sigma$ of $C$ yields
\begin{equation}\label{eq:J-tau-LES}
\begin{tikzcd}
	0\ar[r]
		& \Gamma(C;Z(\widetilde{G})) \arrow[r]
			& \Gamma(C;J^{\widetilde{G}}_\sigma\times T) \arrow[r] \arrow[d, phantom, ""{coordinate, name=Z}]
				& \Gamma(C;J^{\widetilde{G}_\tau}_\sigma) \arrow[dll,rounded corners,to path={ -- ([xshift=2ex]\tikztostart.east)|- (Z) [near end]\tikztonodes-| ([xshift=-2ex]\tikztotarget.west)-- (\tikztotarget)}] \\
		&H^1(C;Z(\widetilde{G})) \arrow[r]
			& H^1(C;J^{\widetilde{G}}_\sigma\times T) \arrow[r] \arrow[d, phantom, ""{coordinate, name=Y}]
				& H^1(C;J^{\widetilde{G}_\tau}_\sigma) \arrow[dll,rounded corners,to path={ -- ([xshift=2ex]\tikztostart.east)|- (Y) [near end]\tikztonodes-| ([xshift=-2ex]\tikztotarget.west)-- (\tikztotarget)}] \\
		&H^2(C;Z(\widetilde{G}))=Z(\widetilde{G}) \ar[r]
			& 0
\end{tikzcd}
\end{equation}
where the vanishing of $H^2(C;J^{\widetilde{G}}_\sigma\times T)$ is observed in \cite[\S5]{DonPan12}.

Let $K_\tau=\ker(H^1(C;J^{\widetilde{G}_\tau}_\sigma)\to H^2(C;Z(\widetilde{G})))$.\footnote{Note that this is not necessarily connected, i.e.\ is not necessarily the neutral component.} Then \eqref{eq:J-tau-LES} becomes
\begin{equation}\label{eq:K-tau-LES}
\begin{tikzcd}
	0\ar[r]
		& Z(\widetilde{G}) \arrow[r]
			& \Gamma(C;J^{\widetilde{G}}_\sigma)\times T \arrow[r] \arrow[d, phantom, ""{coordinate, name=Z}]
				& \Gamma(C;J^{\widetilde{G}_\tau}_\sigma) \arrow[dll,rounded corners,to path={ -- ([xshift=2ex]\tikztostart.east)|- (Z) [near end]\tikztonodes-| ([xshift=-2ex]\tikztotarget.west)-- (\tikztotarget)}] \\
		&H^1(C;Z(\widetilde{G})) \arrow[r]
			& H^1(C;J^{\widetilde{G}}_\sigma)\times H^1(C;T) \arrow[r]
				& K_\tau \ar[r]
					& 0 .
\end{tikzcd}
\end{equation}

Since the map $H^1(C;Z(\widetilde{G}))\to H^1(C;T)=\Bun_T(C)$ is itself an embedding (and in fact it factors through $H^1(C;T)^0 = \Bun_T^0(C)$), the above sequence splits into two short exact sequences, yielding
\begin{align}\label{eq:K-tau-quot}
	K_\tau = \frac{\Higgs_{\widetilde{G}}(C)\times\Bun_T(C)}{H^1(C;Z(\widetilde{G}))}
\end{align}
and (restricting the the neutral component)
\begin{align}\label{eq:Higgs-tau-0-quot}
	\Higgs_{\widetilde{G}_\tau}^0(C) = \frac{\Higgs_{\widetilde{G}}(C)\times\Bun^0_T(C)}{H^1(C;Z(\widetilde{G}))} .
\end{align}

The isomorphism $\frac{\Bun_T^0(C)}{H^1(C;Z(\widetilde{G}))}\cong\frac{\Higgs_{\widetilde{G}_\tau}^0(C)}{\Higgs_{\widetilde{G}}(C)}$ follows immediately from \eqref{eq:Higgs-tau-0-quot}, and the isomorphism $\Higgs_{G_{\ad}}^0(C)\cong\frac{\Higgs_{\widetilde{G}_\tau}^0(C)}{\Bun_T^0(C)}$ follows from \eqref{eq:Higgs-tau-0-quot} and the identification $\Higgs_{G_{\ad}}^0(C)\cong\frac{\Higgs_{\widetilde{G}}(C)}{H^1(C;Z(\widetilde{G}))}$ of Theorem \ref{thm:PrymComp}. To conclude the proof, note that the steps of the above local calculation can be replicated globally on $\textbf{H}^\circ:=\Hitch_{\mf{g}}(C)\setminus\Delta$. Namely, pullback the short exact sequence \eqref{eq:J-tau-SES} along the canonical evaluation map $\textbf{H}^\circ\times C\to\tot(\mf{c}_{K_C})$, and consider the long exact sequence of the derived pushforward along $\textbf{H}^\circ\times C\to \textbf{H}^\circ$. Since the embedding $H^1(C;Z(\widetilde{G}))\to\Bun_T(C)$ is insensitive to the Hitchin base, this gives a global identification $\Higgs_{\widetilde{G}_\tau}^0(C)|_{\textbf{H}^\circ}=\frac{\Higgs_{\widetilde{G}}(C)|_{\textbf{H}^\circ}\times\Bun^0_T(C)}{H^1(C;Z(\widetilde{G}))}$, and the result follows.
\end{proof}


\subsection{Dualising $\HiggsSt_{\widetilde{G}}^\bullet(C)$}\label{ss:bullet-dual}

At a first glance one might expect that the stacks $\HiggsSt_{\widetilde{G}}^\bullet(C)$ will provide the correct generalisation of the Langlands duality results of \cite{HauTha03,DonPan12}. In this section we will see that this is not quite correct, since by remembering all of the connected components of $\Higgs_{\widetilde{G}_\tau}(C)$ this stack is keeping track of too much information (or, perhaps better, it is keeping track of components and automorphisms in a non-symmetric manner). Regardless, I will describe the structure of the shifted Cartier dual $\HiggsSt_{\widetilde{G}}^\bullet(C)$ so that in Section \ref{ss:M-dual} I can show that the moduli space $\mc{M}_{\mf{g}}(C)$ \emph{is} well-behaved under shifted Cartier duality.

As a first step let us ``measure the difference'' between the stacks $\HiggsSt_{\widetilde{G}}^\bullet(C)$ and $\HiggsSt_{\widetilde{G}_\tau}(C)$, i.e.\ :

\begin{prop}\label{prop:measure-diff}
There are isomorphisms of commutative group stacks
\begin{align}\label{eq:Higgs-tau-BunTZ-quot}
	\HiggsSt_{\widetilde{G}_\tau}(C)/\HiggsSt_{\widetilde{G}}^\bullet(C)|_{\Hitch_{\mf{g}}(C)\setminus\Delta}
		\cong \left.\frac{\BunSt_{T/Z(\widetilde{G})}(C)}{X_\bullet(T/Z(\widetilde{G}))}\right|_{\Hitch_{\mf{g}}(C)\setminus\Delta}
		\cong\BunSt_{T/Z(\widetilde{G})}^0(C)|_{\Hitch_{\mf{g}}(C)\setminus\Delta}.
\end{align}
\end{prop}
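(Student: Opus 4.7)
The plan is to realize both claimed isomorphisms as consequences of standard short exact sequence manipulations in the $2$-category of commutative group stacks, starting from the defining pullback square \eqref{diag:Higgs-bullet-def}.

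\textbf{Step 1: Set up $\pd_\ast$ as a surjective homomorphism of group stacks.} Because $\widetilde{G}_\tau = (\widetilde{G}\times T)/Z(\widetilde{G})$ is abelian-over-$\widetilde{G}$ in a precise sense and $\pd:\widetilde{G}_\tau\twoheadrightarrow T/Z(\widetilde{G})$ is a group homomorphism, the induced map on mapping stacks is a homomorphism of commutative group stacks. After the implicit trace-free restriction of Remark \ref{rk:trace-free}, this yields a homomorphism
\[
\pd_\ast : \HiggsSt_{\widetilde{G}_\tau}(C) \to \BunSt_{T/Z(\widetilde{G})}(C).
\]
Surjectivity (locally on $\Hitch_{\mf{g}}(C)\setminus\Delta$) follows from the exact sequence $1\to G_{\ad}\to\widetilde{G}_\tau\to T/Z(\widetilde{G})\to 1$, whose obstruction class lives in a degree-$2$ cohomology group that vanishes for curves (by the argument already used in the proof of Proposition \ref{prop:M-local}, where $H^2(C;T(\mc{O}_C))=0$ played the analogous role), together with the observation that any $T/Z(\widetilde{G})$-bundle can be decorated with the zero Higgs field for trace-freeness.

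\textbf{Step 2: Deduce the first isomorphism from the pullback square.} By Definition \ref{def:Higgs-bullet}, $\HiggsSt_{\widetilde{G}}^\bullet(C)$ is precisely the preimage under $\pd_\ast$ of the embedded sublattice $\iota_x(X_\bullet(T/Z(\widetilde{G})))\subset\BunSt_{T/Z(\widetilde{G})}(C)$. In the $2$-category of commutative group stacks, taking preimage of a subgroup under a surjective homomorphism is an exact operation, so applying the snake lemma (see Remark \ref{rk:cx-of-sheaves}; concretely, one works with the equivalent $\Ch^{[-1,0]}$ model and applies the usual homological argument) to the diagram
\begin{equation*}
\begin{tikzcd}
1 \ar[r] & \HiggsSt_{\widetilde{G}}^\bullet(C) \ar[r] \ar[d,"\pd_\ast"']
  & \HiggsSt_{\widetilde{G}_\tau}(C) \ar[r] \ar[d,"\pd_\ast"]
  & \HiggsSt_{\widetilde{G}_\tau}(C)/\HiggsSt_{\widetilde{G}}^\bullet(C) \ar[r] \ar[d,dashed] & 1 \\
1 \ar[r] & X_\bullet(T/Z(\widetilde{G})) \ar[r,hook,"\iota_x"']
  & \BunSt_{T/Z(\widetilde{G})}(C) \ar[r]
  & \BunSt_{T/Z(\widetilde{G})}(C)/X_\bullet(T/Z(\widetilde{G})) \ar[r] & 1
\end{tikzcd}
\end{equation*}
produces the desired canonical isomorphism on cokernels.

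\textbf{Step 3: Deduce the second isomorphism from the splitting $\iota_x$.} The map $\iota_x$ was defined in \eqref{diag:Opx-lattice} as a section of the connected-components projection $\BunSt_{T/Z(\widetilde{G})}(C)\to\pi_0(\BunSt_{T/Z(\widetilde{G})}(C))=X_\bullet(T/Z(\widetilde{G}))$. Hence it splits the short exact sequence
\[
0 \to \BunSt_{T/Z(\widetilde{G})}^0(C) \to \BunSt_{T/Z(\widetilde{G})}(C) \to X_\bullet(T/Z(\widetilde{G})) \to 0,
\]
and quotienting by the complementary summand yields $\BunSt_{T/Z(\widetilde{G})}^0(C)$.

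\textbf{Main obstacle.} The only non-formal point is the local surjectivity of $\pd_\ast$ in Step 1, which amounts to verifying that $\widetilde{G}_\tau$-bundles with trace-free Higgs field exhaust $T/Z(\widetilde{G})$-bundles; once this is in hand the rest is essentially homological bookkeeping with commutative group stacks. The restriction to $\Hitch_{\mf{g}}(C)\setminus\Delta$ is needed only insofar as it underlies the local-on-the-Hitchin-base identifications of Remark \ref{rk:good-locus} and the trivializations used in Proposition \ref{prop:Higgs-ad-quotient}; the exact-sequence argument itself is formal and global over this locus.
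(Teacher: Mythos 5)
Your overall architecture is the same as the paper's: the second isomorphism comes from the splitting $\iota_x$ of the $\pi_0$-projection, and the first is obtained by composing the defining pullback square \eqref{diag:Higgs-bullet-def} with the square exhibiting $X_\bullet(T/Z(\widetilde{G}))$ as the kernel of $\BunSt_{T/Z(\widetilde{G})}(C)\to\BunSt_{T/Z(\widetilde{G})}(C)/X_\bullet(T/Z(\widetilde{G}))$, so that everything reduces to exactness of the resulting sequence, i.e.\ to surjectivity of the composite map out of $\HiggsSt_{\widetilde{G}_\tau}(C)$. The genuine gap is in your Step 1, which is precisely the one non-formal point. First, the kernel of $\pd$ is $\widetilde{G}$, not $G_{\ad}$ (see \eqref{eq:del-defn}); since this kernel is nonabelian, the lifting obstruction is not a class in an abelian degree-two cohomology group analogous to $H^2(C;T(\mc{O}_C))$, so the vanishing argument you cite does not apply as stated. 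Second, and more seriously, the surjectivity must be verified for group stacks fibred over $\Hitch_{\mf{g}}(C)\setminus\Delta$: for each very regular $\sigma$ and each $T/Z(\widetilde{G})$-bundle you need a $\widetilde{G}_\tau$-Higgs bundle with characteristic $(\sigma,0)$ mapping to it. ``Decorating with the zero Higgs field'' produces a Higgs bundle whose characteristic is $0$, which lies in the discriminant locus and hence outside the locus over which the proposition is asserted, so your construction does not produce any points of the relevant fibres.

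The repair is exactly what the paper does: exactness is checked via the local description over $\Hitch_{\mf{g}}(C)\setminus\Delta$ from Section \ref{sss:Hitchinfib}. Locally $\HiggsSt_{\widetilde{G}_\tau}(C)\simeq\Higgs^0_{\widetilde{G}_\tau}(C)\times X_\bullet(T/Z(\widetilde{G}))\times BT$ and $\HiggsSt^\bullet_{\widetilde{G}}(C)\simeq\Higgs_{\widetilde{G}}(C)\times X_\bullet(T/Z(\widetilde{G}))\times BZ(\widetilde{G})$, with inclusion as in \eqref{eq:bullet-to-tau-local}; the composite map is, on the abelian-scheme factor, the surjection $\Higgs^0_{\widetilde{G}_\tau}(C)_\sigma\to\Bun^0_{T/Z(\widetilde{G})}(C)$ coming from \eqref{eq:Higgs-tau-0-quot} (equivalently Proposition \ref{prop:Higgs-ad-quotient}), and on automorphism groups it is $T\to T/Z(\widetilde{G})$; both are surjective with kernels exactly the corresponding factors of $\HiggsSt^\bullet_{\widetilde{G}}(C)$, giving exactness in the sense of Definition \ref{def:exact}. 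Once this local verification (or an equivalent cohomological argument with the regular centralisers $J^{\widetilde{G}_\tau}$) replaces your Step 1, the bookkeeping in your Steps 2 and 3 is correct and coincides with the paper's proof.
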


\begin{proof}
The second isomorphism is immediate -- we have already seen that a choice of point $x\in C$ gives a splitting of the map $\BunSt_{T/Z(\widetilde{G})}(C)\to\pi_0(\BunSt_{T/Z(\widetilde{G})}(C))=X_\bullet(T/Z(\widetilde{G}))$. Hence it suffices to prove the first isomorphism, which follows by composing the pullback square \eqref{diag:Higgs-bullet-def} with the pullback square
\begin{equation}\label{diag:Opx-pullback}
\begin{tikzcd}
	X_\bullet(T/Z(\widetilde{G}))\ar[r]\ar[d]\ar[dr,phantom,"\lrcorner", very near end]
		& \BunSt_{T/Z(\widetilde{G})}(C)\ar[d]	\\
	0\ar[r]
		& \frac{\BunSt_{T/Z(\widetilde{G})}(C)}{X_\bullet(T/Z(\widetilde{G}))}
\end{tikzcd}
\end{equation}
to obtain the pullback square
\begin{equation}\label{diag:Higgs-bullet-fibre-seq}
\begin{tikzcd}
	\HiggsSt_{\widetilde{G}}^\bullet(C)\ar[r]\ar[d]\ar[dr,phantom,"\lrcorner", very near end]
		& \HiggsSt_{\widetilde{G}_\tau}(C)\ar[d]	\\
	0\ar[r]
		& \frac{\BunSt_{T/Z(\widetilde{G})}(C)}{X_\bullet(T/Z(\widetilde{G}))}
\end{tikzcd}
\end{equation}
This can be seen to yield a short exact sequence of commutative group stacks via the local description of the maps given in Section \ref{sss:Hitchinfib}.
\end{proof}

Now, consider the following short exact sequences of commutative group stacks and their coarse moduli spaces:
\begin{equation}\label{diag:coarse-moduli}
\begin{tikzcd}[row sep = tiny]
	0 \ar[r]
		& \HiggsSt_{\widetilde{G}}^\bullet(C)\ar[r]
			& \HiggsSt_{\widetilde{G}_\tau}(C)\ar[r]
				& \BunSt_{T/Z(\widetilde{G})}^0(C) \ar[r]
					& 0 				\\
	0 \ar[r]
		& \Higgs_{\widetilde{G}}^\bullet(C)\ar[r]
			& \Higgs_{\widetilde{G}_\tau}(C)\ar[r]
				& \Bun_{T/Z(\widetilde{G})}^0(C) \ar[r]
					& 0 				
\end{tikzcd}
\end{equation}
Using Example \ref{eg:BunT-dual} and the identifications given in Appendix \ref{s:structure-results} (as well as another dualisation result from \cite{DonPan12}, namely $^L\HiggsSt^0 = \Higgs^D $) these dualise to the short exact sequences
\begin{equation}\label{diag:coarse-moduli-dual}
\begin{tikzcd}[row sep = tiny]
	0 \ar[r]
		& \Bun_{^L(T/Z(\widetilde{G}))}(C)\ar[r]
			& \HiggsSt_{(\widetilde{^LG})_{^L\tau}}(C)\ar[r]
				& \HiggsSt^\bullet_{\widetilde{G}}(C)^D \ar[r]
					& 0 				\\
	0 \ar[r]
		& \Bun_{^L(T/Z(\widetilde{G}))}^0(C)\ar[r]
			& \HiggsSt_{(\widetilde{^LG})_{^L\tau}}^0(C)\ar[r]
				& \Higgs^\bullet_{\widetilde{G}}(C)^D\ar[r]
					& 0 
\end{tikzcd}
\end{equation}

From the exact sequences \eqref{diag:coarse-moduli-dual}, we are led to study the quotient stacks $\frac{\HiggsSt_{\widetilde{G}_\tau}(C)}{\Bun_T(C)}$ and $\frac{\HiggsSt^0_{\widetilde{G}_\tau}(C)}{\Bun^0_T(C)}$. By Proposition \ref{prop:Higgs-ad-quotient}, $\frac{\Higgs^0_{\widetilde{G}_\tau}(C)}{\Bun^0_T(C)}\cong\Higgs_{G_{\ad}}^0(C)$, so that $\frac{\HiggsSt^0_{\widetilde{G}_\tau}(C)}{\Bun^0_T(C)}$ is a $T$-gerbe over $\Higgs_{G_{\ad}}^0(C)$. This result extends to the non-neutral connected components as well:

\begin{prop}\label{p:quot-T-gerbe}
The stack $\left.\frac{\HiggsSt_{\widetilde{G}_\tau}(C)}{\Bun_T(C)}\right|_{\Hitch_{\mf{g}}(C)\setminus\Delta}$ is a $T$-gerbe over $\Higgs_{G_{\ad}}(C)|_{\Hitch_{\mf{g}}(C)\setminus\Delta}$.
\end{prop}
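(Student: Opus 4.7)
The plan is to establish the gerbe structure in two stages: first to descend the $T$-gerbe structure already present on $\HiggsSt_{\widetilde{G}_\tau}(C)$ through the quotient by $\Bun_T(C)$, and then to identify the resulting coarse base with $\Higgs_{G_{\ad}}(C)$ over the good locus $\textbf{H}^\circ := \Hitch_{\mf{g}}(C)\setminus\Delta$ by combining Proposition \ref{prop:Higgs-ad-quotient} on neutral components with a $\pi_0$-computation.

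For the first stage, I would invoke the identification $T = Z(\widetilde{G}_\tau)$ from \eqref{eq:centre-G-tau}, which together with the cited Lemma 4.2.ii of Donagi--Pantev makes the canonical map $\HiggsSt_{\widetilde{G}_\tau}(C)|_{\textbf{H}^\circ} \to \Higgs_{\widetilde{G}_\tau}(C)|_{\textbf{H}^\circ}$ into a $T$-gerbe (locally trivial over $\textbf{H}^\circ$). Since $T$ is central in $\widetilde{G}_\tau$, the translation action of $\Bun_T(C)$ on $\HiggsSt_{\widetilde{G}_\tau}(C)$ commutes with the inertial $T$-action defining the gerbe, so after quotienting the gerbe structure persists: $\HiggsSt_{\widetilde{G}_\tau}(C)/\Bun_T(C)$ is a $T$-gerbe over $\Higgs_{\widetilde{G}_\tau}(C)/\Bun_T(C)$.

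For the second stage, the short exact sequence $1\to T\to\widetilde{G}_\tau \to G_{\ad}\to 1$ induces a $\Bun_T(C)$-invariant morphism $\Higgs_{\widetilde{G}_\tau}(C)\to\Higgs_{G_{\ad}}(C)$, and I would show the descended map $\Higgs_{\widetilde{G}_\tau}(C)/\Bun_T(C)\to\Higgs_{G_{\ad}}(C)$ is an isomorphism over $\textbf{H}^\circ$ by matching neutral components and $\pi_0$ separately. On neutral components, this is precisely the identification $\Higgs^0_{\widetilde{G}_\tau}(C)/\Bun^0_T(C)\cong\Higgs^0_{G_{\ad}}(C)$ of Proposition \ref{prop:Higgs-ad-quotient}. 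On $\pi_0$, the projection $\pd$ gives $\pi_0(\Higgs_{\widetilde{G}_\tau}(C)) = X_\bullet(T/Z(\widetilde{G}))$ with $\pi_0(\Bun_T(C)) = X_\bullet(T)$ acting via the natural inclusion, and the Kummer-type computation \eqref{eq:ZG-components} from the proof of Proposition \ref{prop:M-local} gives $X_\bullet(T/Z(\widetilde{G}))/X_\bullet(T) \cong Z(\widetilde{G}) = \pi_1(G_{\ad}) = \pi_0(\Higgs_{G_{\ad}}(C))$. Locally on $\textbf{H}^\circ$ the moduli spaces decompose as products of their neutral component with $\pi_0$ (as in \eqref{eq:Higgs-G-tau-local-2}), so these two matches assemble into the desired local isomorphism $\HiggsSt_{\widetilde{G}_\tau}(C)/\Bun_T(C) \simeq \Higgs_{G_{\ad}}(C) \times BT$.

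The main obstacle is promoting the various local identifications to a global statement over $\textbf{H}^\circ$, rather than just arguing fibrewise or on a formal neighbourhood. To handle this I would follow the template used at the end of the proof of Proposition \ref{prop:Higgs-ad-quotient}: pull back the exact sequence \eqref{eq:J-tau-SES} along the universal evaluation $\textbf{H}^\circ\times C\to \tot(\mf{c}_{K_C})$, take the derived pushforward to $\textbf{H}^\circ$, and exploit that $H^1(C;Z(\widetilde{G}))\hookrightarrow\Bun_T(C)$ is independent of the Hitchin base. Combined with the $T$-gerbe conclusion of Stage 1, this yields the proposition.
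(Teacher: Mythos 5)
Your proposal is correct in substance but follows a genuinely different route from the paper. The paper's proof works with the exact sequence of sheaves of regular centralisers $1\to T(\mc{O}_C)\to J_{\widetilde{G}_\tau}\to J_{G_{\ad}}\to 1$ induced by $1\to T\to\widetilde{G}_\tau\to G_{\ad}\to 1$ (equation \eqref{eq:JT-Jtau-Jad-SES}); taking cohomology and using the vanishing $H^2(C;T(\mc{O}_C))=0$ already established in Proposition \ref{prop:M-local} gives in one step the short exact sequence of coarse moduli spaces $0\to\Bun_T(C)\to\Higgs_{\widetilde{G}_\tau}(C)\to\Higgs_{G_{\ad}}(C)\to 0$ of \eqref{eq:BunT-tau-ad-SES} -- all connected components at once, with the maps manifestly the natural ones -- and then concludes via the local product description $\HiggsSt_{\widetilde{G}_\tau}(C)\simeq\Higgs_{\widetilde{G}_\tau}(C)\times BT$ and the rigidification map. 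You instead never use \eqref{eq:JT-Jtau-Jad-SES}: you descend the inertial $T$ through the free $\Bun_T(C)$-action and then reassemble the coarse isomorphism from Proposition \ref{prop:Higgs-ad-quotient} (which rests on the other sequence \eqref{eq:J-tau-SES}) on neutral components together with the Kummer computation \eqref{eq:ZG-components} on $\pi_0$. What the paper's route buys is that exactness of one long exact sequence simultaneously delivers surjectivity, the identification of fibres with $\Bun_T(C)$-orbits, and the correct behaviour on component groups; your route reuses already-proven statements but therefore owes two compatibility checks that you currently gloss: that the natural map $\Higgs^0_{\widetilde{G}_\tau}(C)\to\Higgs^0_{G_{\ad}}(C)$ induced by $\widetilde{G}_\tau\to G_{\ad}$ really is the isomorphism of Proposition \ref{prop:Higgs-ad-quotient} after quotienting by $\Bun^0_T(C)$ (a diagram chase comparing the centraliser sequences for $\widetilde{G}\times T\to\widetilde{G}_\tau\to G_{\ad}$ with $\widetilde{G}\to G_{\ad}$), and that on component groups this map is the canonical reduction $X_\bullet(T/Z(\widetilde{G}))\to X_\bullet(T/Z(\widetilde{G}))/X_\bullet(T)\cong Z(\widetilde{G})$, i.e.\ compatibility of the degree of $\pd_\ast E$ with the Stiefel--Whitney class of the induced adjoint bundle, as in \eqref{eq:bullet-to-ad-local}. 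Both checks are true and at the paper's level of rigour, so your argument goes through; your final globalisation step should also note that the component groups and the splitting $\iota_x$ are independent of the point of the Hitchin base, so that the fibrewise matches patch over $\Hitch_{\mf{g}}(C)\setminus\Delta$.
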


\begin{proof}
The exact sequence of groups
\begin{align}\label{eq:T-tau-ad-SES}
	1\to T \to \widetilde{G}_\tau \to G_{\ad} \to 1
\end{align}
yields the short exact sequence of sheaves of regular centralisers
\begin{align}\label{eq:JT-Jtau-Jad-SES}
	1\to T(\mc{O}_C) \to J_{\widetilde{G}_\tau}\to J_{G_{\ad}}\to 1 .
\end{align}
Global sections of \eqref{eq:JT-Jtau-Jad-SES} remain exact, so starting at $H^1$ the associated long exact sequence of cohomology gives
\begin{align}\label{eq:JT-Jtau-Jad-LES}
	0\to H^1(C;T(\mc{O}_C))\to H^1(C;J_{\widetilde{G}_\tau})\to H^1(C;J_{G_{\ad}})\to H^2(C;T(\mc{O}_C)) .
\end{align}
We have already seen that $H^2(C;T(\mc{O}_C))=0$ during the course of the proof of Proposition \ref{prop:M-local}, and so this becomes the short exact sequence of coarse moduli spaces
\begin{align}\label{eq:BunT-tau-ad-SES}
	0\to \Bun_T(C) \to \Higgs_{\widetilde{G}_\tau}(C)\to\Higgs_{G_{\ad}}(C)\to 0 .
\end{align}
Since $\HiggsSt_{\widetilde{G}_\tau}(C)$ is locally isomorphic to $\Higgs_{\widetilde{G}_\tau}(C)\times BT$ and admits a rigidification map to $\Higgs_{\widetilde{G}_\tau}(C)$ over all of $\Hitch_{\mf{g}}(C)\setminus\Delta$ that sends the $\Bun_T(C)$ of \eqref{diag:coarse-moduli-dual} to that of \eqref{eq:BunT-tau-ad-SES}, the result follows.
\end{proof}

Combining this result with the short exact sequences \eqref{diag:coarse-moduli-dual} gives the following corollary:

\begin{cor}\label{cor:dual-T-gerbe}
\begin{enumerate}[(a)]
	\item[]
	\item $\HiggsSt_{\widetilde{G}}^\bullet(C)^D|_{\Hitch_{\mf{g}}(C)\setminus\Delta}$ is an $^L(T/Z(\widetilde{G}))$-gerbe over $\Higgs_{^LG_{\ad}}(C)|_{\Hitch_{\mf{g}}(C)\setminus\Delta}$.
	\item $\Higgs_{\widetilde{G}}^\bullet(C)^D|_{\Hitch_{\mf{g}}(C)\setminus\Delta}$ is an $^L(T/Z(\widetilde{G}))$-gerbe over $\Higgs_{^LG_{\ad}}^0(C)|_{\Hitch_{\mf{g}}(C)\setminus\Delta}$.
\end{enumerate}
\end{cor}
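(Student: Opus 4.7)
The plan is to combine the quotient descriptions supplied by the short exact sequences \eqref{diag:coarse-moduli-dual} with the Langlands-dual version of Proposition \ref{p:quot-T-gerbe}. The top sequence in \eqref{diag:coarse-moduli-dual} identifies
\begin{align*}
\HiggsSt^\bullet_{\widetilde{G}}(C)^D \cong \HiggsSt_{(\widetilde{^LG})_{^L\tau}}(C)/\Bun_{^L(T/Z(\widetilde{G}))}(C),
\end{align*}
while the bottom gives
\begin{align*}
\Higgs^\bullet_{\widetilde{G}}(C)^D \cong \HiggsSt^0_{(\widetilde{^LG})_{^L\tau}}(C)/\Bun^0_{^L(T/Z(\widetilde{G}))}(C).
\end{align*}
So each statement will reduce to recognising the right-hand side as a gerbe over the asserted base.

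For part (a), I would first invoke the structural results of Appendix \ref{s:structure-results}, which assert that the central torus of $(\widetilde{^LG})_{^L\tau}$ is $^L(T/Z(\widetilde{G}))$ and that its adjoint quotient is $^LG_{\ad}$ (exactly parallel to \eqref{diag:G-tau-proj} and \eqref{eq:T-tau-ad-SES}). Proposition \ref{p:quot-T-gerbe} applied verbatim to $\widetilde{^LG}$ in place of $\widetilde{G}$, with $T$ replaced by $^L(T/Z(\widetilde{G}))$, then yields that $\HiggsSt_{(\widetilde{^LG})_{^L\tau}}(C)/\Bun_{^L(T/Z(\widetilde{G}))}(C)$ is a $^L(T/Z(\widetilde{G}))$-gerbe over $\Higgs_{^LG_{\ad}}(C)$ away from the discriminant, establishing (a).

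For part (b), the same argument goes through after restricting to neutral components everywhere. Since the central torus $^L(T/Z(\widetilde{G}))$ is connected, $\HiggsSt^0_{(\widetilde{^LG})_{^L\tau}}(C)$ remains an $^L(T/Z(\widetilde{G}))$-gerbe over its coarse moduli space $\Higgs^0_{(\widetilde{^LG})_{^L\tau}}(C)$. Combining this with the Langlands-dual analogue of Proposition \ref{prop:Higgs-ad-quotient} -- namely $\Higgs^0_{(\widetilde{^LG})_{^L\tau}}(C)/\Bun^0_{^L(T/Z(\widetilde{G}))}(C) \cong \Higgs^0_{^LG_{\ad}}(C)$ -- delivers the required gerbe structure. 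The main technical input, and essentially the only nontrivial step, is the identification of the central torus and adjoint quotient of $(\widetilde{^LG})_{^L\tau}$ so that Proposition \ref{p:quot-T-gerbe} really does apply on the Langlands-dual side; this is exactly what Appendix \ref{s:structure-results} is designed to supply, so no further obstacle arises.
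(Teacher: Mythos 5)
Your proposal is correct and follows exactly the route the paper intends: the corollary is stated as the combination of Proposition \ref{p:quot-T-gerbe} (applied verbatim on the Langlands-dual side, using the identification of $(\widetilde{^LG})_{^L\tau}$, its central torus $^L(T/Z(\widetilde{G}))$ and adjoint quotient $^LG_{\ad}$ from Appendix \ref{s:structure-results}) with the quotient presentations of $\HiggsSt^\bullet_{\widetilde{G}}(C)^D$ and $\Higgs^\bullet_{\widetilde{G}}(C)^D$ coming from the exact sequences \eqref{diag:coarse-moduli-dual}, with the neutral-component case resting on the dual analogue of Proposition \ref{prop:Higgs-ad-quotient}. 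Your write-up simply makes explicit the combination the paper leaves implicit.
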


\begin{notation}\label{not:Q-stack}
To declutter notation, from now on I will denote $\frac{\HiggsSt_{\widetilde{G}_\tau}(C)}{\Bun_T(C)}$ by $\mc{Q}_{\widetilde{G}}^\bullet(C)$.
\end{notation}


\subsection{Dualising $\mc{M}_{\mf{g}}(C)$}\label{ss:M-dual}

As per Example \ref{eg:dmodn}, the moduli stack $\mc{M}_{\mf{g}}(C)$ may be interpreted as the ``moduli of $\widetilde{G}$-Higgs bundles on $C$ of arbitrary degree, modulo uninteresting isomorphisms''. The main results of this paper -- namely the generalisation of \cite{HauTha03,DonPan12} to incorporate ``non-zero degrees'' for all semisimple groups (Theorems \ref{thm:dual-of-M} and \ref{thm:dual-of-intquot}) and the existence of self-dual moduli stacks associated to simply-laced Lie algebras (Corollary \ref{cor:self-dual-stacks}) -- boil down to the fact that the moduli stack $\mc{M}_{\mf{g}}(C)$ behaves nicely under shifted Cartier duality.

There is an action of $H^1(C;Z(\widetilde{G}))$ on $\mc{M}_{\mf{g}}(C)$, induced by the $\BunSt_T(C)$ action on $\HiggsSt_{\widetilde{G}_\tau}(C)$ and the trivialisation of the gerbe $\BunSt_T(C)$ over $\Bun_T(C)$ given by the choice of point $x\in C$.\footnote{\label{fn:pi0-dual-rigid}The existence of such a trivialisation  may be easier to see from the Cartier dual perspective, where it becomes the splitting of the map $\BunSt_{^LT}(C)\to \pi_0(\BunSt_{^LT}(C))=X_\bullet(^LT)$.} This action is free away from the discriminant locus of $\Hitch_{\mf{g}}(C)$, a fact which may be checked locally.

\begin{thm}\label{thm:dual-of-M}
There is an isomorphism of commutative group stacks
\begin{align}\label{eq:M-mod-H1-dual}
	\left(\left.\frac{\mc{M}_{\mf{g}}(C)}{H^1(C;Z(\widetilde{G}))}\right|_{\Hitch_{\mf{g}}(C)\setminus\Delta}\right)^D \cong \mc{M}_{^L\mf{g}}(C)|_{\Hitch_{^L\mf{g}}(C)\setminus\Delta} .
\end{align}
\end{thm}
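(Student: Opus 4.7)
The plan is to realise both sides of \eqref{eq:M-mod-H1-dual} as quotients of common auxiliary stacks constructed in Section \ref{ss:bullet-dual}, then apply exactness of shifted Cartier duality (Proposition \ref{p:autoequivalence}) to match them. I implicitly restrict to $\Hitch_{\mf{g}}(C)\setminus\Delta$ throughout as per Important Remark! \ref{rk:good-locus}.

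First, combining $\mc{M}_{\mf{g}}(C)=\HiggsSt^\bullet_{\widetilde G}(C)/X_\bullet(T)$ (Definition \ref{def:M-moduli}) with the $H^1(C;Z(\widetilde G))$-action realises
\[
\mc{M}_{\mf{g}}(C)/H^1(C;Z(\widetilde G)) \;\cong\; \HiggsSt^\bullet_{\widetilde G}(C)/K,
\]
where $K := X_\bullet(T)\oplus H^1(C;Z(\widetilde G))$ acts freely through $\Bun_T(C)$ (using $\iota_x$ for the first summand and the Kummer embedding of \eqref{eq:Kum-LES-2} for the second). Dualising the resulting short exact sequence $0\to K\to \HiggsSt^\bullet_{\widetilde G}(C) \to \mc{M}_{\mf{g}}(C)/H^1(C;Z(\widetilde G))\to 0$ yields
\[
0\to (\mc{M}_{\mf{g}}(C)/H^1(C;Z(\widetilde G)))^D \to \HiggsSt^\bullet_{\widetilde G}(C)^D \to K^D\to 0.
\]
By Example \ref{eg:dual-BK} and Poincar\'e duality for finite coefficients on $C$, the dual $K^D$ decomposes as $B({}^LT)\times BH^1(C;Z({}^L\widetilde G))$, using $Z(\widetilde G)^\vee \cong Z({}^L\widetilde G)$ from Remark \ref{rk:centre-of-group}.

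Next I identify the ambient stack $\HiggsSt^\bullet_{\widetilde G}(C)^D$ via the already-computed sequence in \eqref{diag:coarse-moduli-dual},
\[
0\to \Bun_{{}^L(T/Z(\widetilde G))}(C) \to \HiggsSt_{({}^L\widetilde G)_{{}^L\tau}}(C) \to \HiggsSt^\bullet_{\widetilde G}(C)^D \to 0,
\]
and compare it with the analogues of Proposition \ref{prop:measure-diff} and Definition \ref{def:M-moduli} applied to the Langlands dual side, which present $\mc{M}_{{}^L\mf{g}}(C)$ as a quotient of $\HiggsSt_{({}^L\widetilde G)_{{}^L\tau}}(C)$ by the extension obtained by splicing
\[
0\to \HiggsSt^\bullet_{{}^L\widetilde G}(C) \to \HiggsSt_{({}^L\widetilde G)_{{}^L\tau}}(C) \to \BunSt^0_{{}^LT/Z({}^L\widetilde G)}(C) \to 0
\]
with $0\to X_\bullet({}^LT) \to \HiggsSt^\bullet_{{}^L\widetilde G}(C) \to \mc{M}_{{}^L\mf{g}}(C)\to 0$. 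Matching these two descriptions is the main technical content: it amounts to recognising the Kummer-type exact sequence \eqref{eq:Kum-LES-2} for ${}^LT$, together with the relationship between ${}^L(T/Z(\widetilde G))$, ${}^LT$, and $Z({}^L\widetilde G)$ established in Appendix \ref{s:structure-results}, as the Cartier-dual of the corresponding relationship for $T$ on the original side.

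I would verify the resulting global isomorphism pointwise via Proposition \ref{prop:M-local}: locally $\mc{M}_{\mf{g}}(C)/H^1(C;Z(\widetilde G))\simeq \Higgs^0_{G_{\ad}}(C)\times Z(\widetilde G)\times BZ(\widetilde G)$ (using Theorem \ref{thm:PrymComp}), and applying Theorem \ref{thm:dual-prym} and Example \ref{eg:dual-BK} factor-by-factor produces $\Higgs_{{}^L\widetilde G}(C)\times BZ({}^L\widetilde G)\times Z({}^L\widetilde G)$, matching the local form of $\mc{M}_{{}^L\mf{g}}(C)$. The main obstacle is the global coherence of the identification: the local duality is essentially a formal reshuffle of lattice, torus, gerbe and discrete factors, but promoting it to an isomorphism of commutative group stacks over the entire very regular locus requires (as in \cite{DonPan12}) that the Donagi-Pantev autoduality of the Hitchin Pryms intertwines the $K$-action with its Cartier dual action on the Langlands dual side.
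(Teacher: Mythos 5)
Your architecture is essentially the paper's: the sequence you dualise, $0\to X_\bullet(T)\times H^1(C;Z(\widetilde{G}))\to\HiggsSt^\bullet_{\widetilde{G}}(C)\to\mc{M}_{\mf{g}}(C)/H^1(C;Z(\widetilde{G}))\to 0$, is exactly the left column of the paper's diagram \eqref{diag:Higgs-tau-diagram}, and the identification of the ambient dual via \eqref{diag:coarse-moduli-dual} is the same move the paper makes. But the step you label ``the main technical content'' and ``the main obstacle'' is precisely where the paper's proof lives, and you leave it unproven. Concretely: after dualising, you must show that the map $\HiggsSt^\bullet_{\widetilde{G}}(C)^D\to K^D$ in your sequence agrees with the corresponding structure map of the Langlands-dual-side presentation -- in the paper's formulation, that the map $\Bun_T(C)\to\HiggsSt_{\widetilde{G}_\tau}(C)$ induced by $T\subset J^{\widetilde{G}_\tau}$ is Cartier dual to $\pd_\ast$, i.e.\ $\pd_\ast^D=\zeta_\ast$ for $\zeta$ as in \eqref{eq:zeta-defn}. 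This is not a formal reshuffle: the paper establishes it by decomposing both maps over $\sigma\in\Hitch_{\mf{g}}(C)\setminus\Delta$ into their $\pi_0$, gerbe ($BT$ versus character) and abelian-variety components, and on the abelian-variety factor it needs the dual-isogeny diagrams \eqref{diag:factor-pd-ast}--\eqref{diag:factor-zeta-ast} together with \cite[Theorem A]{DonPan12} to see that the two maps are exchanged under duality. Your proposed substitute -- a pointwise check via Proposition \ref{prop:M-local}, Theorem \ref{thm:PrymComp} and Theorem \ref{thm:dual-prym} -- only shows the two sides have abstractly isomorphic fibres; it neither produces a morphism of commutative group stacks over the very regular locus nor shows that the particular arrows in your spliced exact sequences match, which is exactly the ``intertwining of the $K$-action with its Cartier dual'' that you invoke but do not verify.

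A smaller inaccuracy in the same vein: on the dual side the construction uses the torus ${}^L(T/Z(\widetilde{G}))$, into which $Z(\widetilde{^LG})$ embeds via ${}^L\tau$ (Proposition \ref{prop:Langlands-dual-SES}), so the lattice acting on $\HiggsSt^\bullet_{\widetilde{^LG}}(C)$ is $X_\bullet({}^L(T/Z(\widetilde{G})))$ and the cokernel torus of the dual-side analogue of $\pd$ is ${}^LT$ -- not $X_\bullet({}^LT)$ and ${}^LT/Z({}^L\widetilde{G})$ as written. Keeping these identifications straight (this is what Appendix \ref{s:structure-results} is for) is part of what makes the comparison of your two presentations of $\HiggsSt^\bullet_{\widetilde{G}}(C)^D$ come out correctly, so the slip is worth fixing before attempting the missing compatibility argument.
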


\begin{remark}\label{rk:identify-hitchin-bases}
In the statement of Theorem \ref{thm:dual-of-M} and in what follows, I will implicitly identify the Hitchin bases for dual Lie algebras via the duality isomorphism of \cite[Theorem A]{DonPan12}.
\end{remark}

\begin{proof}
Consider the commutative diagram with exact rows and columns, where the first row of vertical arrows is induced by the trivialisation of $\BunSt_T(C)$:\footnote{In particular, the vertical map out of $\Bun_T(C)$ factors through $\BunSt_T(C)$.}
\begin{equation}\label{diag:Higgs-tau-diagram}
\begin{tikzcd}
		& 0\ar[d]
			& 0\ar[d]
				& 0\ar[d]
					&	\\
	0\ar[r]
		& X_\bullet(T)\times H^1(C;Z(\widetilde{G}))\ar[r]\ar[d]
			& \Bun_T(C)\ar[r]\ar[d]
				& \Bun_{T/Z(\widetilde{G})}^0(C)\ar[r]\ar[d]
					& 0 \\
	0\ar[r]
		& \HiggsSt_{\widetilde{G}}^\bullet(C)\ar[r]\ar[d]
			& \HiggsSt_{\widetilde{G}_\tau}(C)\ar[r]\ar[d]
				& \BunSt_{T/Z(\widetilde{G})}^0(C)\ar[r]\ar[d]
					& 0 \\
	0\ar[r]
		& \frac{\mc{M}_{\mf{g}}(C)}{H^1(C;Z(\widetilde{G}))}\ar[r]\ar[d]
			& \mc{Q}_{\widetilde{G}}^\bullet(C)\ar[r]\ar[d]
				& B(T/Z(\widetilde{G}))\ar[r]\ar[d]
					& 0 \\
		& 0 & 0 & 0 &
\end{tikzcd}
\end{equation}
Implicit in the above diagram is the claim that the map $\Bun_T(C)\to\HiggsSt_{\widetilde{G}_\tau}(C)$ induced by the inclusion $T\subset J^{\widetilde{G}_\tau}$ can be identified with the map out of $\Bun$ of \eqref{diag:coarse-moduli-dual}. This claim is non-obvious, and can be established by comparing the maps locally as follows.

Using the notation of \eqref{diag:coarse-moduli} and \eqref{diag:coarse-moduli-dual}, consider the maps $\pd$ and $\zeta$ defined by
\begin{equation}
\begin{tikzcd}
	1\ar[r] & \widetilde{G}\ar[r] & \widetilde{G}_\tau\ar[r,"\pd"] & T/Z(\widetilde{G}) \ar[r] & 1
\end{tikzcd} \label{eq:del-defn}
\end{equation}
\begin{equation}
\begin{tikzcd}
1 \ar[r] & {^L}(T/Z(\widetilde{G}))\ar[r,"\zeta"] & \left(\widetilde{^LG}\right)_{^L\tau}\ar[r] & {^LG}_{\ad}\ar[r] & 1
\end{tikzcd}\label{eq:zeta-defn}
\end{equation}
These induce maps
\[\begin{tikzcd}[row sep = tiny]
	\HiggsSt_{\widetilde{G}_\tau}(C)\ar[r,"\pd_\ast"]
		& \BunSt_{T/Z(\widetilde{G})}(C) \\
	\BunSt_{^L(T/Z(\widetilde{G}))}(C)\ar[r,"\zeta_\ast"]
		& \HiggsSt_{\left(\widetilde{^LG}\right)_{^L\tau}}(C)
\end{tikzcd}\]
By definition, $\HiggsSt_{\widetilde{G}_\tau}(C)\to\BunSt^0_{T/Z(\widetilde{G})}(C)$ factors through $\pd_\ast$, and $\Bun_{^L(T/Z(\widetilde{G}))}(C)\to\HiggsSt_{\left(\widetilde{^LG}\right)_{^L\tau}}(C)$ factors through $\zeta_\ast$. Moreover, the rigidification map is defined to be Cartier dual to splitting off $\pi_0$ (Footnote \ref{fn:pi0-dual-rigid}). Thus it suffices to prove that $\pd_\ast^D = \zeta_\ast$.

Over a point $\sigma\in\Hitch_{\mf{g}}(C)\setminus\Delta$,  the maps decompose as
\begin{align}
	\pd_\ast&:
		\underbrace{\Higgs^0_{\widetilde{G}_\tau}(C)_\sigma\times X_\bullet(T/Z(\widetilde{G}))\times BT}_{\simeq\HiggsSt_{\widetilde{G}_\tau}(C)_\sigma}
			\to\underbrace{\Bun_{T/Z(\widetilde{G})}^0(C)\times X_\bullet(T/Z(\widetilde{G}))\times B(T/Z(\widetilde{G}))}_{\simeq\BunSt_{T/Z(\widetilde{G})}(C)} \label{eq:pd-ast-decomp} \\
	\zeta_\ast&:
		\underbrace{\Bun^0_{^L(T/Z(\widetilde{G}))}(C)\times X^\bullet(T/Z(\widetilde{G}))\times B({^L(T/Z(\widetilde{G}))})}_{\simeq \BunSt_{^L(T/Z(\widetilde{G}))}(C)}
			\to
			 \underbrace{\Higgs^0_{\left(\widetilde{^LG}\right)_{^L\tau}}(C)_\sigma\times X^\bullet(T)\times B({^L(T/Z(\widetilde{G}))})}_{\HiggsSt_{\left(\widetilde{^LG}\right)_{^L\tau}}(C)_\sigma} \label{eq:zeta-ast-decomp}
\end{align}
with no ``mixing'' between distinct factors. We can deal with the three factors separately:
\begin{itemize}
	\item {\bf $\pd_\ast$ on $\pi_0$:} $\pd_\ast$ restricts to the identity map on $\pi_0$, which dualises correctly to the identity map on $B({^L(T/Z(\widetilde{G}))})$.

	\item {\bf $\pd_\ast$ on $BT$:} $\pd_\ast$ sends a $T$-bundle $U\to S$ to the $T/Z(\widetilde{G})$-bundle $U/Z(\widetilde{G})\to S$; i.e.\ it is $Bq:BT\to B(T/Z(\widetilde{G}))$ where $q:T\to T/Z(\widetilde{G})$ is the quotient map. This dualises to the pullback of characters $q^\ast: X^\bullet(T/Z(\widetilde{G}))\to X^\bullet(T)$.

	On the other hand, the map $\HiggsSt_{\left(\widetilde{^LG}\right)_{^L\tau}}(C)_\sigma \to \BunSt_{^LT}(C)$ is induced by the map
	\[	1\to \widetilde{^LG}
			\to \left(\widetilde{^LG}\right)_{^L\tau}
				= \frac{\widetilde{^LG}\times{^L(T/Z(\widetilde{G}))}}{Z(\widetilde{^LG})}
				\to {^LT} \to 1	\]
	and so the composition with $\zeta$ is the quotient map $^L(T/Z(\widetilde{G}))\to {^LT}$. This quotient map is equal to $q^\ast\tens\mb{C}^\times$ (see \eqref{eq:Ltau-SES})). The induced map on connected components is given by taking cocharacters, which yields $q^\ast = \zeta_\ast$.

	\item {\bf $\pd_\ast$ on $\Higgs^0_{\widetilde{G}_\tau}(C)_\sigma$:} Using the cohomological description of the Hitchin fibres and \eqref{eq:Higgs-tau-0-quot}, we have that the restriction of $\pd_\ast$ and $\zeta_\ast$ to the abelian variety factors fit into factorisation diagrams
	\begin{equation}\label{diag:factor-pd-ast}
	\begin{tikzcd}
		0\to
			 H^1(C;Z(\widetilde{G})) \ar[r]
				& \frac{H^1(C;J^{\widetilde{G}}_\sigma)\times H^1(C;T)_0}{H^1(C;Z(\widetilde{G}))} \ar[d,"\pd_\ast"]\ar[r,"\text{isog}_1"]
					& H^1(C;J^{G_{\ad}}_\sigma)\times H^1(C;T/Z(\widetilde{G}))_0 \ar[dl,"\widetilde{\pd_\ast}"]
						\to 0 \\
				& H^1(C;T/Z(\widetilde{G}))_0
	\end{tikzcd}
	\end{equation}
	\begin{equation}\label{diag:factor-zeta-ast}
	\begin{tikzcd}
		0\to
			 H^1(C;Z(\widetilde{^LG})) \ar[r]
				& H^1(C;J^{\widetilde{^LG}}_\sigma)\times H^1(C;{^L(T/Z(\widetilde{G}))})_0\ar[r,"\text{isog}_2"]
					& \frac{H^1(C;J^{\widetilde{^LG}}_\sigma)\times H^1(C;{^L(T/Z(\widetilde{G}))})_0}{H^1(C;Z(\widetilde{^LG}))}
						\to 0 \\
				& H^1(C;{^L(T/Z(\widetilde{G}))})_0 \ar[u,"\widetilde{\zeta_\ast}"]\ar[ur,"\zeta_\ast"]
	\end{tikzcd}
	\end{equation}
	where the subscript $0$ indicates the connected component of the identity. The rows are dual isogenies of abelian varieties, and the the inclusion $\widetilde{\zeta_\ast}$ is dual to the projection $\widetilde{\pd_\ast}$. Hence
		\[	\zeta_\ast
			=\text{isog}_2\circ\widetilde{\zeta_\ast}
			=\text{isog}_1^D \circ \widetilde{\pd_\ast}^D
			=(\widetilde{\pd_\ast}\circ\text{isog}_1)^D
			=\pd_\ast^D,	\]
	which completes the proof of the claim.
\end{itemize}

With that dealt with, we now dualise the bottom row of diagram \eqref{diag:Higgs-tau-diagram} to obtain
\begin{equation}\label{diag:Higgs-tau-dual-diagram}
\begin{tikzcd}
	0\ar[r]
		& X_\bullet({^L(T/Z(\widetilde{G}))})\ar[r]
			& \mc{Q}_{\widetilde{G}}^\bullet(C)^D\ar[r]
				& \left(\frac{\mc{M}_{\mf{g}}(C)}{H^1(C;Z(\widetilde{G}))}\right)^D\ar[r]
					& 0 .
\end{tikzcd}
\end{equation}
Then by the definition of $\mc{Q}_{\widetilde{G}}^\bullet(C)$ and Proposition \ref{p:quot-T-gerbe}, $\mc{Q}_{\widetilde{G}}^\bullet(C)^D \cong \HiggsSt_{\widetilde{^LG}}^\bullet(C)$, so we conclude that
\begin{align}\label{eq:M-mod-H1-dual-redux}
	\left(\frac{\mc{M}_{\mf{g}}(C)}{H^1(C;Z(\widetilde{G}))}\right)^D \cong \frac{\HiggsSt_{\widetilde{^LG}}^\bullet(C)}{X_\bullet({^L(T/Z(\widetilde{G}))})} =: \mc{M}_{^L\mf{g}}(C) .
\end{align}
\end{proof}

Now, take a subgroup $\Gamma\subset H^1(C;Z(\widetilde{G}))$ and consider the ``intermediate quotient'' stack $\frac{\mc{M}_{\mf{g}}(C)}{\Gamma}$. $H^1(C;Z(\widetilde{G}))$ is equipped with a non-degenerate skew pairing, induced by the cup product on cohomology and a natural nondegenerate symmetric pairing on $Z(\widetilde{G})$ (see, e.g., \cite[\S 7.1]{KapWit07}). Denote by $\ann(\Gamma)$ the annihilator of $\Gamma$ with respect to this pairing.

\begin{thm}\label{thm:dual-of-intquot}
There is an isomorphism of commutative group stacks
\begin{align}\label{eq:dual-of-intquot}
	\left(\left.\frac{\mc{M}_{\mf{g}}(C)}{\Gamma}\right|_{\Hitch_{\mf{g}}(C)\setminus\Delta}\right)^D
		\cong\left.\frac{\mc{M}_{^L\mf{g}}(C)}{\ann(\Gamma)}\right|_{\Hitch_{^L\mf{g}}(C)\setminus\Delta} .
\end{align}
\end{thm}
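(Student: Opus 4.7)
The strategy is to deduce Theorem \ref{thm:dual-of-intquot} from Theorem \ref{thm:dual-of-M} by dualizing an intermediate short exact sequence. Working throughout on the regular locus (where all the relevant $H^1$-actions are free by hypothesis), the chain of subgroups $\Gamma \subseteq H^1(C;Z(\widetilde{G}))$ acting on $\mc{M}_{\mf{g}}(C)$ yields the short exact sequence of commutative group stacks
\begin{equation*}
0 \to H^1(C;Z(\widetilde{G}))/\Gamma \to \mc{M}_{\mf{g}}(C)/\Gamma \to \mc{M}_{\mf{g}}(C)/H^1(C;Z(\widetilde{G})) \to 0,
\end{equation*}
arising from the evident factorization of the quotient map by $H^1(C;Z(\widetilde{G}))$ through the intermediate quotient by $\Gamma$.

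Applying shifted Cartier duality, which is exact by Proposition \ref{p:autoequivalence}, gives
\begin{equation*}
0 \to \bigl(\mc{M}_{\mf{g}}(C)/H^1(C;Z(\widetilde{G}))\bigr)^D \to (\mc{M}_{\mf{g}}(C)/\Gamma)^D \to \bigl(H^1(C;Z(\widetilde{G}))/\Gamma\bigr)^D \to 0.
\end{equation*}
By Theorem \ref{thm:dual-of-M} the left-hand term is $\mc{M}_{^L\mf{g}}(C)$. For the right-hand term, Example \ref{eg:dual-BK} applied to the finite discrete group $H^1(C;Z(\widetilde{G}))/\Gamma$ yields $\bigl(H^1(C;Z(\widetilde{G}))/\Gamma\bigr)^D \cong B\bigl((H^1(C;Z(\widetilde{G}))/\Gamma)^\vee\bigr)$; the nondegenerate skew pairing on $H^1(C;Z(\widetilde{G}))$ identifies this Pontryagin dual with $\ann(\Gamma)$, which sits canonically inside $H^1(C;Z(\widetilde{^LG}))$ via the Langlands identification $Z(\widetilde{^LG}) \cong Z(\widetilde{G})^\vee$. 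Thus $(\mc{M}_{\mf{g}}(C)/\Gamma)^D$ is an extension of $B\ann(\Gamma)$ by $\mc{M}_{^L\mf{g}}(C)$.

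Performing the symmetric construction on the Langlands dual side, starting from the chain $\ann(\Gamma) \subseteq H^1(C;Z(\widetilde{^LG}))$ acting on $\mc{M}_{^L\mf{g}}(C)$, one likewise exhibits $\mc{M}_{^L\mf{g}}(C)/\ann(\Gamma)$ as an extension of $B\ann(\Gamma)$ by $\mc{M}_{^L\mf{g}}(C)$. To conclude, the two extensions must be identified. My plan is to assemble a $3\times 3$ diagram of short exact sequences encoding the naturality of the intermediate-quotient construction in the pair $(\Gamma \subseteq H^1(C;Z(\widetilde{G})))$, dualize it in its entirety, and invoke Theorem \ref{thm:dual-of-M} together with the five-lemma to produce the desired isomorphism $(\mc{M}_{\mf{g}}(C)/\Gamma)^D \cong \mc{M}_{^L\mf{g}}(C)/\ann(\Gamma)$. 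The main obstacle will be this last step: verifying that both extensions are classified by the canonical inclusion $\ann(\Gamma) \hookrightarrow \mc{M}_{^L\mf{g}}(C)$. This requires carefully tracking how shifted Cartier duality interacts with the self-pairing on $H^1(C;Z(\widetilde{G}))$ and with the Langlands identification of centres along each naturality square.
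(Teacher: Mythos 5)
There is a genuine gap, and it sits exactly where you flag it. Your opening move is the same as the paper's: form the sequence $0\to H^1(C;Z(\widetilde{G}))/\Gamma\to\mc{M}_{\mf{g}}(C)/\Gamma\xrightarrow{\ \gamma\ }\mc{M}_{\mf{g}}(C)/H^1(C;Z(\widetilde{G}))\to 0$ and dualise, using Theorem \ref{thm:dual-of-M} and the skew pairing to bring in $\ann(\Gamma)$. But your dualised sequence is written with the wrong shape, and that misstep forces you into the unfinished endgame. The arrow $\gamma^D\colon\bigl(\mc{M}_{\mf{g}}(C)/H^1\bigr)^D\to\bigl(\mc{M}_{\mf{g}}(C)/\Gamma\bigr)^D$ is \emph{not} a monomorphism exhibiting the left-hand term as a subobject: locally over $\Hitch_{\mf{g}}(C)\setminus\Delta$ (Proposition \ref{prop:M-local}) $\gamma$ is an isogeny of abelian schemes times identities on $Z(\widetilde{G})$ and $BZ(\widetilde{G})$, so $\gamma^D$ is the \emph{dual} isogeny times identities, hence \emph{surjective} with finite kernel $\bigl(H^1(C;Z(\widetilde{G}))/\Gamma\bigr)^\vee$. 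This is the paper's sequence \eqref{eq:intquot-dual-SES}, obtained from the local computations \eqref{eq:gamma-local}--\eqref{eq:gamma-dual-local}, and it finishes the proof in one step: $(\mc{M}_{\mf{g}}(C)/\Gamma)^D$ is the quotient of $(\mc{M}_{\mf{g}}(C)/H^1)^D\cong\mc{M}_{^L\mf{g}}(C)$ by $\bigl(H^1/\Gamma\bigr)^\vee\cong\ann(\Gamma)$, which is the statement. Your version, which presents $(\mc{M}_{\mf{g}}(C)/\Gamma)^D$ instead as an extension with quotient $B\ann(\Gamma)$, carries the same information only if you also identify the classifying data of that extension, i.e.\ show that the induced map $\ann(\Gamma)\to\mc{M}_{^L\mf{g}}(C)$ is the canonical translation action coming from $\ann(\Gamma)\subset H^1(C;Z(\widetilde{^LG}))$. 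You correctly identify this as the main obstacle, but the proposed ``$3\times 3$ diagram plus five-lemma'' is only a plan: knowing that two stacks are both extensions of $B\ann(\Gamma)$ by $\mc{M}_{^L\mf{g}}(C)$ does not identify them, and a five-lemma argument for commutative group stacks would in any case need the very comparison of boundary maps you have not supplied. So as written the proof is incomplete.

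The fix is to replace the extension-class comparison by the kernel computation: verify (locally, using the decomposition of Proposition \ref{prop:M-local} and the fact that duality acts factorwise there, with the abelian-scheme factor handled by the standard duality of isogenies) that $\gamma^D$ is surjective with kernel $(H^1(C;Z(\widetilde{G}))/\Gamma)^\vee$, and that under the identification of Theorem \ref{thm:dual-of-M} this kernel is the image of $\ann(\Gamma)$ acting on $\mc{M}_{^L\mf{g}}(C)$. Note also that, with the paper's Definition \ref{def:exact} in mind, your claim that Proposition \ref{p:autoequivalence} yields your displayed short exact sequence is imprecise: duality swaps $\pi_0$ and $\pi_1$, so the naive term-by-term dual of a short exact sequence with finite discrete kernel has a non-injective first arrow on coarse moduli; it is precisely this failure (the dual isogeny's kernel) that one should exploit rather than suppress.
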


\begin{proof}
Consider the quotient map
\begin{align}\label{eq:gamma-quot}
	\gamma:\frac{\mc{M}_{\mf{g}}(C)}{\Gamma} \to \frac{\mc{M}_{\mf{g}}(C)}{H^1(C;Z(\widetilde{G}))}
\end{align}
with kernel $H^1(C;Z(\widetilde{G}))/\Gamma$. Locally the map \eqref{eq:gamma-quot} is
\begin{equation}\label{eq:gamma-local}
\begin{tikzcd}[column sep = tiny]
\frac{\mc{M}_{\mf{g}}(C)}{\Gamma}\ar[d,"\gamma"']	
	& \simeq &\frac{\Higgs_{\widetilde{G}}(C)}{\Gamma}\ar[d,"\text{isogeny}"']
		& \times & Z(\widetilde{G})\ar[d,"\id_{Z(\widetilde{G})}"]
		& \times & BZ(\widetilde{G})\ar[d,"\id_{BZ(\widetilde{G})}"] 	\\
\frac{\mc{M}_{\mf{g}}(C)}{H^1(C;Z(\widetilde{G}))}
	& \simeq &\frac{\Higgs_{\widetilde{G}}(C)}{H^1(C;Z(\widetilde{G}))}
		& \times & Z(\widetilde{G})
		& \times & BZ(\widetilde{G})
\end{tikzcd}
\end{equation}
Under Cartier duality $(-)^D$, the map $\gamma$ dualises locally to
\begin{equation}\label{eq:gamma-dual-local}
\begin{tikzcd}[column sep = tiny]
\left(\frac{\mc{M}_{\mf{g}}(C)}{H^1(C;Z(\widetilde{G}))}\right)^D\ar[d,"\gamma^D"']	
	& \simeq &\left(\frac{\Higgs_{\widetilde{G}}(C)}{H^1(C;Z(\widetilde{G}))}\right)^D\ar[d,"\text{dual isogeny}"']
		& \times & BZ(\widetilde{^LG})\ar[d,"\id_{BZ(\widetilde{^LG})}"]
		& \times & Z(\widetilde{^LG})\ar[d,"\id_{Z(\widetilde{^LG})}"] 	\\
\left(\frac{\mc{M}_{\mf{g}}(C)}{\Gamma}\right)^D
	& \simeq &\left(\frac{\Higgs_{\widetilde{G}}(C)}{\Gamma}\right)^D
		& \times & BZ(\widetilde{^LG})
		& \times & Z(\widetilde{^LG})
\end{tikzcd}
\end{equation}
The kernel of the dual isogeny is $(H^1(C;Z(\widetilde{G}))/\Gamma)^\vee$, so we have a short exact sequence
\begin{align}\label{eq:intquot-dual-SES}
	0\to (H^1(C;Z(\widetilde{G}))/\Gamma)^\vee
			\to\left(\frac{\mc{M}_{\mf{g}}(C)}{H^1(C;Z(\widetilde{G}))}\right)^D
			\to\left(\frac{\mc{M}_{\mf{g}}(C)}{\Gamma}\right)^D
			\to 0 .
\end{align}
The theorem now follows from the identification $\left(\frac{\mc{M}_{\mf{g}}(C)}{H^1(C;Z(\widetilde{G}))}\right)^D \cong \mc{M}_{^L\mf{g}}(C)$ of Theorem \ref{thm:dual-of-M}, and the identification $(H^1(C;Z(\widetilde{G}))/\Gamma)^\vee \cong \ann(\Gamma)$ induced by the non-degenerate skew-pairing.
\end{proof}

\begin{remark}\label{rk:syz-1}
By restricting to the semistable locus and letting $\Gamma$ be a subgroup induced by a subgroup $Z\subset Z(\widetilde{G})$, Theorem \ref{thm:dual-of-intquot} may be interpreted as an SYZ mirror symmetry statement relating Hitchin fibrations for arbitrary semisimple Langlands dual groups coupled to nontrivial finite $B$-fields \cite{SYZ96,HauTha03}.

In fact, in type A it is possible to derive from this a topological mirror symmetry statement in the vein of \cite{HauTha03} by applying the results of \cite{GWZ17}. In that paper, Groechenig, Wyss and Ziegler prove an equality of ``gerbe-twisted stringy $E$-polynomials'' -- roughly speaking, these record appropriately defined Hodge numbers for complex varieties with at worst orbifold singularities, equipped with a finite group gerbe\footnote{There are additionally some (fairly harmless) arithmetic conditions, since \cite{GWZ17} uses techniques that require the moduli spaces to be defined over rings more general than $\bC$.} -- for the spaces $\Higgs_{SL_n}^d(C)$ and $\Higgs_{PGL_n}^e(C)$ where $d$ and $e$ are both coprime to $n$. In the setup we have been considering, we can say the following:
\begin{itemize}
	\item {\bf Type A:} The equality of stringy E-polynomials will hold for $\frac{\Higgs_{SL_n}^d(C)}{\Gamma}$ and $\frac{\Higgs_{SL_n}^e(C)}{\ann(\Gamma)}$, respectively equipped with the $e^{th}$ and $d^{th}$ powers of the gerbe of liftings, when $\gcd(d,n)=\gcd(e,n)=1$. When $\Gamma$ is isotropic the arguments of \cite{GWZ17} apply directly and the only thing to check is that the isogeny $\frac{\Higgs_{SL_n}^d(C)}{\Gamma} \to \frac{\Higgs_{SL_n}^e(C)}{\ann(\Gamma)}$ is self-dual, which follows from the natural Pontrjagin self-duality of $\frac{\ann(\Gamma)}{\Gamma}$. When $\Gamma$ is not isotropic one may apply the results of the sequel \cite{GWZ18} where the same authors show that the hypothesis that the isogeny is self-dual may be weakened; the relevant isogenies between Hitchin Pryms in this case were constructed by Ng\^o\footnote{I thank Michael Groechenig for directing me to this construction.} \cite[4.18.1]{Ngo10}.
	\item {\bf Outside of Type A:} We hit a serious snag here, in that the coprimality assumption that ensured the existence of smooth components of $\mbf{M}_{\mf{g}}(C)$ in the Type A setup no longer applies. Worse, one can in fact guarantee the existence of strictly semistable points in \emph{every} connected component \cite{Rama75}!

	Nevertheless, away from the singular locus we have all of the ingredients that we want -- e.g.\ the arithmetic gerbe and Ng\^o's isogeny between dual Hitchin Pryms -- and so one might still hope to obtain a topological mirror symmetry statement either by extending the results of \cite{GWZ17,GWZ18} to allow for some singular behaviour, or by varying the stability condition and studying a non-singular birational model as in \cite{ChaLau10}.
\end{itemize}
\end{remark}

We may deduce from Theorem \ref{thm:dual-of-intquot} the existence of a collection of self-dual commutative group stacks:

\begin{cor}\label{cor:self-dual-stacks}
In the setup of Theorem \ref{thm:dual-of-intquot} suppose that $\widetilde{G}=\widetilde{^LG}$ (e.g.\ $\widetilde{G}$ is ADE type), and that $\Gamma = \ann(\Gamma)$ is a Lagrangian subgroup of $H^1(C;Z(\widetilde{G}))$. Then,
\begin{align}\label{eq:self-dual-M}
	\left(\left.\frac{\mc{M}_{\mf{g}}(C)}{\Gamma}\right|_{\Hitch_{\mf{g}}(C)\setminus\Delta}\right)^D
		\cong \left.\frac{\mc{M}_{\mf{g}}(C)}{\Gamma}\right|_{\Hitch_{\mf{g}}(C)\setminus\Delta},
\end{align}
i.e.\ $\left.\frac{\mc{M}_{\mf{g}}(C)}{\Gamma}\right|_{\Hitch_{\mf{g}}(C)\setminus\Delta}$ is a self-dual commutative group stack.
\end{cor}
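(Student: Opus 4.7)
The plan is to obtain this as an essentially immediate consequence of Theorem \ref{thm:dual-of-intquot}, once we feed in the two hypotheses (self Langlands duality of $\widetilde{G}$ and the Lagrangian condition on $\Gamma$). There is very little genuine work to do beyond bookkeeping identifications.

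First I would start from the conclusion of Theorem \ref{thm:dual-of-intquot}, namely the isomorphism
\[
\left(\frac{\mc{M}_{\mf{g}}(C)}{\Gamma}\right)^D \;\cong\; \frac{\mc{M}_{^L\mf{g}}(C)}{\ann(\Gamma)},
\]
(with the implicit restriction to $\Hitch_{\mf{g}}(C)\setminus\Delta$, identified with $\Hitch_{^L\mf{g}}(C)\setminus\Delta$ via \cite[Theorem A]{DonPan12} as in Remark \ref{rk:identify-hitchin-bases}). Next, I would substitute the two hypotheses: the assumption $\widetilde{G} \cong \widetilde{^LG}$ (which holds for any simply-laced $\mf{g}$) induces a canonical identification $\mc{M}_{\mf{g}}(C) \cong \mc{M}_{^L\mf{g}}(C)$, since the construction of $\mc{M}_{(-)}(C)$ in Definition \ref{def:M-moduli} depends only on the simply-connected form; and the Lagrangian hypothesis $\Gamma = \ann(\Gamma)$ matches the two quotienting subgroups on the nose. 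Composing these identifications with the duality isomorphism gives the claimed self-duality.

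The only points requiring a moment's care are that (i) the isomorphism $\mc{M}_{\mf{g}}(C) \cong \mc{M}_{^L\mf{g}}(C)$ intertwines the two copies of $H^1(C;Z(\widetilde{G}))$ acting on each side (which follows because under $\widetilde{G}\cong\widetilde{^LG}$ we have $Z(\widetilde{G})\cong Z(\widetilde{^LG})$ canonically, and the symmetric perfect pairing of Remark \ref{rk:centre-of-group} is preserved, so the induced skew pairing on $H^1(C;Z(\widetilde{G}))$ matches), and (ii) under this pairing the condition ``$\Gamma$ Lagrangian'' on one side corresponds to the same condition on the other, so that $\ann(\Gamma)$ transports back to $\Gamma$. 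Both verifications are essentially tautological given the definitions in Section \ref{ss:CGS} and Appendix \ref{s:structure-results}.

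The hardest step, such as it is, is the identification in point (i): one needs to check that the self-Langlands-duality identification $\widetilde{G}\cong\widetilde{^LG}$ is compatible with the constructions of $\mc{M}_{\mf{g}}(C)$ and $\mc{M}_{^L\mf{g}}(C)$ at the level of the Langlands dual torus and the embedding $\tau$ (i.e.\ that $^L\tau$ on the dual side corresponds to $\tau$ under the self-duality). Since $\mc{M}_{\mf{g}}(C)$ was shown in Proposition \ref{prop:tau-indept} to be independent of $\tau$ up to non-canonical isomorphism, this compatibility is available, and the rest of the proof is a direct substitution into Theorem \ref{thm:dual-of-intquot}.
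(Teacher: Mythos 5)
Your proposal is correct and matches the paper, which offers no separate proof and treats the corollary as an immediate consequence of Theorem \ref{thm:dual-of-intquot} upon substituting $\widetilde{G}=\widetilde{^LG}$ and $\Gamma=\ann(\Gamma)$. Your extra care about the compatibility of the identification $\mc{M}_{\mf{g}}(C)\cong\mc{M}_{^L\mf{g}}(C)$ with the pairing on $H^1(C;Z(\widetilde{G}))$ is a reasonable elaboration of the same argument, not a different route.
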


\begin{remark}\label{rk:syz-2}
As per Remark \ref{rk:syz-1}, Corollary \ref{cor:self-dual-stacks} may be interpreted as the statement that a particular space is \emph{self} SYZ mirror dual. Combined with the consistency of $\frac{\mc{M}_{\mf{g}}(C)}{\Gamma}$ with physical expectations \cite{GMN-BPS,Tachi14}, it is therefore reasonable to conjecture that this space is the 3d Coulomb branch for a theory of class $\mc{S}$.
\end{remark}

Finally, we may deduce from the above results the following (non-stacky) corollary:

\begin{cor}\label{cor:duality-thm}
With notation as above, $\left.\frac{\Higgs_{\tilde{G}}(C)}{\Gamma}\right|_{\Hitch_{\mf{g}}(C)\setminus\Delta}$ and $\left.\frac{\Higgs_{\widetilde{^LG}}(C)}{\ann(\Gamma)}\right|_{\Hitch_{^L\mf{g}}(C)\setminus\Delta}$ are torsors for dual abelian schemes. In particular, if $\widetilde{G} = \widetilde{^LG}$ and $\ann(\Gamma)=\Gamma$ then $\left.\frac{\Higgs_{\tilde{G}}(C)}{\Gamma}\right|_{\Hitch_{\mf{g}}(C)\setminus\Delta}$ is a torsor for a self-dual abelian scheme.
\end{cor}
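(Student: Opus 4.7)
The plan is to extract this corollary directly from the stacky duality of Theorem \ref{thm:dual-of-intquot} by passing to the abelian-scheme factor in the local product decomposition of Proposition \ref{prop:M-local}. The guiding observation is that, locally on $\Hitch_{\mf{g}}(C)\setminus\Delta$, the stack $\mc{M}_{\mf{g}}(C)/\Gamma$ splits as a product of three very different kinds of pieces -- an abelian scheme $\Higgs_{\widetilde{G}}(C)/\Gamma$, a finite discrete component group, and a finite gerbe -- and shifted Cartier duality acts on each factor independently and in a well-understood way.

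Concretely, I would begin with the local decomposition from diagram \eqref{eq:gamma-local},
\[
\mc{M}_{\mf{g}}(C)/\Gamma \;\simeq\; (\Higgs_{\widetilde{G}}(C)/\Gamma)\times Z(\widetilde{G})\times BZ(\widetilde{G}),
\]
and the analogous decomposition of $\mc{M}_{^L\mf{g}}(C)/\ann(\Gamma)$. Applying shifted Cartier duality factor-by-factor, using Example \ref{eg:dual-BK} to exchange the finite discrete and $BK$ pieces (turning $Z(\widetilde{G})$ into $BZ(\widetilde{^LG})$ and vice versa via the natural pairing between $Z(\widetilde{G})$ and $Z(\widetilde{^LG})$), and then invoking Theorem \ref{thm:dual-of-intquot} to identify the duals, I would match factors and read off an isomorphism
\[
(\Higgs_{\widetilde{G}}(C)/\Gamma)^D \;\cong\; \Higgs_{\widetilde{^LG}}(C)/\ann(\Gamma)
\]
of abelian schemes over the open Hitchin base. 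Because the Hitchin section trivialises each fibre as an abelian variety (for simply-connected $\widetilde{G}$ the fibre $\Higgs_{\widetilde{G}}(C)_\sigma$ is connected and is its own Hitchin Prym, and quotienting by the finite subgroup $\Gamma$ preserves this), both sides are abelian schemes, which in particular are torsors for themselves; the displayed isomorphism then exhibits them as torsors for dual abelian schemes. The self-dual case is immediate upon setting $\widetilde{G}=\widetilde{^LG}$ and $\Gamma=\ann(\Gamma)$.

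The main obstacle is verifying that the abelian-scheme factor extracted from $(\mc{M}_{\mf{g}}(C)/\Gamma)^D$ via Cartier duality is \emph{canonically} $\Higgs_{\widetilde{^LG}}(C)/\ann(\Gamma)$ and not merely an isogenous abelian scheme -- in other words, that the matching of factors is compatible with the pairing used to define $\ann(\Gamma)$. This amounts to reusing the local analysis already carried out in the proof of Theorem \ref{thm:dual-of-intquot} -- in particular, the identification of $\left(\Higgs_{\widetilde{G}}(C)/\Gamma\right)^D$ with $\Higgs_{\widetilde{G}}(C)/\ann(\Gamma)$ as dual isogenies of abelian varieties via diagram \eqref{eq:gamma-dual-local} -- so the remaining bookkeeping is to confirm that the identifications are natural enough to promote from each local product patch to a global isomorphism over $\Hitch_{\mf{g}}(C)\setminus\Delta$. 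Since shifted Cartier duality is local (Proposition \ref{p:local-reflexive}) and both sides are defined as global quotients, this promotion is formal once the factor-matching is established.
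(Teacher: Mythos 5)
Your proposal is correct and follows essentially the same route as the paper: the paper's own proof is simply to restrict the stacky duality of Theorem \ref{thm:dual-of-intquot} to the neutral component of the coarse moduli space, and your factor-by-factor analysis of the local decompositions \eqref{eq:gamma-local}--\eqref{eq:gamma-dual-local} is just an unpacking of that one-line argument, with the canonicity of the identification already supplied by the proof of Theorem \ref{thm:dual-of-intquot}.
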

\begin{proof}
This follows from the previous results by restricting to the neutral component of the coarse moduli space.
\end{proof}


\subsection{Equivalence of derived categories}\label{ss:derived-equivalence}

Let us conclude this section by noting the implications for the derived categories of the moduli stacks we have been studying. Throughout this subsection we always work away from the discriminant locus in the Hitchin base.

Recall that given dual abelian schemes $X$ and $Y=X^D$ over a base $B$ with sheaves of sections $\mc{X}$ and $\mc{Y}$, an argument from the Leray spectral sequence implies that
\begin{align}\label{eq:X-torsors}
	H^1(B;\mc{X})=\left\{\begin{array}{c} \text{equivalence classes of} \\ \text{$X$-torsors over $B$} \end{array}\right\}
	\simeq H^2(Y;\mc{O}^\times) =
	\left\{\begin{array}{c} \text{equivalence classes of} \\ \text{$\mc{O}^\times$-gerbes over $Y$} \end{array}\right\}
\end{align}
provided that the local system $R^2\pi_{Y,\ast}\mc{O}^\times$ has no sections (generically true) and the pullback map $H^2(B;\mc{O}^\times)\to H^2(Y;\mc{O}^\times)$ is trivial; furthermore, if certain compatibility conditions are met then given $\beta\in H^1(B;\mc{X})$ and $\alpha\in H^1(B;\mc{Y})$ we may construct an $\mc{O}^\times$-gerbe determined by $\alpha$ over the $X$-torsor labelled by $\beta$ \cite{B-B09,DonPan08}. Call this gerbe $_\alpha X_\beta$.

\begin{cor}\label{cor:derived-cats}
Over $\Hitch_{\mf{g}}(C)\setminus\Delta$ there is an equivalence of bounded derived categories of coherent sheaves
\begin{align}\label{eq:full-stack-equiv}
	D^b_c\left( \frac{\mc{M}_{\mf{g}}(C)}{\Gamma} \right)
		\simeq
	D^b_c\left( \frac{\mc{M}_{^L\mf{g}}(C)}{\ann(\Gamma)} \right)
\end{align}
implemented by a Fourier-Mukai transform. Furthermore, for every $\beta\in\pi_0\left(\frac{\mc{M}_{\mf{g}}(C)}{\Gamma}\right)=\pi_1(G_{\ad})= Z(\widetilde{^LG})^\vee$ and $\alpha\in\pi_0\left(\frac{\mc{M}_{^L\mf{g}}(C)}{\ann(\Gamma)}\right)=\pi_1(^LG_{\ad})= Z(\widetilde{G})^\vee$ the equivalence \eqref{eq:full-stack-equiv} induces a Fourier-Mukai equivalence
\begin{align}\label{eq:componentwise-equiv}
	D^b_c\left( {_\alpha(\mathbf{M}_{\mf{g}}(C)/\Gamma)_\beta} \right)
		\simeq
	D^b_c\left( {_{-\beta}(\mathbf{M}_{^L\mf{g}}(C)/\ann(\Gamma))_\alpha} \right)
\end{align}
between the derived categories of weight 1 sheaves on the induced $\mc{O}^\times$-gerbes.
\end{cor}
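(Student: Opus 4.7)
The plan is to deduce both claims from the shifted Cartier duality established in Theorem \ref{thm:dual-of-intquot}, together with the general Fourier--Mukai formalism for dual commutative group stacks. First, I would appeal to the fact that shifted Cartier duality between sufficiently nice commutative group stacks is implemented by a Fourier--Mukai equivalence on bounded derived categories of coherent sheaves. In the abelian-variety (and abelian-scheme) setting this is classical Mukai and its relative analogue; in the commutative-group-stack setting, relative over a base, the relevant statement appears in Arinkin's appendix to \cite{DonPan08} and in \cite{Bro14, Camp17}. The kernel is the universal multiplicative $\mb{G}_m$-gerbe (Poincar\'e sheaf) on the product $\frac{\mc{M}_{\mf{g}}(C)}{\Gamma}\times\frac{\mc{M}_{^L\mf{g}}(C)}{\ann(\Gamma)}$ furnished by the pairing of Theorem \ref{thm:dual-of-intquot}, and the resulting integral transform yields \eqref{eq:full-stack-equiv}.

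For the componentwise refinement I would use the standard decomposition of the derived category of a commutative group stack indexed by $\pi_0$ together with the character weights of the $\pi_1$-action on stabilisers. Under Fourier--Mukai these two gradings are exchanged: a sheaf on $\mc{A}$ supported on the component $\beta\in\pi_0(\mc{A})$ and of $\pi_1(\mc{A})$-weight $\alpha$ transforms to a sheaf on $\mc{A}^D$ supported on $\alpha\in\pi_0(\mc{A}^D)\cong\pi_1(\mc{A})^\vee$ with weight labelled by $\beta$ under $\pi_1(\mc{A}^D)\cong\pi_0(\mc{A})^\vee$. This bigrading exchange, in the relative and gerbey setting required here, is exactly the content of Appendix \ref{s:FM}. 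Restricting to a fixed pair of components on each side and then extracting the weight-$1$ subcategory produces an equivalence between derived categories of sheaves on the induced $\mc{O}^\times$-gerbes over the coarse moduli spaces; combining with \eqref{eq:X-torsors} identifies these gerbes with the ${_\alpha\mathbf{M}_\beta}$ and ${_{-\beta}\mathbf{M}_\alpha}$ of the statement.

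The main obstacle I anticipate is the careful bookkeeping of signs and identifications in the componentwise statement. Specifically, I would need to verify that under the Poincar\'e pairing the gerbe label transforms as $\beta\mapsto -\beta$, and that the identification $\pi_0(\mc{M}_{\mf{g}}(C)/\Gamma)\cong Z(\widetilde{^LG})^\vee$ is precisely the one induced by Theorem \ref{thm:dual-of-M} (rather than differing by the Langlands involution or a cocharacter-versus-character flip). Making this precise would proceed by choosing Deligne-style two-term complex presentations $[A^{-1}\to A^0]$ for the commutative group stacks in question, dualising explicitly via the Kummer-type exact sequence \eqref{eq:Kum-gen}, and then tracking through the proof of Theorem \ref{thm:dual-of-M} how connected components on one side are matched with automorphism groups of stabilisers on the other. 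Once these identifications are pinned down, the remainder of the componentwise equivalence is a direct consequence of the classical Fourier--Mukai duality for the dual abelian schemes of Corollary \ref{cor:duality-thm}, fibred over $\Hitch_{\mf{g}}(C)\setminus\Delta$.
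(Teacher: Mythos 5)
Your proposal follows the same overall strategy as the paper: the full-stack equivalence \eqref{eq:full-stack-equiv} is deduced from Theorem \ref{thm:dual-of-intquot} plus reflexivity via the Fourier--Mukai formalism for dual commutative group stacks (Arinkin's appendix to \cite{DonPan08}), and the componentwise statement comes from exchanging the $\pi_0$-grading with the gerbe-weight grading. Where you diverge is in how the componentwise refinement is made rigorous. The paper does not re-derive the exchange by hand: it verifies the compatibility conditions of \cite{B-B09} -- observing that the relevant $\mc{O}^\times$-gerbes are induced from the finite group gerbes via $\alpha\colon Z(\widetilde{G})\to\mb{C}^\times\subset\mc{O}^\times$ (and similarly for $\beta$) -- and then applies \cite[\S5--6]{B-B09}, especially Corollary 6.2, exactly as in \cite{DonPan12}; Appendix \ref{s:FM} is used only (per Remark \ref{rk:sheaves-on-gerbes}) to explain the sign and weight conventions in the split toy model $\mb{Z}\times B\mb{G}_m$, not as a proof in the relative, gerby setting. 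Your plan to carry out the $\pi_0$/$\pi_1$-weight bookkeeping directly, identify the weight-$1$ pieces with twisted sheaves on the induced gerbes via \eqref{eq:X-torsors}, and track signs through Deligne-complex presentations is plausible, but executing it rigorously would essentially amount to reproving the parts of Ben-Bassat's results that the paper cites (that the restriction of the equivalence to a pair of components is again a Fourier--Mukai transform between twisted derived categories of gerbes over dual torsors, with the stated $(\alpha,\beta)\mapsto(-\beta,\alpha)$ behaviour); so the citation route buys a much shorter argument, at the cost of having to check the compatibility hypotheses, which your write-up omits. With that caveat, your outline is correct and the anticipated sign issue is exactly the point the paper addresses via \cite{B-B09} and Appendix \ref{s:FM}.
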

\begin{proof}\label{proof:derived-cats}
Since the stacks involved are reflexive, \eqref{eq:full-stack-equiv} follows immediately from \eqref{eq:dual-of-intquot} (see e.g.\ \cite[Appendix]{DonPan08}). The compatibility conditions of \cite{B-B09} are satisfied since we may explicitly construct the desired $\mc{O}^\times$-gerbes as induced from the corresponding finite group gerbes via $\alpha: Z(\widetilde{G})\to\mb{C}^\times\subset\mc{O}^\times$, and similarly for $\beta$. Then as in \cite{DonPan12} we may apply the results of \cite[\S5-6]{B-B09} to obtain the statement \eqref{eq:componentwise-equiv} (cf.\ especially \cite[Corollary 6.2]{B-B09}).
\end{proof}

\begin{remark}\label{rk:sheaves-on-gerbes}
Recalling that the category of sheaves on an $\mc{O}^\times$ gerbe is $\mb{Z}$-graded, the minus sign $(\alpha,\beta)\to(-\beta,\alpha)$ that appears in \eqref{eq:componentwise-equiv} may be reinterpreted as saying that the Fourier-Mukai transform induces an equivalence between
\begin{align}\label{eq:Db-weights}
	D^b_c({_\alpha X_\beta},+1) \simeq D^b_c({_\beta Y_\alpha},-1),
\end{align}
where the $\pm 1$ denote the weight $\pm 1$ components of the corresponding derived categories. The change in weight arises from a simple analysis of how one expects the Fourier-Mukai transform to act on an $\mc{O}^\times$ gerbe over a torsor, which I have included as Appendix \ref{s:FM}. 
\end{remark}

\label{c}


\section{Examples of dual spaces}\label{s:many-egs}

To conclude, let us see how the results of Section \ref{s:stack-dual} may be used to both describe new dualities and reinterpret some previously known examples. Important Remark \ref{rk:good-locus} still applies -- i.e.\ all moduli stacks in this section are defined over the complement of the discriminant locus $\Delta\subset\Hitch_{\mf{g}}(C)$.

\begin{eg}\label{eg:A1theory-ABcycles}
An analysis of $A_1$ theories of class $\mc{S}$ was performed in \cite{GMN-BPS}. There Gaiotto, Moore and Neitzke explain that a line operator in the $A_1$ theory corresponds to a simple closed path on $C$, and that a collection of line operators may be simultaneously included in the theory only if a ``mutual locality condition'' is satisfied. Geometrically, the mutual locality condition on a collection of line operators $\ms{L}$ becomes the requirement that that the number of intersection points of any two paths in $\ms{L}$ be even -- by passing to Poincar\'e dual cocycles, this induces an isotropic subgroup of $H^1(C;\mu_2)$ with respect to the natural skew-pairing.

A well-defined $A_1$ theory requires a choice of a maximal collection of mutually local line operators, which induces a Lagrangian subgroup $\Gamma\subset H^1(C;\mu_2)$. \cite{GMN-BPS} propose that the resulting moduli space ought not to be $\Higgs_{SL_2}(C)$, but instead should be $\Higgs_{SL_2}(C)/\Gamma$. Corollary \ref{cor:duality-thm} tells us that this space is indeed self-dual, while Corollary \ref{cor:self-dual-stacks} suggests that if we wish to consider Higgs fields on topologically non-trivial bundles then we will have to account for some stacky structure in the form of a 2-form $B$-field \cite{HauTha03}.
\end{eg}

\begin{eg}\label{eg:HauTha-results}
Theorem \ref{thm:dual-of-M} in fact gives another derivation of the SYZ mirror symmetry results of Hausel and Thaddeus for $SL/PGL$-Higgs bundles \cite{HauTha03}. To see this, observe that for type $A_{n-1}$ \eqref{eq:M-mod-H1-dual} becomes
\begin{align}\label{eq:type-A-dual}
	\left( \frac{\mc{M}_{\mf{sl}_n\mb{C}}(C)}{H^1(C;\mb{Z}/n\mb{Z})}\right)^D
		\cong \mc{M}_{\mf{sl}_n\mb{C}}(C).
\end{align}
The right hand side of this equation is the moduli stack of $GL_n\mb{C}$-Higgs bundles $(E,\phi)$ equipped with an isomorphism $\det(E)\simeq\mc{O}_C(dx)$ for some degree $d\in\mb{Z}/n\mb{Z}$\footnote{The dependence on $d$ mod $n$ rather than $d\in\mb{Z}$ is also observed in \cite{HauTha03}.} and such that $\tr\phi =0$, and the object we are dualising on the left hand side is the moduli space of $PGL_n\mb{C}$-Higgs bundles equipped with the gerbe of liftings of the universal projective Higgs bundle to a universal $GL_n$-Higgs bundle (again, tracefree and equipped with an isomorphism $\det(E)\simeq\mc{O}_C(dx)$). The exact form of \cite[Thm.\ 3.7]{HauTha03} for $d,e\in\mb{Z}/n\mb{Z}$ then resembles \eqref{eq:componentwise-equiv}:
\begin{align}\label{eq:componentwise-typeA}
	D^b_c\left( {_d \mathbf{M}_{\mf{sl}_n}(C)_e} \right)
		\simeq
	D^b_c\left( {_{-e}(\mathbf{M}_{\mf{sl}_n}(C)/H^1(C;\mb{Z}/n\mb{Z}))_d} \right).
\end{align}
\end{eg}

\begin{eg}\label{eq:SO2n}
Consider the group $G=SO(2n)$. This is a self Langlands dual group, and so by the results of Donagi and Pantev \cite{DonPan12} gives rise to a self-dual moduli stack of Higgs bundles. It is natural to ask whether or not this space fits into the story of this paper.

In fact it does: for simplicity I will discuss this duality on the level of coarse moduli spaces. The centre of the universal cover $\tilde{G}=Spin(2n)$ is either $\mu_2\times\mu_2$ (if $2n=4k$) or $\mu_4$ (if $2n=4k+2$). The central subgroup corresponding to $SO(2n)$ is either the diagonal copy of $\mu_2\subset\mu_2\times\mu_2$ or the unique $\mu_2$ subgroup of $\mu_4$ -- in either case this subgroup is isotropic with respect to the natural pairing on $Z(\widetilde{G})$, and so induces an isotropic subgroup $H^1(C;\mu_2)\subset H^1(C;Z(\widetilde{G}))$. By nondegeneracy of the skew-pairing on $H^1(C;Z(\widetilde{G}))$ this subgroup is maximal isotropic, and the resulting abelian scheme $\frac{\Higgs_{Spin(2n)}(C)}{H^1(C;\mu_2)}$ is isomorphic to $\Higgs_{SO(2n)}^0(C)$, the moduli space of $SO(2n)$-Higgs bundles with vanishing second Stiefel-Whitney class.

To make this example extremely concrete, consider the first non-trivial case $G=SO(4)$. The universal cover is $\tilde{G}=Spin(4)=SU(2)\times SU(2)$ with centre $\mu_2\times\mu_2$, corresponding to the $\mu_2$ centres of each of the $SU(2)$ factors. $Spin(4)$ double covers the spaces $SO(3)\times SU(2)$, $SU(2)\times SO(3)$, and $SO(4)$, corresponding respectively to the subgroups $\mu_2 \times 1$, $1\times \mu_2$, and the diagonal subgroup $\Delta$. Denote the unique nondegenerate pairing on $\mu_2$ by $\Upsilon_2$; then the pairing on the central $\mu_2\times \mu_2$ is
\begin{align}\label{eq:induced-mod2-pairing}
	\Upsilon((a,b),(c,d))=\Upsilon_2(a,c)\Upsilon_2(b,d) .
\end{align}
On the diagonal subgroup corresponding to $SO(4)$, this pairing is identically 1, since $\Upsilon((a,a),(b,b))=\Upsilon_2(a,b)^2 =1$. Hence the subgroup $H^1(C;\Delta)\subset H^1(C;\mu_2\times \mu_2)$ is isotropic, and by nondegeneracy of the cup product pairing and of $\Upsilon$ on $\mu_2\times\mu_2$ it is maximal isotropic -- hence the results of the previous paragraph apply.
\end{eg}

\begin{eg}\label{eg:SO-Sp-dual}
Finally, it is interesting to consider what the duality of Theorem \ref{thm:dual-of-M} looks like for the simply-connected groups $Sp(2n)$ and $Spin(2n+1)$, whose Lie algebras are exchanged by Langlands duality.

First, consider the isomorphism
\begin{align}\label{eq:Spin-odd-dual}
	\left( \frac{\mc{M}_{\mf{sp}(2n)}(C)}{H^1(C;\mu_2)} \right)^D
		\cong\mc{M}_{\mf{so}(2n+1)}(C) .
\end{align}
The stack we are dualising on the left hand side of \eqref{eq:Spin-odd-dual} is the moduli space of $PSp(2n)=Sp(2n)/\mu_2$-Higgs bundles equipped with the gerbe of liftings of the universal $PSp(2n)$-Higgs bundle to a universal symplectic Higgs bundle. To interpret the right hand side, use the standard embedding $\mu_2=Z(Spin(2n+1)) \subset \mb{C}^\times$ to construct
\begin{align}\label{eq:Spin-c}
	\frac{Spin(2n+1)\times\mb{C}^\times}{\mu_2}= Spin^c(2n+1)_{\mb{C}}
\end{align}
the complexification of the compact group $Spin^c(2n+1)$. Fix a point $x\in C$. Then the moduli stack $\mc{M}_{\mf{so}(2n+1)}(C)$ may be identified as the stack of $Spin^c(2n+1)_{\mb{C}}$-Higgs bundles $(E,\phi)$ equipped with an isomorphism
\begin{align}\label{eq:push-Spinc}
\pd_\ast(E)\simeq\left\{\begin{array}{c c} \mc{O}_C & \text{or} \\ \mc{O}_C(x) \end{array}\right.
\end{align}
and with $\phi$ ``tracefree'' (cf. \eqref{diag:Higgs-stack-proj}). Specifically, the neutral component $\mc{M}_{\mf{so}(2n+1)}^0(C)$ may be identified with the usual moduli stack $\HiggsSt_{Spin(2n+1)}(C)$, and the non-neutral component $\mc{M}_{\mf{so}(2n+1)}^1(C)$ may be identified as the moduli stack of $Spin^c(2n+1)_{\mb{C}}$-Higgs bundles $(E,\phi)$ equipped with an isomorphism $\pd_\ast(E)\simeq \mc{O}_C(x)$. Since $H^2(C;\mu_2)=\mu_2$ we have that $\mc{M}_{\mf{so}(2n+1)}(C) = \mc{M}_{\mf{so}(2n+1)}^0(C)\coprod \mc{M}_{\mf{so}(2n+1)}^1(C)$.

Next consider the isomorphism
\begin{align}\label{eq:Sp-dual}
	\left( \frac{\mc{M}_{\mf{so}(2n+1)}(C)}{H^1(C;\mu_2)} \right)^D
		\cong\mc{M}_{\mf{sp}(2n)}(C) .
\end{align}
We have already seen one interpretation of the left hand side in terms of $Spin^c(2n+1)_{\mb{C}}$-Higgs bundles -- another interpretation is that on the left hand side we are dualising the moduli space of $SO(2n+1)$-Higgs bundles equipped with the gerbe of liftings of the universal $SO(2n+1)$-Higgs bundle to a universal $Spin(2n+1)$-Higgs bundle.

To interpret the right hand side we again construct the corresponding group $\widetilde{G}_\tau$ -- this time the group is
\begin{align}\label{eq:Sp-c} 
	GSp(2n)_{\mb{C}}:=\frac{Sp(2n,\mb{C})\times\mb{C}^\times}{\mu_2} ,
\end{align}
the \emph{general symplectic group} of linear automorphisms which preserve a given symplectic form up to a scaling factor. Then $\mc{M}_{\mf{sp}(2n)}(C)$ is -- imprecisely -- the stack of $GSp(2n)_{\mb{C}}$-Higgs bundles ``with fixed second Stiefel-Whitney class, considered up to parity''. The precise interpretation of the two connected components is analogous to the interpretation for $Spin(2n+1)$: $\mc{M}_{\mf{sp}(2n)}^0(C)$ is isomorphic to the moduli stack $\HiggsSt_{Sp(2n)}(C)$, and $\mc{M}_{\mf{sp}(2n)}^1(C)$ may be identified as the moduli stack of $GSp(2n)_{\mb{C}}$-Higgs bundles $(E,\phi)$ equipped with an isomorphism $\pd_\ast(E)\simeq \mc{O}_C(x)$, and satisfying $\tr(\phi)=0$.
\end{eg}

\label{d}

\appendix
\appendixpage
\addappheadtotoc
\renewcommand*{\thesection}{\Alph{section}}


\section{Sheaves on $\mc{O}^\times$ gerbes and the Fourier-Mukai transform}\label{s:FM}

In this appendix, I wish to explain the (deceptively simple!) reason why one expects the Fourier-Mukai transform to send sheaves of weight +1 on $_\alpha X_\beta$ to sheaves of weight -1 on the dual $_\beta Y_\alpha$, where the notation agrees with that of Section \ref{s:stack-dual} and \cite{B-B09}. I do make one notational change in this section, suggestive of the idea that this appendix may be applicable in more situations than the one in this paper: I replace the sheaf $\mc{O}^\times$ with the multiplicative group $\mb{G}_m$.

Recall that a multiplicative $\mb{G}_m$ gerbe over a commutative group stack $A$ may be described as an extension of commutative group stacks
\begin{align}\label{eq:cgs-ext}
	0 \to B\mb{G}_m \to \mc{A}\to A \to 0.
\end{align}
Similarly, a given a torsor $A'$ for a commutative group stack $A$ we may construct an extension of $\mb{Z}$ by $A$
\begin{align}\label{eq:cgs-tors}
	0 \to A \to \widetilde{A} \to \mb{Z} \to 0
\end{align}
such that $A' \cong \widetilde{A}_1$ is the fibre over $1\in\mb{Z}$ \cite[Appendix]{DonPan08}. Since we are interested in $\mb{G}_m$ gerbes over torsors, and since the weight grading on the category of sheaves on a $\mb{G}_m$ gerbe is induced by the $B\mb{G}_m$ substack, in order to understand how the Fourier-Mukai transform acts on weights it is sufficient to examine the trivial case $\mb{Z}\times B\mb{G}_m$.

The categories of quasicoherent sheaves on $\mb{Z}$ and $B\mb{G}_m$ are both given by graded vector spaces, where the grading for $\mb{Z}$ is induced by the support of the sheaf and the grading for $B\mb{G}_m$ comes from the identification $\QCoh(B\mb{G}_m)=\Rep(\mb{G}_m)$.

Denote by $\mb{C}_m \in \QCoh(B\mb{G}_m)$ the irreducible $\mb{G}_m$-representation of weight $m$, and by $\mb{C}_{\{n\}}\in\QCoh(\mb{Z})$ the 1-dimensional sheaf supported on $n\in\mb{Z}$. Thinking of $\mb{Z}$ as $\Hom(B\mb{G}_m,B\mb{G}_m)$, the Poincar\'e sheaf $\mc{P}$ classified by the canonical evaluation map
\begin{align}\label{eq:}
	ev:\mb{Z}\times B\mb{G}_m \to B\mb{G}_m
\end{align}
is given by $\mc{P}=\prod_{n\in\mb{Z}} \mb{C}_{\{n\}}\boxtimes\mb{C}_n$. Consider the projections
\begin{equation}\label{diag:ZBG-proj}
\begin{tikzcd}
			& \mb{Z}\times B\mb{G}_m \ar[dl,"\pi"]\ar[dr,"\rho"'] \\
	\mb{Z}	&	& B\mb{G}_m
\end{tikzcd}
\end{equation}
The corresponding pushforwards are
\begin{align}
	\rho_\ast(V)	& = \prod_{n\in\mb{Z}} V_{\{n\}} \in \QCoh(B\mb{G}_m), 	\label{eq:rho-push} 	\\
	\pi_\ast(V) 	& = V^{\mb{G}_m} \in\QCoh(\mb{Z}). \label{eq:pi-push}
\end{align}
Now, let's consider the action of the integral transform on the simple objects $\mb{C}_{\{n\}}$ and $\mb{C}_m$ of our categories. We have
\begin{align}\label{eq:Z-FM-BG}
	\rho_\ast(\mc{P}\tens\pi^\ast \mb{C}_{\{n\}})
		= \rho_\ast(\mb{C}_n \boxtimes \mb{C}_{\{n\}})
		= \mb{C}_n
\end{align}
so that sheaves supported on $n\in\mb{Z}$ become weight $n$ $\mb{G}_m$-representations. On the other hand,
\begin{align}\label{eq:BG-FM-Z}
	\pi_\ast(\mc{P}\tens\rho^\ast \mb{C}_m)
		= \pi_\ast\left(\prod_{k\in\mb{Z}} \mb{C}_{\{k\}}\boxtimes \mb{C}_{k+m}\right)
		= \mb{C}_{\{-m\}},
\end{align}
so representations of weight $m$ become sheaves supported on $-m \in \mb{Z}$. Considering now the category of sheaves on $\mb{Z}\times B\mb{G}_m$, which is $\mb{Z}\times\mb{Z}$-graded by weight and support, we see that the Fourier-Mukai functor acts on $\mb{Z}\times\mb{Z}$ as
\begin{align}\label{eq:FM-support-weight}
	(m,n) \mapsto (n,-m),
\end{align}
and in particular $(-1,1)\mapsto (1,1)$. Since in \eqref{eq:cgs-tors} sheaves supported over $1\in\mb{Z}$ are exactly those sheaves supported on the torsor $A'$, we see that weight $-1$ sheaves on $A'$ are sent by the Fourier-Mukai transform to weight $+1$ sheaves on its dual.

\label{e}


\section{Fixed points of Weyl group actions}\label{s:Weyl}


Assume that $G$ is a simple and connected complex algebraic group, with a choice of maximal torus $H\subset G$. Via the exponential map we have an (analytic and $W$-equivariant) identification
\begin{align}\label{eq:H-from-lattice}
	H \cong \frac{X_\bullet(H)\tens\bC}{X_\bullet(H)} = X_\bullet(H)\tens\bC^\times
\end{align}
where $X_\bullet(H) = \Hom(\bC^\times, H) \subset \mf{h}$ is the cocharacter lattice of $H$.

Recall that the Weyl reflection $s_\alpha:\mf{h}^* \to \mf{h}^*$ corresponding to the root $\alpha$ is defined by\footnote{Recall that our convention is that $\alpha$ defines a character of $H$, hence its derivative $d\alpha$ defines a linear functional on $\mf{h}$.}
\begin{align}\label{eq:Weyl-reflection}
	s_\alpha(\lambda) = \lambda - \lambda(H_\alpha)d\alpha ,
\end{align}
where $H_\alpha$ is the coroot associated to $\alpha$, i.e.\ the unique element of $[\mf{g}_\alpha,\mf{g}_{-\alpha}]$ satisfying $d\alpha(H_\alpha)=2$. Dualising this, we have that $s_\alpha\in W_G(H)$ acts on $\mf{h}$ via
\begin{align}\label{eq:dual-Weyl-reflection}
	s_\alpha(x) = x-d\alpha(x)H_\alpha.
\end{align}
Translating this via the exponential map into a question about fixed points on the maximal torus $H$, we say that a point $x\in\mf{h}$ is a fixed point of $s_\alpha$ if and only if $s_\alpha(x) \in x + X_\bullet(H)$, which, using our explicit description of $s_\alpha$, occurs if and only if $d\alpha(x)H_\alpha \in X_\bullet(H)$.

\begin{prop}\label{prop:eval-of-fixed-pt}
If $h\in H$ is fixed by the action of $s_\alpha$, then $\alpha(h)=\pm 1$.
\end{prop}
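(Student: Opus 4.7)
The plan is to translate the fixed-point condition from the torus $H$ to the Lie algebra $\mathfrak{h}$ via the exponential coordinates set up in \eqref{eq:H-from-lattice}, and then apply the linear functional $d\alpha$ to the resulting lattice condition. Concretely, I would write $h = \exp(x)$ for some $x\in\mathfrak{h}$, so that $\alpha(h) = e^{2\pi i\, d\alpha(x)}$, and note that $h$ is fixed by $s_\alpha$ precisely when $s_\alpha(x) - x \in X_\bullet(H)$. Using the explicit formula $s_\alpha(x) = x - d\alpha(x) H_\alpha$ from \eqref{eq:dual-Weyl-reflection}, this fixed-point condition becomes exactly
\begin{align*}
	d\alpha(x)\, H_\alpha \in X_\bullet(H).
\end{align*}

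Now I would apply the functional $d\alpha\colon \mathfrak{h}\to\mathbb{C}$ to this containment. Two facts are at play: first, because $\alpha$ is an actual character of $H$, its derivative $d\alpha$ takes integer values on the cocharacter lattice $X_\bullet(H)$; and second, the universal normalisation of roots and coroots gives $d\alpha(H_\alpha) = \langle \alpha, \alpha^\vee\rangle = 2$. Combining these, $d\alpha(x)\, H_\alpha \in X_\bullet(H)$ forces $2\, d\alpha(x) \in \mathbb{Z}$, i.e.\ $d\alpha(x) \in \tfrac12\mathbb{Z}$, and therefore $\alpha(h) = e^{2\pi i\, d\alpha(x)} = \pm 1$.

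I expect no real obstacle here — the content is essentially a bookkeeping exercise in the conventions \eqref{eq:H-from-lattice}--\eqref{eq:dual-Weyl-reflection}. The one thing to be careful about is the normalisation of the exponential (whether $\exp(x) = e^{x}$ or $e^{2\pi i x}$), since this determines the kernel of $\exp$ and hence the correct interpretation of ``$\equiv \pmod{X_\bullet(H)}$''. Once that is pinned down consistently with the discussion preceding the proposition, the proof reduces to the three-line computation above, and the identity $\langle \alpha,\alpha^\vee\rangle = 2$ is doing all the work: it is exactly what converts the half-integrality of $d\alpha(x)$ into the two possibilities $\alpha(h)=\pm 1$.
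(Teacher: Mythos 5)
Your proof is correct and follows essentially the same route as the paper: translate the fixed-point condition into $d\alpha(x)H_\alpha\in X_\bullet(H)$ via \eqref{eq:dual-Weyl-reflection} and \eqref{eq:H-from-lattice}, pair against $d\alpha$ using $\Lambda_R\subset X^\bullet(H)$ and $d\alpha(H_\alpha)=2$ to get $d\alpha(x)\in\tfrac12\mathbb{Z}$, and exponentiate. No substantive difference from the paper's argument.
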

\begin{proof}
Let $\Lambda_R$ denote the root lattice and $X^\bullet(G,H)=X^\bullet(H)$ the character lattice of $G$, both thought of as embedded in $\mf{h}^\ast$. We have
\begin{align}\label{eq:cochar-H}
	X_\bullet(H) = \{y\in\mf{h} \,|\, \lambda(y)\in\bZ \text{ for all } \lambda\in X^\bullet(H)\}.
\end{align}
Represent the fixed $h\in H$ by $x\in\mf{h}$. Since $\Lambda_R\subset X^\bullet(H)$ we have that $s_\alpha(x)\in x+ X_\bullet(H)$ implies $d\alpha(d\alpha(x)H_\alpha)\in\bZ$, equivalently $2d\alpha(x)\in\bZ$, and so $d\alpha(x)\in\frac{1}{2}\bZ$. But then for some $n\in\bZ$
\begin{align}\label{eq:root-on-fixed-pt}
	\alpha(t) = e^{2\pi i d\alpha(x)} = e^{\pi i n} \in \{\pm 1\}.
\end{align}
\end{proof}

Recall that if $G_1 \to G_2$ is an isogeny of simple groups inducing an isogeny on maximal tori $H_1\to H_2$, then $X_\bullet(G_1,H_1)\subset X_\bullet(G_2,H_2)$. This reflects the fact that if $x\in\mf{h}$ represents a fixed point of $s_\alpha$ acting on $H_1\subset G_1$, then it also represents a fixed point of $s_\alpha$ acting on $H_2 \subset G_2$. This is not a deep fact: the isogeny is $W$-equivariant, where $W\equiv W_{G_1}(H_1)=W_{G_2}(H_2)$, since it corresponds to the quotient by a central subgroup and the Weyl group action is induced by conjugation. More interesting is the question of when a fixed element $h_2\in H_2^{s_\alpha}$ can be \emph{lifted} to a fixed element $h_1\in H_1^{s_\alpha}$. It turns out that we can give a simple and exact answer to this question when the group we wish to lift to is the simply-connected form of the group.

\begin{prop}\label{thm:fixed-lifts}
Let $\tilde{G}$ be a simple, connected, simply-connected complex algebraic group, and let $\tilde{G}\to G$ be an isogeny of simple groups. Choose a maximal torus $\widetilde{H}\subset\widetilde{G}$ and denote by $H$ the corresponding maximal torus in $G$. Suppose that $h\in H$ is fixed by the root reflection $s_\alpha\in W$. Then a preimage $\tilde{h}\in \widetilde{H}$ of $h$ is fixed by $s_\alpha$ if and only if $\alpha(h)=1$.
\end{prop}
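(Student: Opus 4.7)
The plan is to imitate the proof of Proposition \ref{prop:eval-of-fixed-pt}, but now working inside the simply-connected group $\widetilde{G}$, where the cocharacter lattice is as small as possible. Let $\tilde{h}\in\widetilde{H}$ be any lift of $h$, and choose $x\in\mf{h}$ with $\exp(x)=\tilde{h}$. Using the explicit formula \eqref{eq:dual-Weyl-reflection} for the Weyl reflection on $\mf{h}$, one computes
\begin{align*}
	s_\alpha(\tilde{h}) = \exp(s_\alpha(x)) = \exp(x - d\alpha(x)H_\alpha) = \tilde{h}\cdot\exp(-d\alpha(x)H_\alpha),
\end{align*}
so $\tilde{h}$ is fixed by $s_\alpha$ if and only if $d\alpha(x)H_\alpha \in X_\bullet(\widetilde{G},\widetilde{H})$.

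The key point is that, because $\widetilde{G}$ is simply-connected, $X_\bullet(\widetilde{G},\widetilde{H})=\Pi_R$ is exactly the coroot lattice, and (for $G$ simple) the coroot $H_\alpha$ is primitive in $\Pi_R$. Hence a rational multiple $cH_\alpha$ lies in $\Pi_R$ if and only if $c\in\mb{Z}$. Applying this with $c=d\alpha(x)$ (which by the proof of Proposition \ref{prop:eval-of-fixed-pt} a priori lies in $\tfrac{1}{2}\mb{Z}$ whenever $h$ is $s_\alpha$-fixed), we get that $\tilde{h}$ is fixed by $s_\alpha$ if and only if $d\alpha(x)\in\mb{Z}$, i.e.\ if and only if
\begin{align*}
	\alpha(\tilde{h}) = e^{2\pi i d\alpha(x)} = 1.
\end{align*}

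To finish, I would use that the roots of $\widetilde{G}$ are trivial on the center $Z(\widetilde{G})$ (the adjoint representation factors through $G_{\ad}$), so in particular $\alpha:\widetilde{H}\to\mb{C}^\times$ descends to the quotient $H=\widetilde{H}/\ker(\widetilde{G}\to G)$ and satisfies $\alpha(\tilde{h})=\alpha(h)$. Combining this with the previous paragraph gives the desired equivalence: $\tilde{h}$ is $s_\alpha$-fixed iff $\alpha(h)=1$.

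There is really no serious obstacle here — the whole argument is a careful unpacking of what ``fixed in $\widetilde{H}$'' versus ``fixed in $H$'' means at the level of the Lie algebra modulo the appropriate cocharacter lattice. The only subtle step is the primitivity of the coroot $H_\alpha$ inside $\Pi_R$, which is where the simplicity of $\widetilde{G}$ gets used; if one dropped simplicity and allowed $\widetilde{G}$ to be only semisimple, the same proof would still work because each coroot remains primitive in the coroot lattice of its simple factor.
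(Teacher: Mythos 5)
Your proof is correct and follows essentially the same route as the paper: reduce the fixed-point conditions in $H$ and $\widetilde{H}$ to the lattice statement that $d\alpha(x)H_\alpha$ lies in the relevant cocharacter lattice, and then use that $X_\bullet(\widetilde{G},\widetilde{H})$ is exactly the coroot lattice. You merely make explicit two details the paper leaves implicit -- the primitivity of $H_\alpha$ in the coroot lattice (via pairing with a fundamental weight, or $W$-conjugation to a simple coroot) and the fact that $\alpha$ kills the central kernel so $\alpha(\tilde{h})=\alpha(h)$ -- both of which are fine.
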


\begin{proof}
We first translate this into a statement about lattices and integrality: specifically the claim of the theorem is equivalent to the claim that for any element $x\in\mf{h}$ representing $h$, $d\alpha(x)\in\bZ$ if and only if $d\alpha(x)H_\alpha\in X_\bullet(\widetilde{G},\widetilde{H})$. In this form, the theorem follows from the fact that the cocharacter lattice for the simply connected form of the group is exactly the coroot lattice (i.e.\ the integral span of the coroots).
\end{proof}

\begin{remark}\label{rk:fixed-lifts-semisimple}
By considering products of simple groups and their Weyl groups, Proposition \ref{thm:fixed-lifts} immediately extends to all semi-simple complex algebraic groups.
\end{remark}

\begin{eg}\label{eg:Winvt-sl2}
Consider the groups $SL_2\bC$ and $PGL_2\bC$, with a simultaneous choice of Cartan subalgebra $\mf{h}=\{2\times 2 \text{ traceless complex matrices}\}$. Let $h=\left(\begin{array}{cc}1 & 0 \\0 & -1\end{array}\right)$, and consider the character
\begin{align}\label{eq:std-sl2-root}
	d\alpha:\mf{h}\to\bC	\\
	d\alpha\left(a\cdot h\right) = 2a \nonumber
\end{align}
Then the root, weight, and character lattices are given by
\begin{align}\label{eq:sl2-chars}
	\Lambda_R  = \bZ\cdot d\alpha = X^\bullet(PGL_2,H_{\ad})	\\
	\Lambda_W = \frac{1}{2}\bZ\cdot d\alpha = X^\bullet(SL_2,H) \nonumber
\end{align}
and the coroot, coweight, and cocharacter lattices are
\begin{align}\label{eq:sl2-cochars}
	\Pi_R = \bZ\cdot h = X_\bullet(SL_2,H)	\\
	\Pi_W = \frac{1}{2}\bZ\cdot h = X_\bullet(PGL_2,H) \nonumber
\end{align}
The Weyl group in this case is of order 2, with non-trivial element acting on $\mf{h}$ by $s_\alpha(x) = -x$, so that $x$ exponentiates to a fixed point in $G$ if and only if $2x\in X_\bullet(G,H)$. For $G=SL_2\bC$ this translates to $d\alpha(x)\in\bZ$, which upon exponentiating gives
	\[	\left(\begin{array}{cc} \pm 1 & 0 \\ 0 & \pm 1\end{array} \right).	\]
For $G=PGL_2\bC$ this translates to $d\alpha(x)\in\frac{1}{2}\bZ$, which upon exponentiating gives a new non-trivial fixed element given by the equivalence class of
	\[	\left(\begin{array}{cc} i & 0 \\ 0 &  -i \end{array} \right).	\]
\end{eg}

\label{f}


\section{Structure results for $\widetilde{G}_\tau$}\label{s:structure-results}

In this appendix I record some results on the structure of the reductive algebraic group $\widetilde{G}_\tau$, which was used in Section \ref{s:stack-dual} to construct the moduli stack $\mc{M}_{\widetilde{G}}(X)$.


\subsection{The Langlands dual of the map $\tau$}\label{ss:dualmap}

Consider the exact sequence of complex algebraic groups
\begin{align}\label{eq:G-tau-SES}
	1\to Z(\widetilde{G})\to \widetilde{G}\times T \to \widetilde{G}_\tau \to 1 .
\end{align}

\begin{prop}\label{p:dual-ses}
There is a dual exact sequence
\begin{align}\label{eq:G-tau-dual-SES}
	1\to Z(\widetilde{^LG})\to \phantom{.}^L(\widetilde{G}_\tau)\to (^LG)_{\ad}\times{^LT} \to 1 .
\end{align}
\end{prop}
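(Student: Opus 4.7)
The plan is to verify the claim at the level of root data of maximal tori and then upgrade to an exact sequence of reductive groups. First, I would restrict the original sequence \eqref{eq:G-tau-SES} to maximal tori, obtaining
\begin{align*}
    1 \to Z(\widetilde{G}) \to \widetilde{H} \times T \to H_\tau \to 1,
\end{align*}
and apply the character functor $X^\bullet = \Hom(-,\mb{G}_m)$, which is exact on tori with finite central kernel (the relevant $\Ext^1$ vanishes). This produces the lattice sequence $0 \to X^\bullet(H_\tau) \to X^\bullet(\widetilde{H} \times T) \to Z(\widetilde{G})^\vee \to 0$.

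Next, I would invoke Langlands duality at the lattice level. Since it swaps character and cocharacter lattices, we have $X^\bullet(H_\tau) = X_\bullet({^LH_\tau})$ and $X^\bullet(\widetilde{H} \times T) = X_\bullet(({^LH})_{\ad} \times {^LT})$, using $^L\widetilde{G} = ({^LG})_{\ad}$ and the Langlands self-duality of the torus $T$. Combined with the standard identification $Z(\widetilde{G})^\vee \cong Z(\widetilde{^LG})$ coming from the perfect pairing on the coweight-mod-coroot quotient (cf.\ Remark \ref{rk:centre-of-group}), the sequence becomes
\begin{align*}
    0 \to X_\bullet({^LH_\tau}) \to X_\bullet(({^LH})_{\ad} \times {^LT}) \to Z(\widetilde{^LG}) \to 0,
\end{align*}
which corresponds under the equivalence between tori and cocharacter lattices to the isogeny of tori
\begin{align*}
    1 \to Z(\widetilde{^LG}) \to {^LH_\tau} \to ({^LH})_{\ad} \times {^LT} \to 1.
\end{align*}

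Finally, I would extend this to a sequence of reductive groups. The roots of $^L(\widetilde{G}_\tau)$ are the coroots of $\widetilde{G}_\tau$, which agree with the coroots of $\widetilde{G}$ (the $T$-factor contributes no roots), and these are by definition the roots of $^LG$; each such root already descends to a character of $({^LH})_{\ad}$. Hence every root of $^L(\widetilde{G}_\tau)$ vanishes on the kernel $Z(\widetilde{^LG})$ of the torus-level isogeny, so that kernel is central in $^L(\widetilde{G}_\tau)$, and the torus surjection extends uniquely to a surjection of reductive groups with the desired central kernel.

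The main point requiring care is the identification $Z(\widetilde{G})^\vee \cong Z(\widetilde{^LG})$: although standard, it relies on the natural perfect pairing of centres induced by the coweight-mod-coroot description of the centre of a simply connected simple group.
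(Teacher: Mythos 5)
Your proposal is correct and takes essentially the same route as the paper: prove the statement at the level of maximal tori by applying the character functor to the sequence $1\to Z(\widetilde{G})\to\widetilde{H}\times T\to H_\tau\to 1$, use the identification $Z(\widetilde{G})^\vee\cong Z(\widetilde{^LG})$ to reread the lattice sequence as the cocharacter sequence of the dual tori, and then pass back to the group level. The only differences are cosmetic — the paper returns from lattices to tori by explicitly applying $-\otimes_{\mb{Z}}^{\mathbf{L}}\mb{C}^\times$ and identifying the $\Tor_1$ term, and is terser ("via the inclusions") about the final torus-to-group upgrade, which you spell out via centrality of the kernel detected by the roots.
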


\begin{proof}\label{proof:dual-ses}
Consider the exact sequence of \emph{abelian} groups
\begin{equation}\label{eq:tau-SES}
\begin{tikzcd}
	1 \ar[r] & Z(\widetilde{G})\ar[r,"\tau"] & T\ar[r] & T/Z(\widetilde{G})\ar[r] & 1 .
\end{tikzcd}
\end{equation}
Taking characters $\Hom(-,\mb{C}^\times)$ is a contravariant functor, and yields the exact sequence
\begin{align}\label{eq:character-SES}
	0\to X^\bullet(T/Z(\widetilde{G}))\to X^\bullet(T)\to Z(\widetilde{G})^\vee \to 0 ,
\end{align}
i.e.\
\begin{align}\label{eq:cocharacter-SES}
	0\to X_\bullet(\phantom{.}^L(T/Z(\widetilde{G})))\to X_\bullet(\phantom{.}^LT)\to Z(\widetilde{^LG}) \to 0 .
\end{align}
Apply $-\tens_{\bZ}^{\mathbf{L}}\mb{C}^\times$ and take homology to get the exact sequence
\begin{align}\label{eq:LT-tor-SES}
	1 \to\Tor_1^{\bZ}(Z(\widetilde{^LG}),\mb{C}^\times)\to\phantom{.}^L(T/Z(\widetilde{G}))\to\phantom{.}^LT\to 1 .
\end{align}
As an abelian group $\bC^\times \cong \mb{R}_{>0}^\times \times U(1) \cong \mb{R}\times U(1)$, and so $\Tor_1^{\bZ}(Z(\widetilde{^LG}),\mb{C}^\times)$ is canonically isomorphic to the torsion subgroup of $Z(\widetilde{^LG})$ (which is the entire group, since $Z(\widetilde{G})$ is torsion). Hence we have an exact sequence
\begin{equation}\label{eq:Ltau-SES}
\begin{tikzcd}
	1\ar[r] & Z(\widetilde{^LG})\ar[r,"^L\tau"] & ^L(T/Z(\widetilde{G}))\ar[r] & ^LT \ar[r] & 1 .
\end{tikzcd}
\end{equation}
Choose a maximal torus $\widetilde{H}\subset\widetilde{G}$. Via the above procedure the exact sequence
\begin{align}\label{eq:tau-Cartan-SES}
	1\to Z(\widetilde{G})\to \widetilde{H}\times T \to \frac{\widetilde{H}\times T}{Z(\widetilde{G})} \to 1
\end{align}
yields the exact sequence
\begin{align}\label{eq:Ltau-Cartan-SES}
	1\to Z(\widetilde{^LG})\to {^L\left(\frac{\widetilde{H}\times T}{Z(\widetilde{G})}\right)} \to \phantom{.}^L\widetilde{H}\times\phantom{.}^L T \to 1
\end{align}
and so via the inclusions $\widetilde{H}\subset\widetilde{G}$, $^L\left(\frac{\widetilde{H}\times T}{Z(\widetilde{G})}\right)\subset{^L(\widetilde{G}_\tau)}$, the exact sequence \eqref{eq:G-tau-SES} induces the dual exact sequence \eqref{eq:G-tau-dual-SES}.
\end{proof}


\subsection{Structure of the Langlands dual group}\label{ss:dualstructure}
There is another inclusion
\begin{equation}\label{eq:1xtau}
\begin{tikzcd}[row sep = tiny]
	Z(\widetilde{G})\ar[r,"1\times\tau"]	& \widetilde{G}\times T 	\\
	z \ar[r,mapsto]							& (1_{\widetilde G},\tau(z))
\end{tikzcd}
\end{equation}
which induces an exact sequence
\begin{equation}\label{eq:G-tau-proj-SES}
\begin{tikzcd}
	1\ar[r]
		& Z(\widetilde{G})\ar[r,"\overline{1\times\tau}"]
			& \widetilde{G}_\tau \ar[r] & G_{\ad}\times (T/Z(\widetilde{G}))\ar[r]
				& 1 .
\end{tikzcd}
\end{equation}
\begin{prop}\label{prop:Langlands-dual-SES}
The Langlands dual exact sequence is given by
\begin{equation}\label{eq:LG-tau-SES}
\begin{tikzcd}
	1\ar[r]
		& Z(\widetilde{^LG})\ar[r,"{^L\iota}\times{^L\tau}"]
			& \widetilde{^LG}\times{^L(T/Z(\widetilde{G}))}\ar[r]
				& ^L(\widetilde{G}_\tau) \ar[r] & 1
\end{tikzcd}
\end{equation}
where $\iota:Z(\widetilde{G})\subset\widetilde{G}$ and $^L\iota:Z(\widetilde{^LG})\subset\widetilde{^LG}$ are the subgroup inclusions, and $^L\tau$ is the map embedding of Proposition \ref{p:dual-ses}. I.e.\ the Langlands dual of $\widetilde{G}_\tau$ is
\begin{align}\label{eq:LG-tau-is-G-Ltau}
	^L(\widetilde{G}_\tau)\cong\frac{\widetilde{^LG}\times{^L(T/Z(\widetilde{G}))}}{Z(\widetilde{^LG})} = (\widetilde{^LG})_{^L\tau} .
\end{align}
\end{prop}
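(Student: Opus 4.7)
My plan is to apply the Langlands duality procedure developed in Proposition~\ref{p:dual-ses} directly to the short exact sequence~\eqref{eq:G-tau-proj-SES}. The character/cocharacter lattice analysis that dualises \eqref{eq:G-tau-SES} into \eqref{eq:G-tau-dual-SES} extends verbatim to any short exact sequence of complex reductive algebraic groups with finite central kernel: such a sequence $1\to K \to G_1 \to G_2 \to 1$ dualises to $1 \to K^\vee \to {^LG_2} \to {^LG_1} \to 1$, with the Pontryagin dual $K^\vee$ appearing via the same $\Tor_1^{\mathbb{Z}}(-,\mathbb{C}^\times)$ computation that produced~\eqref{eq:LT-tor-SES}.

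Applied to~\eqref{eq:G-tau-proj-SES}, this recipe will yield an exact sequence
\begin{equation*}
1 \to Z(\widetilde{G})^\vee \to {^L(G_{\ad} \times (T/Z(\widetilde{G})))} \to {^L(\widetilde{G}_\tau)} \to 1.
\end{equation*}
Identifying $Z(\widetilde{G})^\vee \cong Z(\widetilde{^LG})$ via the pairing of Remark~\ref{rk:centre-of-group}, together with the factorisation ${^L(G_{\ad} \times (T/Z(\widetilde{G})))} = \widetilde{^LG} \times {^L(T/Z(\widetilde{G}))}$, produces a sequence of the claimed form~\eqref{eq:LG-tau-SES}. The isomorphism~\eqref{eq:LG-tau-is-G-Ltau} then follows immediately by comparison with Definition~\ref{def:G-tau}.

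The remaining work is to verify that the embedding produced by this procedure genuinely coincides with ${^L\iota} \times {^L\tau}$. For this I would exploit the fact that $\overline{1 \times \tau}$ factors through the two natural morphisms $Z(\widetilde{G}) \xrightarrow{\tau} T \hookrightarrow \widetilde{G}_\tau$ and $Z(\widetilde{G}) \xrightarrow{\iota} \widetilde{G} \hookrightarrow \widetilde{G}_\tau$. Embedding~\eqref{eq:G-tau-proj-SES} into the commutative diagram
\begin{equation*}
\begin{tikzcd}[column sep = small]
1 \ar[r] & Z(\widetilde{G}) \ar[r, "\tau"] \ar[d, equals] & T \ar[r] \ar[d, hook] & T/Z(\widetilde{G}) \ar[r] \ar[d, hook] & 1 \\
1 \ar[r] & Z(\widetilde{G}) \ar[r, "\overline{1 \times \tau}"] & \widetilde{G}_\tau \ar[r] & G_{\ad} \times (T/Z(\widetilde{G})) \ar[r] & 1
\end{tikzcd}
\end{equation*}
and dualising, naturality will identify the ${^L(T/Z(\widetilde{G}))}$ component of the embedding as the map ${^L\tau}$ from~\eqref{eq:Ltau-SES}. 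A parallel argument, with the top row replaced by $1 \to Z(\widetilde{G}) \xrightarrow{\iota} \widetilde{G} \to G_{\ad} \to 1$, will pin down the $\widetilde{^LG}$ component as the tautological inclusion ${^L\iota}$.

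The main technical obstacle will be sign-consistency: a priori the duality procedure could identify the ${^L(T/Z(\widetilde{G}))}$-component with ${^L\tau}$, its inverse, or some Weyl twist thereof. Verifying that it is precisely ${^L\tau}$ requires tracking the induced maps on cocharacter lattices through the $\Tor$ construction, which is routine but is the only step where the conventions of~\eqref{eq:1xtau} and Proposition~\ref{p:dual-ses} genuinely interact.
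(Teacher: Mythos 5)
Your proposal is correct and follows essentially the same route as the paper: the paper's own proof reduces to maximal tori and runs the character-lattice/$\Tor_1^{\mb{Z}}(-,\mb{C}^\times)$ dualisation of Proposition \ref{p:dual-ses} on the sequence \eqref{eq:G-tau-proj-SES} (the middle row of its $3\times 3$ diagrams), then pins down the two components of the dual embedding by exactly the naturality comparisons you describe, via the $\tau$-ladder and the $\iota$-ladder (diagrams \eqref{diag:proj-Ltau} and \eqref{diag:proj-Lincl}). One small correction: the identification $Z(\widetilde{G})^\vee\cong Z(\widetilde{^LG})$ should be invoked as the canonical duality appearing in \eqref{eq:character-SES}--\eqref{eq:cocharacter-SES}, not via the pairing of Remark \ref{rk:centre-of-group}, which is only available in the simply-laced case while the proposition covers all semisimple $\widetilde{G}$.
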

\begin{proof}
It suffices to prove the result after replacing the group $\widetilde{G}$ with a choice of maximal torus $\widetilde{H}$. Consider the following commutative diagram, where all rows and columns are exact:
\begin{equation}\label{diag:tau-torus-square}
\begin{tikzcd}
		& 0\ar[d]
			& 1\ar[d]
				& 1\ar[d]
					&	\\
	1\ar[r]
		& Z(\widetilde{G})\ar[r,"\tau"]\ar[d,equals]
			& T\ar[r]\ar[d,"\overline{1\times\id}"]
				& T/Z(\widetilde{G})\ar[r]\ar[d,"1\times\id"]
					& 1 	\\
	1\ar[r]
		& Z(\widetilde{G})\ar[r,"\overline{1\times\tau}"]\ar[d]
			& \frac{\widetilde{H}\times T}{Z(\widetilde{G})}\ar[r]\ar[d]
				& H_{\ad}\times T/Z(\widetilde{G})\ar[r]\ar[d]
					& 1 	\\
		& 0\ar[r]
			& H_{\ad}\ar[r,equals]\ar[d]
				& H_{\ad}\ar[r]\ar[d]
					& 0 	\\
		&	& 1 & 1 &
\end{tikzcd}
\end{equation}
Applying $(-)^\vee:=\Hom(-,\mb{C}^\times)$ yields another commutative diagram, again with all rows and columns exact:
\begin{equation}\label{diag:tau-character-square}
\begin{tikzcd}
		& 0\ar[d]
			& 0\ar[d]
				&	&	\\
	0\ar[r]
		& X^\bullet(H_{\ad})\ar[r,equals]\ar[d]
			& X^\bullet(H_{\ad})\ar[r]\ar[d]
				& 0\ar[d] &	\\
	0\ar[r]
		& X^\bullet(H_{\ad})\times X^\bullet(T/Z(\widetilde{G}))\ar[r]\ar[d]
			& X^\bullet\left(\frac{\widetilde{H}\times T}{Z(\widetilde{G})}\right)\ar[d]\ar[r,"\overline{1\times\tau}^\vee"]
				& Z(\widetilde{G})^\vee \ar[r]\ar[d,equals]
					& 0 \\
	0\ar[r]
		& X^\bullet(T/Z(\widetilde{G}))\ar[r]\ar[d]
			& X^\bullet(T)\ar[r,"\tau^\vee"]\ar[d]
				& Z(\widetilde{G})^\vee \ar[r]\ar[d]
					& 0 \\
		& 0 & 0 & 0 &
\end{tikzcd}
\end{equation}
Applying $-\tens_{\bZ}^{\mathbf{L}}\mb{C}^\times$ and taking homology yields a third commutative diagram with all rows and columns exact:
\begin{equation}\label{diag:tau-tor-square}
\begin{tikzcd}
		&	& 1\ar[d]
				& 1\ar[d]
					&		\\
		& 0\ar[d]\ar[r]
			& \widetilde{^LH}\ar[r,equals]\ar[d]
				& \widetilde{^LH}\ar[r]\ar[d]
					& 0 	\\
	1\ar[r]
		& Z(\widetilde{^LG})\ar[d,equals]\ar[r,"^L\overline{1\times\tau}"]
			& \widetilde{^LH}\times{^L(T/Z(\widetilde{G}))}\ar[r]\ar[d]
				& ^L\left(\frac{\widetilde{H}\times T}{Z(\widetilde{G})}\right)\ar[r]\ar[d]
					& 1 	\\
	1\ar[r]
		& Z(\widetilde{^LG})\ar[r,"^L\tau"]\ar[d]
			& ^L(T/Z(\widetilde{G}))\ar[r]\ar[d]
				& ^LT\ar[r]\ar[d]
					& 1 	\\
		& 0 & 1 & 1 &
\end{tikzcd}
\end{equation}
Therefore, composing $^L\overline{1\times \tau}$ with projection to the second factor gives
\begin{equation}\label{diag:proj-Ltau}
\begin{tikzcd}
	Z(\widetilde{^LG})\ar[r,"^L\overline{1\times\tau}"]\ar[rd,"^L\tau"']
		& \widetilde{^LH}\times{^L(T/Z(\widetilde{G}))}\ar[d]	\\
		& ^L(T/Z(\widetilde{G}))
\end{tikzcd}
\end{equation}
Repeating this argument but with the central column in the first diagram given by
\begin{equation}\label{eq:Cartan-incl-SES}
\begin{tikzcd}
	1\ar[r] & \widetilde{H}\ar[r]
			& \frac{\widetilde{H}\times T}{Z(\widetilde{G})}\ar[r]
			& T/Z(\widetilde{G})\ar[r] & 1
\end{tikzcd}
\end{equation}
shows that composition with the first projection is
\begin{equation}\label{diag:proj-Lincl}
\begin{tikzcd}
	Z(\widetilde{^LG})\ar[r,"^L\overline{1\times\tau}"]\ar[rd,"^L\iota"']
		& \widetilde{^LH}\times{^L(T/Z(\widetilde{G}))}\ar[d]	\\
		& \widetilde{^LH}
\end{tikzcd}
\end{equation}
Therefore, $^L\overline{1\times\tau} = {^L\iota}\times{^L\tau}$.
\end{proof}

\label{g}


\section*{References}

\bibliography{higgs-dual-bib-adv}
\bibliographystyle{elsarticle-num}

\end{document}